\newcommand{\vertiii}[1]{{\left\vert\kern-0.25ex\left\vert\kern-0.25ex\left\vert #1 
    \right\vert\kern-0.25ex\right\vert\kern-0.25ex\right\vert}}
\theoremstyle{plain}
\newtheorem{proposition}{Proposition}[section]
\newtheorem{corollary}[proposition]{Corollary}
\newtheorem{lemma}[proposition]{Lemma}
\newtheorem{theorem}[proposition]{Theorem}
\theoremstyle{definition}
\newtheorem{definition}[proposition]{Definition}
\newtheorem{remark}[proposition]{Remark}
\newcommand{\A}{\mathcal{A}}
\newcommand{\B}{\mathscr{B}}
\newcommand{\BB}{\mathcal{B}}
\newcommand{\C}{{\mathbb C}}
\newcommand{\CC}{\mathcal{C}}
\newcommand{\D}{{\mathbb D}}
\newcommand{\DD}{\mathcal{D}}
\renewcommand{\H}{\mathcal{H}}
\newcommand{\N}{{\mathbb N}}
\newcommand{\T}{\mathbb{T}}
\renewcommand{\DD}{\widehat{\mathcal{D}}}
\newcommand{\Dd}{\check{\mathcal{D}}}
\newcommand{\M}{\mathcal{M}}
\newcommand{\DDD}{\mathcal{D}}
\def\om{\omega}
\def\W{{\mathcal W}}
\def\TT{{\mathcal T}}
\renewcommand{\M}{\mathcal{M}}
\newcommand{\Mult}{\operatorname{Mult}}
\newcommand{\spn}{\operatorname{span}}
\newcommand{\opnorm}{\@ifstar\@opnorms\@opnorm}
\newcommand{\@opnorms}[1]{%
  \left|\mkern-1.5mu\left|\mkern-1.5mu\left|
   #1
  \right|\mkern-1.5mu\right|\mkern-1.5mu\right|
}
\newcommand{\@opnorm}[2][]{%
  \mathopen{#1|\mkern-1.5mu#1|\mkern-1.5mu#1|}
  #2
  \mathclose{#1|\mkern-1.5mu#1|\mkern-1.5mu#1|}
}
\newcommand{\boldopnorm}{\@ifstar\@boldopnorms\@boldopnorm}
\newcommand{\@boldopnorms}[1]{%
  \pmb{\left|}\mkern-1.5mu\pmb{\left|}\mkern-1.5mu\pmb{\left|}
   #1
  \pmb{\right|}\mkern-1.5mu\pmb{\right|}\mkern-1.5mu\pmb{\right|}
}
\newcommand{\@boldopnorm}[2][]{%
  \mathopen{#1\pmb{|}\mkern-1.5mu#1\pmb{|}\mkern-1.5mu#1\pmb{|}}
  #2
  \mathclose{#1\pmb{|}\mkern-1.5mu#1\pmb{|}\mkern-1.5mu#1\pmb{|}}
}
\numberwithin{equation}{section}
\newtheorem{lettertheorem}{Theorem}
\theoremstyle{plain} % just in case the style had changed
\newcommand{\thistheoremname}{}
\newtheorem*{genericthm*}{\thistheoremname}
\newenvironment{namedthm*}[1]
{\renewcommand{\thistheoremname}{#1}%
	\begin{genericthm*}}
	{\end{genericthm*}}
\theoremstyle{definition} % just in case the style had changed
\newcommand{\thisdefinitionname}{}
\newtheorem*{genericdefinition*}{\thisdefinitionname}
\newenvironment{nameddefinition*}[1]
{\renewcommand{\thisdefinitionname}{#1}%
	\begin{genericdefinition*}}
	{\end{genericdefinition*}}
\begin{document}

\title
[Radicality property for spaces  of symbols]
{ On the radicality property for spaces  of symbols of  bounded Volterra  operators}

\author[C. Cascante]{Carme Cascante}
\address{C. Cascante: Departament de Matem\`atiques i
    Inform\`atica, Universitat  de Barcelona,
     Gran Via 585, 08071 Barcelona, Spain \&
Centre de Recerca Matem\`atica, Edifici C, Campus Bellaterra, 08193 Bellaterra, Spain}
\email{cascante@ub.edu}
\author[J. F\`abrega]{Joan F\`abrega}
\address{J. F\`abrega: Departament de Matem\`atiques i
    Inform\`atica, Universitat  de Barcelona,
     Gran Via 585, 08071 Barcelona, Spain}
\email{joan$_{-}$fabrega@ub.edu}
\author[D. Pascuas]{Daniel Pascuas}
\address{D. Pascuas: Departament de Matem\`atiques i
    Inform\`atica, Universitat  de Barcelona,
     Gran Via 585, 08071 Barcelona, Spain}
\email{daniel$_{-}$pascuas@ub.edu}
\author[J. A. Pel\'aez]{Jos\'e \'Angel Pel\'aez}
\address{J. A. Pel\'aez: Departamento de An\'alisis Matem\'atico, Universidad de M\'alaga, Campus de
Teatinos, 29071 M\'alaga, Spain} \email{japelaez@uma.es}

\thanks{
The research of the first, second, and third authors
	was supported in part by
        Ministerio de Ciencia e Innovaci\'{o}n, 
        Spain, project  PID2021-123405NB-I00.  The first author is also supported by the Spanish State Research Agency, through the Severo Ochoa and Mar\'{\i}a de Maeztu Program for Centers and Units of Excellence in R\&D 
(CEX2020-001084-M), and by Departament de Recerca i Universitats  grant SGR 2021-00087.
The research of the fourth author was supported in part by Ministerio de Ciencia e Innovaci\'on, Spain, project  PID2022-136619NB-I00; La Junta de Andaluc{\'i}a,
project FQM210.}
\date{\today}

\subjclass[2020]{30H10, 30H20, 47G10, 30H35, 30H30}

\keywords{Volterra-type operator, Analytic paraproduct, iterated composition of operators, 
 weighted Bergman spaces, Bloch space}

\date{\today}

\begin{abstract} 
In \cite{Aleman:Cascante:Fabrega:Pascuas:Pelaez} it is shown that the Bloch space $\mathcal{B}$ in the unit disc has the following 
radicality property: if an analytic function $g$ satisfies that $g^n\in \mathcal{B}$, then $g^m\in \mathcal{B}$, for all $m\le n$. Since $\mathcal{B}$ coincides with the space  $\mathcal{T}(A^p_\alpha)$  of analytic symbols $g$  such that the Volterra-type operator  
$T_gf(z)= \int_0^z f(\zeta)g'(\zeta)\,d\zeta$
 is bounded on the classical weighted Bergman space $A^p_\alpha$, the radicality property was used to study the composition of paraproducts $T_g$ and $S_gf=T_fg$ on $A^p_{\alpha}$. Motivated by this fact, we prove that $\mathcal{T}(A^p_\omega)$ also has the radicality property, for any radial weight $\omega$. Unlike the classical case, 
the lack of a precise description of $\mathcal{T}(A^p_\omega)$ for a general radial weight, induces us to prove the radicality property for $A^p_\omega$ from precise norm-operator results for compositions of analytic paraproducts.

%A space $X$ of analytic functions in the unit disc $\mathbb{D}$ of the complex plane
%$\mathbb{C}$  satisfies the {\em radicality property}
%if for any analytic function $g$ in $\mathbb{D}$  and $n\in\mathbb{N}$ such that $g^n\in X$, then $g^m \in X$ for all $m\in \mathbb{N}$ such that $m<n$.  
%We consider
%the space $\mathcal{T}(A^p_\omega)$  of analytic symbols $g$  such that the Volterra-type operator  $$T_gf(z)= \int_0^z f(\zeta)g'(\zeta)\,d\zeta$$ is bounded on the weighted Bergman space $A^p_\omega$. 
%For standard weights, $\mathcal{T}(A^p_\omega)$ coincides with  the classical Bloch space $\B$, and it has recently proved in \cite{Aleman:Cascante:Fabrega:Pascuas:Pelaez} that $\B$ has the radicality property.
%Motivated by this result and using techniques coming from the theory of composition of analytic paraproducts, 
%we prove that, for any radial weight $\omega$ and any $0<p<\infty$,   $\mathcal{T}(A^p_\omega)$ satisfies the radicality property.
\end{abstract}
\maketitle

\noindent
\section{Introduction} 

Let $\H(\D)$ denote the algebra of all analytic functions in the unit disc $\D$ of the complex plane $\mathbb{C}$. A function $\omega:\D\to [0,\infty)$, integrable over $\D$, is called a {\em weight}. For $0<p<\infty$ and a weight $\omega$, the weighted Bergman
space $A^p_\omega$ consists of those $f\in\H(\D)$ for which
    $$
    \|f\|_{A^p_\omega}^p=\int_\D|f(z)|^p\omega(z)\,dA(z)<\infty,\index{$\Vert\cdot\Vert_{A^p_\omega}$}
    $$
where $dA(z)=\frac{dx\,dy}{\pi}$ is the normalized Lebesgue area measure on $\D$.
A weight is {\em radial} if $\omega(z)=\omega(|z|)$, for all $z\in\D$,  $\int_0^1\om(s)\,ds<\infty$ and 
$\widehat{\omega}(r)=\int_r^1 \omega(s)\,ds>0$, for any $r\in [0,1)$. If the last hypothesis does not hold $A^p_\omega=\H(\D)$.
 As usual,  we write $A^p_\alpha$ for the Bergman space induced by the  standard weight 
$\omega(z)=(\alpha+1)(1-|z|^2)^\alpha$, $\alpha>-1$.  
Throughout the manuscript  the space of bounded  linear operators on $A^p_\omega$ is denoted by $\BB(A_\omega^p)$, and for  any linear map  $L:\H(\D)\to\H(\D)$ we write
$\|L\|_{A_\omega^p}:=\sup \{\|Lf\|_{A_\omega^p}:\|f\|_{A_\omega^p}=1\}$. We refer to this quantity as the operator norm of $L$ on $A^p_\omega$, despite $A^p_\omega$ is not a normed space for $0 <p <1$.

For any $g\in\H(\D)$, we consider the {\em Volterra-type operator} 
\[
T_gf(z):= \int_0^z f(\zeta)g'(\zeta)\,d\zeta \qquad(f\in\H(\D),\,z\in\D).
\]  
In this paper we are interested in the spaces of analytic functions 
\[
\TT(A^p_\omega):=\{ g\in \H(\D): T_g \in  \BB(A_\omega^p) \}\mbox{ with the seminorm } \|g\|_{\TT(A^p_\omega)}:= \|T_g\|_{A_\omega^p}.
\] 
It is well-known   that $\TT(A^p_\alpha)=\B$, the Bloch space, and recently
the  conformally invariance of the Garsia's seminorm $\vertiii{g}_{\B}:=\sup_{a\in\D}\|g\circ\phi_a-g(a)\|_{A^2}$,  $\phi_a(z):=\frac{a-z}{1-\overline{a}z}$, has been strongly used to prove   the following meaningful property of the Bloch space \cite[Section 2]{Aleman:Cascante:Fabrega:Pascuas:Pelaez}.
\begin{lettertheorem}\label{th:powerBloch}
Let $m,n\in\N$, $m<n$, and $g\in\H(\D)$. If $g^n\in\B$, then $g^m\in\B$ and 
	\begin{equation*}
	\vertiii{g^m}_{\B}^{1/m}\le\vertiii{g^n}_{\B}^{1/n}.
	\end{equation*}
\end{lettertheorem}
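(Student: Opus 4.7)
\textbf{Reduction to a pointwise statement in $A^{2}$.} My plan is to use the conformal invariance of $\vertiii{\cdot}_{\B}$ in order to turn the inequality into a pointwise statement about a single analytic function. For each $a\in\D$, set $G:=g\circ\phi_{a}\in\H(\D)$; since $\phi_{a}(0)=a$, we have $G(0)=g(a)$, and hence $(g\circ\phi_{a})^{k}-g(a)^{k}=G^{k}-G(0)^{k}$. Taking $\sup_{a}$ in the definition of $\vertiii{g^{k}}_{\B}$, it is thus enough to prove the following $a$-free inequality: for every $G\in\H(\D)$ with $G^{n}\in A^{2}$,
\[
\|G^{m}-G(0)^{m}\|_{A^{2}}^{1/m}\,\le\,\|G^{n}-G(0)^{n}\|_{A^{2}}^{1/n}.
\]

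\textbf{Algebraic simplification via the Bergman mean value.} The next step is the identity $\int_{\D}F\,dA=F(0)$, which holds for any $F\in\H(\D)\cap L^{1}(\D,dA)$ (the Bergman reproducing formula at the origin, since the Bergman kernel $K(\cdot,0)\equiv 1$). Applied to $F=G^{k}$ this gives $\int_{\D}G^{k}\,dA=G(0)^{k}$, and then expanding $\|G^{k}-G(0)^{k}\|_{A^{2}}^{2}$ the cross-terms collapse to produce the clean identity
\[
\|G^{k}-G(0)^{k}\|_{A^{2}}^{2}\,=\,\|G\|_{A^{2k}}^{2k}-|G(0)|^{2k}.
\]
Writing $x:=\|G\|_{A^{2m}}^{2m}$, $y:=\|G\|_{A^{2n}}^{2n}$ and $C:=|G(0)|^{2m}$, the pointwise statement becomes the purely scalar inequality $(x-C)^{n}\le(y-C^{n/m})^{m}$, with $C\le x$ by Jensen.

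\textbf{Two elementary ingredients, and the crux.} To conclude I would combine two classical facts. First, since $dA$ is a probability measure on $\D$, Lyapunov's inequality yields $\|G\|_{A^{2m}}\le\|G\|_{A^{2n}}$, equivalently $x^{n/m}\le y$. Second, for $\alpha\ge 1$ and $0\le C\le t$ the real-variable inequality $t^{\alpha}-C^{\alpha}\ge(t-C)^{\alpha}$ holds by a one-line monotonicity check (differentiate in $t$). Applying this with $\alpha=n/m$ gives $x^{n/m}-C^{n/m}\ge(x-C)^{n/m}$, and combining with $y\ge x^{n/m}$ yields $y-C^{n/m}\ge(x-C)^{n/m}$; raising both sides to the $m$-th power is exactly the required scalar inequality. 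I expect the main obstacle to be Step~2: although short, the reproducing-formula identity is what pins down $g(a)^{k}$ as the exactly correct constant to subtract, and a naive approach via the binomial expansion $(c+h)^{k}-c^{k}=\sum_{j=1}^{k}\binom{k}{j}c^{k-j}h^{j}$ founders on non-orthogonal cross-terms in $A^{2}$ that prevent a clean bound.
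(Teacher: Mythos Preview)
Your argument is correct. Note, however, that the present paper does \emph{not} prove Theorem~A: it is quoted from \cite[Section~2]{Aleman:Cascante:Fabrega:Pascuas:Pelaez}, and the only hint the paper gives about that proof is that ``the conformal invariance of the Garsia seminorm $\vertiii{\cdot}_{\B}$ has been strongly used.'' Your reduction via $G=g\circ\phi_a$ is precisely that conformal-invariance step, so your approach is consistent with what the paper indicates about the original argument.

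All three stages check out. In Step~1 the pointwise inequality for each fixed $a$ indeed implies the inequality of suprema. In Step~2 the hypothesis $g^n\in\B$ gives $G^n-G(0)^n\in A^2$, hence $G\in A^{2n}\subset A^{2k}$ for every $k\le n$, so both the reproducing identity $\int_{\D}G^{k}\,dA=G(0)^{k}$ and the expansion
\[
\|G^{k}-G(0)^{k}\|_{A^{2}}^{2}=\|G\|_{A^{2k}}^{2k}-|G(0)|^{2k}
\]
are legitimate for $k=m$ and $k=n$. In Step~3 the scalar inequality $(t-C)^{\alpha}\le t^{\alpha}-C^{\alpha}$ for $\alpha\ge 1$, $0\le C\le t$, combined with $x^{n/m}\le y$ from the monotonicity of $L^{p}$-norms on the probability space $(\D,dA)$, finishes the job. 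The observation that $C\le x$ follows from $|G(0)|\le\|G\|_{A^{1}}\le\|G\|_{A^{2m}}$ as you say. Finally, the conclusion $g^{m}\in\B$ is implicit in the finiteness of $\vertiii{g^{m}}_{\B}$, since the Garsia seminorm is equivalent to the standard Bloch seminorm.
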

It is worth noticing that Theorem~\ref{th:powerBloch} is a pivotal result within the  theory of composition of analytic paraproducts on classical Bergman and Hardy spaces \cite{Aleman:Cascante:Fabrega:Pascuas:Pelaez,Aleman:Cascante:Fabrega:Pascuas:Pelaez2}. Let us recall the reader that for any $g\in\H(\D)$,  besides $T_g$, the operators  
\begin{equation*}
M_gf:= fg
\quad\mbox{ and }\quad 
S_gf(z):= \int_0^z f'(\zeta)g(\zeta)\,d\zeta
\end{equation*} are called  
 {\em $g$-analytic paraproducts}.

Theorem~\ref{th:powerBloch}  leads us to introduce the following concept: A space $X$ of analytic functions in $\D$  has the  {\em radicality property}
if for any $g\in \H(\D)$ and $n\in\N$ such that $g^n\in X$, then $g^m \in X$ for all $m\in \N$ such that $m<n$. This definition is inspired by the ideal theory in Commutative Algebra. 
Consequently,   $\TT(A^p_\alpha)=\B$ satisfies the radicality property  and  the next natural question arises:

\begin{quote}
 {\em Given $0<p<\infty$, which are the weights such that $\TT(A^p_\omega)$ has the radicality property?}
\end{quote}

Of course, by Theorem \ref{th:powerBloch} the answer is obvious for any radial weight $\omega$ for which $\TT(A^p_\omega)=\B$. We remark that, besides standard weights, Bekoll\'e-Bonami weights and 
radial doubling weights \cite{AlCo,PelSum14} satisfy that  $\TT(A^p_\omega)=\B$, for any $p\in (0,\infty)$.
 In general, the situation is much more difficult because the  existing literature does not provide a description of    $\TT(A^p_\omega)$, even in the case when $\omega$ is radial. In addition,  it is worth recalling the existence of classes of weights $\omega$ such that a handy description   of    $\TT(A^p_\omega)$ is known but
 $\TT(A^p_\omega)$ is not conformally invariant, so despite having a characterization of  $\TT(A^p_\omega)$
tackling the question above may require different techniques to those employed in the proof of Theorem~\ref{th:powerBloch}.  For instance,  this happens  if $\omega$ belongs to
the class of rapidly decreasing weights $\W$  and may happen if $\omega$ belongs to the class $\DD$
of all radial weights $v$ such that $\sup_{0\le r<1}\frac{\widehat{v}(r)}{\widehat{v}(\frac{1+r}{2})}<\infty$. Operator theory on weighted Bergman spaces $A^p_\omega$ induced by weights 
in $\DD$ or $\W$  has attracted a lot of attention in the last decade,
see  Section \ref{S:radialweights} below  for the definition of the class $\W$ and further details about a description of  $\TT(A^p_\omega)$ when $\om\in\W$ or $\om\in\DD$.
Our main result is the following.

\begin{theorem}\label{cor:compoq} Let    $\omega$ be a radial weight and $0<p<\infty$. Then  $\TT(A^p_\omega)$ satisfies the radicality property.  Moreover,
if $m,n\in\N$, $m<n$, then 
\[
\|g^m\|^{\frac1m}_{\TT(A^p_\omega)}\lesssim\|g^n\|^{\frac1n}_{\TT(A^p_\omega)}
\qquad(g\in\H(\D)).
\]
\end{theorem}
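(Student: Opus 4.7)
The plan is to deduce the radicality property from two-sided norm estimates comparing $T_{g^n}$ with iterated compositions of the analytic paraproducts $T_g$, $S_g$ and the multiplication $M_g$ on $A^p_\omega$. The algebraic engine comes from three elementary identities on $\H(\D)$: the product rule $fg=f(0)g(0)+T_gf+S_gf$, which splits $M_g=T_g+S_g+\Lambda_g$ with $\Lambda_g f:=f(0)g(0)$ a rank-one operator; the commutator identity $T_{g_1}T_{g_2}=M_{g_1}T_{g_2}-T_{g_2}M_{g_1}$ obtained by integration by parts; and the factorization
\[
T_{g^n}=\tfrac{n}{m}\,T_{g^m}M_{g^{n-m}}\qquad(1\le m\le n),
\]
together with the multiplicativity $S_g^n=S_{g^n}$, which follows by induction from $(S_g h)'=h'g$. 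Iterating these relations expresses $T_{g^n}$ as a polynomial combination of compositions $T_{g^{k}}\circ S_{g^{j}}$ with $k+j\le n$, modulo a finite-rank perturbation whose coefficients depend only on the values $g(0),\ldots,g^{n-1}(0)$.

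The heart of the argument is to upgrade these algebraic identities into quantitative operator bounds on $A^p_\omega$, producing a two-sided norm equivalence of the form $\|T_{g^n}\|_{A^p_\omega}\asymp_{n,p,\omega}\Phi(g)^n$ for an intrinsic functional $\Phi$ of $g$ that is multiplicative in powers (this being the content of the "precise norm-operator results for compositions of analytic paraproducts" alluded to in the abstract). Once such a $\Phi$ is in hand, applying the equivalence first to $g^n$ and then to $g^m$ yields
\[
\|g^m\|_{\TT(A^p_\omega)}^{1/m}\;\lesssim_{n,m,p,\omega}\;\Phi(g)\;\lesssim_{n,m,p,\omega}\;\|g^n\|_{\TT(A^p_\omega)}^{1/n},
\]
which is precisely the desired quantitative radicality estimate, and hence the radicality property itself.

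The main obstacle lies in establishing this operator-norm equivalence without an intrinsic description of $\TT(A^p_\omega)$---unavailable for a general radial $\omega$, in contrast to the classical case where $\TT(A^p_\alpha)=\B$ and Theorem~\ref{th:powerBloch} supplies a conformally invariant Bloch functional. The delicate point is that $S_g$ itself is bounded on $A^p_\omega$ only under hypotheses on $g$ far more restrictive than $g\in\TT(A^p_\omega)$, so the individual cross-terms in the expansion of $T_{g^n}$ need not be bounded. The rescue is that the $S$-type factors appear only in the specific combinations prescribed by the identities above; these combinations can be reorganized via the commutator identity and the factorization so that their action on $A^p_\omega$ is controlled by $\|T_{g^n}\|_{A^p_\omega}$ once one tests against suitable normalised reproducing kernels of $A^p_\omega$ (with an atomic decomposition required when $0<p<1$). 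The bulk of the technical work will consist of carrying out this reorganization systematically and verifying the resulting two-sided estimates uniformly for every pair $m<n$.
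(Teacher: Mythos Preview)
Your central step---the existence of a functional $\Phi$ with a two-sided equivalence $\|T_{g^n}\|_{A^p_\omega}\asymp_{n}\Phi(g)^n$---is false, and this is not a technical difficulty but a genuine obstruction. Take any standard weight and $g(z)=\log\frac{1}{1-z}$: then $g\in\B=\TT(A^p_\alpha)$, so $\|T_g\|<\infty$ would force $\Phi(g)<\infty$; but $(g^2)'(z)=\frac{2}{1-z}\log\frac{1}{1-z}$ shows $g^2\notin\B$, so $\|T_{g^2}\|=\infty$, which is incompatible with $\|T_{g^2}\|\asymp\Phi(g)^2<\infty$. The radicality inequality is intrinsically one-sided: $\|T_{g^m}\|^{1/m}\lesssim\|T_{g^n}\|^{1/n}$ for $m<n$, with no converse, so any strategy built on a symmetric equivalence cannot succeed. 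Your appeal to ``the precise norm-operator results alluded to in the abstract'' as supplying $\Phi$ is also circular, since those results are exactly the one-sided bounds of Theorem~\ref{thm:compoq} that constitute the proof.

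What the paper actually does is closer in spirit to part of your outline but differs in execution. The starting identity is $T_{g^n}=nS_g^{n-1}T_g$, placing $T_{g^n}$ in the class~\eqref{eq:opLLL} with parameters $(m,n)=(n-1,1)$. Theorem~\ref{thm:compoq}\,\ref{item:compoq3} then gives directly $\|S_g^{m-1}T_g\|\lesssim\|S_g^{n-1}T_g\|^{m/n}$, which is the claim. The proof of that part uses none of the tools you propose (reproducing kernels, atomic decompositions): for a general radial weight the kernels need not have usable pointwise estimates. Instead one (i)~replaces $g$ by its dilations $g_r$, for which every paraproduct is bounded, via Proposition~\ref{prop:norm:dilations}; (ii)~substitutes a Calder\'on tent-space formula (Proposition~\ref{prop:Calderon:formula:tent:spaces}) for the Littlewood--Paley identity, which can fail for radial weights when $p\ne2$; (iii)~uses iterated commutators $[L_g,T_g]_m=m!\,T_g^{2m+n}$ on $\H_0(\D)$ to extract the bound $\|T_g\|^{m+n}\lesssim\|L_g\|$; and (iv)~runs a complete induction on $k$ via the specific factorization of Corollary~\ref{cor:repreT}, which rewrites $L_g^kT_g^{m-nk}$ as $(S_g^kT_g)^m$ plus words built only from $S_g^jT_g$ with $j<k$. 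The ``reorganization of cross-terms'' you mention is precisely this last step, but it requires the explicit combinatorics of Corollary~\ref{cor:repreT}, not a generic commutator identity.
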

As usual,  $A\lesssim B$ ($B\gtrsim A$) for nonnegative functions $A$, $B$ means that $A\le C\,B$, for some positive constant $C$
 independent of the variables involved. Furthermore, we write $A\simeq B$ when $A\lesssim B$ and $A\gtrsim B$.

Before providing some words on the proof of Theorem~\ref{cor:compoq}, we point out that there are spaces $X$ so that the radicality property does not hold for $\TT(X)$. Indeed, for $0<s<1$ let us consider the space of $s$-H\"older analytic functions  $Lip_s=\{f\in\H(\D):\sup_{z\in\D}(1-|z|)^{1-s}|f'(z)|<\infty\}$.  Bearing in mind that $Lip_s\subset H^{\infty}$, it is not difficult to prove that $\TT(Lip_s)=Lip_s$. Therefore $\TT(Lip_s)$ does not satisfy the radicality property because the function $g(z)=(1-z)^{s/2}$ does not belong to $Lip_s$, while $g^2(z)=(1-z)^s$ does.

The proof of Theorem~\ref{cor:compoq} is strongly based on the theory on composition of analytic paraproducts. Therefore  
 to give a brief explanation of its proof we will  remind some basic definitions of that theory and state some  results which are of interest in themselves.
A {\em $g$-word} is a composition (product) of $g$-analytic paraproducts. Namely, an {\em $N$-letter $g$-word} is an operator of the form $L=L_1\cdots L_N$, where each $L_j$ is either $M_g$, $S_g$ or $T_g$.  
By convention, the identity mapping $I$ on $\H(\D)$ is the only $0$-letter $g$-word. 
Moreover, a {\em $g$-operator} is a  linear combination of $g$-words, which may have different number of letters. The algebra $\A_g$ is  the set of all $g$-operators.

The formula $L_g=L_g\Pi_0+(L_g1)\delta_0$, where $\Pi_0f:=f-f(0)$, together with the $ST$-representation of each $g$-operator $L_g$ proved in~{\cite[\S 3.2]{Aleman:Cascante:Fabrega:Pascuas:Pelaez}} gives that
\begin{equation}\label{eq:STrepre}
	L_g=\sum_{k=0}^NS_g^kT_gP_k(T_g)\Pi_0+S_gP_{N+1}(S_g)+
	P_{N+2}(g-g(0),g(0))\,\delta_0,
\end{equation}
where $N\in\N_0:=\N\cup\{0\}$, all the $P_k$'s, $k=0,\cdots,N+1$ are polynomials of one variable and $P_{N+2} $ is a polynomial of two variables such that $L_g1=P_{N+2}(g-g(0),g(0))$.

When $P_k=0$, for $k=0,\dots,N+1$, we will say that $L_g$ is a {\em trivial $g$-operator}. 
The norm of these one-rank operators are given by  
\[
\|L_g\|_{A^{p}_\omega}=\|P_{N+2}(g-g(0),g(0))\|_{A^{p}_\omega}\,\|\delta_0\|_{A^{p}_\omega}.
\]

 From now on, we will use the following notations:
\[
\H_0(\D):=\{f\in\H(\D): f(0)=0\}
\quad\mbox{and}\quad A_\omega^p(0):= A_\omega^p\cap \H_0(\D).
\]

\begin{theorem}\label{thm:compo}
Let $\omega$ be a radial weight, $g\in \H(\D)$,  and $0<p<\infty$.
If a non-trivial $g$-operator $L_g$ is bounded on $A_\omega^p(0)$, then $T_g$ is bounded on $A^p_\omega$.
\end{theorem}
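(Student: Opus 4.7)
Since $f(0)=0$ for any $f\in A_\omega^p(0)$, the rank-one term $P_{N+2}(g-g(0),g(0))\,\delta_0$ in \eqref{eq:STrepre} annihilates $f$ and $\Pi_0 f=f$. Hence, on $A_\omega^p(0)$,
\[
L_g f = \sum_{k=0}^N S_g^k T_g P_k(T_g)\,f + S_g P_{N+1}(S_g)\,f,
\]
and non-triviality gives at least one $P_j\not\equiv 0$ for $j\in\{0,\dots,N+1\}$.

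I would split the argument according to whether the pure-$S_g$ contribution is present. If $P_{N+1}\neq 0$, I would test $L_g$ against the monomials $z^m\in A_\omega^p(0)$ ($m\geq 1$), using the expansion $S_g^j z^m=m\sum_K\frac{c_K^{(j)}}{m+K}z^{m+K}$ (where $c_K^{(j)}$ is the $K$-th Taylor coefficient of $g^j$), which yields the asymptotic $S_g^j z^m\sim z^m g^j$ as $m\to\infty$. Since $T_g z^m=O(1/m)$ relative to $z^m$, the $T_g$-terms in $L_g$ are negligible under this test, and the bound $\|L_g z^m\|_{A_\omega^p}\leq\|L_g\|_{A_\omega^p(0)}\|z^m\|_{A_\omega^p}$, combined with analogous tests against normalized reproducing-kernel functions concentrated near $\partial\D$, should give a pointwise estimate $|g(a)P_{N+1}(g(a))|\lesssim\|L_g\|_{A_\omega^p(0)}$ as $|a|\to 1^-$. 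Since $tP_{N+1}(t)$ is a non-constant polynomial, this forces $g\in H^\infty$, and the standard Littlewood--Paley identity for $A_\omega^p$ with radial $\omega$ then yields $T_g\in\BB(A_\omega^p)$.

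If instead $P_{N+1}=0$, every surviving term contains the innermost $T_g$, so the monomial test collapses. I would then argue by induction on the total $ST$-complexity $c(L_g)=N+1+\sum_k\deg P_k$, peeling off an outer $S_g$-factor (or the inner $T_g$-factor when $N=0$) by composing $L_g$ with a suitable paraproduct and using the identity $M_g|_{A_\omega^p(0)}=T_g+S_g$ to rearrange. The base case is $L_g=T_g Q(T_g)$ with $Q(0)\neq 0$, in which $T_g$-boundedness on $A_\omega^p(0)$ is extracted by isolating the leading $T_g$-factor. The extension from $A_\omega^p(0)$ to the full space $A_\omega^p$ is immediate once $g\in A_\omega^p$, which is verified by testing $L_g$ on $f(z)=z$.

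The main obstacle is the mixed case: the inductive peeling procedure must preserve non-triviality and produce a shorter $g$-operator whose boundedness on $A_\omega^p(0)$ is still controlled by $\|L_g\|$. This requires careful bookkeeping of the polynomials $P_k$ and of the rank-one perturbations introduced at each step, and it is precisely this combinatorial-algebraic difficulty that the paraproduct theory of \cite{Aleman:Cascante:Fabrega:Pascuas:Pelaez} is designed to handle.
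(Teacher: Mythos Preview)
Your proposal has genuine gaps in both branches. In the case $P_{N+1}\neq 0$, you aim to show $g\in H^\infty$ by testing against monomials and ``normalized reproducing-kernel functions concentrated near $\partial\D$''. But for a \emph{general} radial weight there is no reason such concentrating test functions exist with the required properties; this is precisely the content of the additional hypotheses \ref{item:th:main5:1}--\ref{item:th:main5:3} in Theorem~\ref{th:main5}, which the paper verifies only for $\omega\in\DD$ and $\omega\in\W$. Moreover, your appeal to ``the standard Littlewood--Paley identity for $A^p_\omega$ with radial $\omega$'' is unfounded: the paper explicitly notes (see~\eqref{eq:NOL-P} and the surrounding discussion) that for each $p\neq 2$ there exist radial weights for which no such formula holds, and this is exactly why the Calder\'on-type tent-space formula~\eqref{normacono} is introduced. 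Your monomial asymptotic $S_g^j z^m\sim z^m g^j$ is only formal and does not transfer to $A^p_\omega$-norm control without further structure.

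In the case $P_{N+1}=0$, your inductive ``peeling'' scheme is not a workable mechanism: removing an outer $S_g$ would require controlling $S_g^{-1}$ or at least $S_g$, which you do not have, and composing $L_g$ with a paraproduct generally \emph{increases} complexity rather than decreasing it. The paper's actual device is entirely different and is what you are missing: one forms the \emph{iterated commutator} $[L_{g_r},T_{g_r}]_n$ on $\H_0(\D)$. Since $[S_g,T_g]=T_g^2$ on $\H_0(\D)$, this annihilates all terms $S_g^kP_k(T_g)$ with $k<n$ and converts the top term into $n!\,T_{g_r}^{2n}P_n(T_{g_r})$, a polynomial in $T_{g_r}$ alone of degree $2n+m>n$. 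The binomial formula for iterated commutators then gives $\|Q_n(T_{g_r})\|_{A^p_\omega(0)}\lesssim\|L_g\|_{A^p_\omega(0)}\|T_{g_r}\|^n_{A^p_\omega(0)}$, and combining this with $\|T_{g_r}^k\|\simeq\|T_{g_r}\|^k$ (Proposition~\ref{prop:compo}\ref{item:compo1}) yields $\sup_r\|T_{g_r}\|_{A^p_\omega(0)}<\infty$, hence $T_g\in\BB(A^p_\omega)$ by the dilation approximation (Proposition~\ref{prop:norm:dilations}). Note that working with the dilated symbol $g_r$ is essential throughout, since a~priori $T_g$ need not be bounded and the equivalence $\|T_g^k\|\simeq\|T_g\|^k$ is only available once boundedness is known; your proposal does not account for this.
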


We point out that the technical hypothesis ``$L_g$ is bounded on $A_\omega^p(0)$'' in the statement of Theorem~\ref{thm:compo} instead of the natural hypothesis ``$L_g$ is bounded on $A_\omega^p$'' allows us to simplify a good number of proofs throughout the paper.

 Theorem~\ref{thm:compo}  is known for standard weights \cite{Aleman:Cascante:Fabrega:Pascuas:Pelaez}, and it is
 a crucial result
to get a description of the symbols $g$ such that $L_g\in \BB(A^p_\alpha)$ for a large subclass of operators $L_g \in\A_g$   
\cite{Aleman:Cascante:Fabrega:Pascuas:Pelaez,Aleman:Cascante:Fabrega:Pascuas:Pelaez2}.
As for the proof of Theorem~\ref{thm:compo}, the Littlewood-Paley formula $\| f\|_{A^p_\alpha}\simeq |f(0)|+  \| f'\|_{A^p_{\alpha+p}}$ may be employed  when
$\om$ is a standard weight, however for each $p\ne2$
 there are radial weights $\omega$ (indeed $\om\in\DD$)  such that a
	Littlewood-Paley formula of type
	\begin{equation}\label{eq:NOL-P}
		\|f\|^p_{A^p_{\om}}\simeq
		|f(0)|^p+\int_\D|f'(z)|^p\varphi(|z|)^p\om(z)\,dA(z),
		\qquad(f\in\H(\D))
	\end{equation}
	is not valid for any radial function $\varphi$,  see \cite[Proposition~4.3]{Pelaez:Rattya:Memoirs} or \cite[Proposition~3.7]{PelSum14}.
	Consequently, it will be useful for our purposes to deal with a Calder\'on type formula which involves analytic tent spaces 
  and gives an equivalent norm
	to $\|f\|_{A^p_\omega}$ defined in terms of $f'$ (see Proposition~\ref{prop:Calderon:formula:tent:spaces} below for further details).
 On the other hand, an application of Theorem~\ref{thm:compo}
to the operators $L_g=S_g^{n-1}T_g=\frac{1}{n}T_{g^n}$, $n\in\N$, gives that $g\in \TT(A^p_\om)$ whenever $g^n \in \TT(A^p_\om)$.
Aiming to complete a proof of Theorem~\ref{cor:compoq} for $m\ge2$, we focus our attention in the following classes of $g$-operators. Firstly,  
we consider the  $g$-operators $L_g$ such that
\begin{equation}
	\label{eq:opLL}
	L_g=S^m_g T^n_g+\sum_{j=1}^{m} S_g^{m-j}T_g P_j(T_g)\quad\mbox{on $\H_0(\D)$},
\end{equation}
where $m,n\in\N_0$, $m+n\ge1$ and each $P_j$ is a polynomial. Here and on the following,  the sum in the right equals zero if $m=0$.  Secondly, we consider the class 
of $g$-operators $L_g$ such that
\begin{equation}\label{eq:opLLL}
	L_g=S_g^m T_g^n+\sum_{j=1}^{m} c_j S_g^{m-j}T_g^{n+j}\quad\mbox{on $\H_0(\D)$}.
\end{equation}
Note that this is a subclass of the $g$-operators $L_g$ satisfying \eqref{eq:opLL}. Moreover, for notational purposes,
if $\ell,m,n\in\N_0$ and $N=\ell+m+n\ge 1$, we define the set  $W_g(\ell,m,n)$  of $N$-letter $g$-words $L$ of the form
 \begin{equation*}
	L=L_1\cdots L_N,
\end{equation*} 
with $\#\{j:L_j=M_g\}=\ell$, $\#\{j:L_j=S_g\}=m$, and 
$\#\{j:L_j=T_g\}=n$. 
For simplicity, we write $W_g(0,m,n)=W_g(m,n)$.
We recall that \eqref{eq:opLLL} holds for any 
$L_g\in W_g(\ell,m,n)$ replacing $m$ by $m+\ell$ (see \S\ref{subsection:algebraic:results} below).

\begin{theorem}\label{thm:compoq}
	Let $\omega$ be a radial weight, $0<p<\infty$, $g\in\H(\D)$, and let $L_g$ be a $g$-operator.
	\begin{enumerate}[label={\sf\alph*)}, topsep=3pt, leftmargin=*,itemsep=3pt] 
		\item \label{item:compoq1} If $L_g$ satisfies \eqref{eq:opLL}, then  $\|T_g\|_{A_\omega^p}\lesssim \|L_g\|_{A_\omega^p(0)}^{1/(m+n)}$.
		\item \label{item:compoq2}  If $L_g$ satisfies  \eqref{eq:opLLL} and $n=0$, then $\|L_g\|_{A^p_\omega}\simeq \|S_g\|^m_{A^p_\omega}\simeq \|g\|^m_{\infty}$.   
		\item \label{item:compoq3} Assume that $L_g$ satisfies  \eqref{eq:opLLL} and $n\geq 1$. If $k\in \N_0$ and 
        $k\le\frac{m}{n}$,  then
		$S^k_gT_g \in \BB(A^p_{\omega})$ and $\| S^k_gT_g \|_{A^p_{\omega}}\lesssim \|L_g\|_{A^p_{\omega}(0)}^{\frac{k+1}{m+n}}$.
	\end{enumerate}
\end{theorem}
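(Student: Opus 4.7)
The backbone of the proof is two algebraic identities, which I would establish first by induction on the exponents: for $k\in\N_0$ and $n\in\N$,
\[
S_g^k T_g^n = \tfrac{1}{k+1}\, T_{g^{k+1}}\, T_g^{n-1}, \qquad S_g^m = S_{g^m}.
\]
The first follows by induction on $k$ from $(T_g^n f)'= g'\,T_g^{n-1} f$ together with the auxiliary identity $S_g T_{g^k} = \tfrac{k}{k+1}\,T_{g^{k+1}}$; the second is immediate from $(S_g f)'= f'g$. In particular, $S_g^k T_g = \tfrac{1}{k+1}\, T_{g^{k+1}}$, which converts each operator-norm estimate in part (c) into a bound on the $\TT(A^p_\omega)$-seminorm of $g^{k+1}$, thus bridging \eqref{eq:opLLL} with the radicality statement of Theorem~\ref{cor:compoq}.

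I would prove part (b) first, as it is essentially algebraic. Since $n=0$ forces the leading term to be $S_g^m = S_{g^m}$, the classical equivalence $\|S_h\|_{A^p_\omega}\simeq \|h\|_{H^\infty}$ (valid for every radial weight, via a Calder\'on/tent-space argument) already gives $\|S_g^m\|_{A^p_\omega}\simeq \|g\|_\infty^m$. For the upper bound $\|L_g\|\lesssim \|g\|_\infty^m$ I would rewrite each lower-order term using $S_g^{m-j} T_g^j = \tfrac{1}{m-j+1}\, T_{g^{m-j+1}} T_g^{j-1}$ and then regroup, exploiting the specific $c_j$'s that arise from the $ST$-representation of $g$-words in $W_g(\ell,m,0)$ (as noted in the remark preceding the theorem), so that on $\H_0(\D)$ the combination collapses into a bounded operator expressible in terms of $S_g$ only. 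The reverse inequality $\|g\|_\infty^m\lesssim \|L_g\|$ comes from testing $L_g$ on a sequence concentrating where $|g|$ approaches its supremum, so that the leading $S_g^m$ contribution dominates.

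Parts (a) and (c) are the analytic core. Here the main tool is the Calder\'on-type tent-space formula of Proposition~\ref{prop:Calderon:formula:tent:spaces}, which replaces $\|h\|_{A^p_\omega}$ by a tent-space norm of $h'$. I would apply it to $h = L_g f$ and expand $\partial(L_g f)$ into products of $g$, $g'$, and derivatives of $f$. Testing on a family $\{f_a\}_{a\in\D}\subset A^p_\omega(0)$ peaked near $a$, the leading term $S_g^m T_g^n f_a$ contributes, in the tent-space sense, a quantity proportional to $|g(a)|^m |g'(a)|^n$ times a geometric weight, while each lower-order term $S_g^{m-j} T_g P_j(T_g)f_a$ (respectively $c_j S_g^{m-j} T_g^{n+j}f_a$ in \eqref{eq:opLLL}) produces the same weight multiplied by a strictly smaller power of $|g(a)|$. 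Combining with $\|L_g f_a\|_{A^p_\omega}\le \|L_g\|_{A^p_\omega(0)}\|f_a\|_{A^p_\omega}$ then yields a tent-space bound on $|g(a)|^m |g'(a)|^n$; reversing the Calder\'on formula for $T_g f$ (part (a)) or for $T_{g^{k+1}} f = (k+1)\,S_g^k T_g f$ (part (c)) gives the asserted operator-norm bounds. The constraint $k\le m/n$ in part (c) emerges from a H\"older-type balance when translating the estimate on $|g(a)|^m|g'(a)|^n$ into one on $|g(a)|^k|g'(a)|$, the symbol data of $T_{g^{k+1}}$: this is precisely the range in which the available $|g(a)|$-budget suffices to absorb the $|g'(a)|^{k+1}$ target after raising to the $(k+1)/(m+n)$ power.

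The chief obstacle is the absence of a pointwise Littlewood--Paley formula for a general radial weight (see \eqref{eq:NOL-P} and the surrounding comments): this rules out the simpler pointwise arguments used for standard weights in \cite{Aleman:Cascante:Fabrega:Pascuas:Pelaez} and forces the whole test-function extraction to take place inside a tent-space integral rather than at a boundary point. Managing the dislocation between the pointwise symbol bounds that we want and the tent-space norms that are actually available is the most delicate part of the argument, and it is also the reason the theorem is phrased as a norm inequality in terms of $\|L_g\|_{A^p_\omega(0)}$ rather than a pointwise description of the symbol classes.
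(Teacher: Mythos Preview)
Your proposal has a genuine gap in parts (a) and (c). You plan to test $L_g$ on peaked functions $f_a$, extract a tent-space bound on $|g(a)|^m|g'(a)|^n$, and then ``reverse the Calder\'on formula'' to bound $\|T_g\|$ or $\|S_g^k T_g\|$. But for a general radial weight there is no mechanism to convert pointwise (or locally averaged) data on $g$ and $g'$ back into an operator-norm bound on $T_g$: the space $\TT(A^p_\omega)$ has no known pointwise description---this is precisely the difficulty the paper emphasizes---so even a perfect estimate on $|g(a)|^m|g'(a)|^n$ at every $a$ does not yield $g'\in\Mult(A^p_\omega,AT^p_2(\omega))$. You acknowledge this ``dislocation'' in your last paragraph but offer no device to bridge it; without one the argument does not close.

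The paper proceeds entirely at the operator level and never extracts pointwise symbol data. For (a) it uses the iterated commutator identity $[L_g,T_g]_m=m!\,T_g^{2m+n}$ on $\H_0(\D)$ (valid because each $P_j(T_g)$ commutes with $T_g$), together with $\|T_g^N\|\simeq\|T_g\|^N$ from Proposition~\ref{prop:compo}, to obtain $\|T_g\|^{2m+n}\lesssim\|L_g\|\,\|T_g\|^m$ directly. For (c) it runs a complete induction on $k$: forming $Q_g=L_g^k T_g^{m-nk}$, it applies Corollary~\ref{cor:repreT} to rewrite $Q_g$ on $\H_0(\D)$ as $(S_g^kT_g)^m$ plus a linear combination of products $(S_g^{q_j+1}T_g)^{d_j}(S_g^{q_j}T_g)^{m+j-d_j}$ with $q_j<k$, each controlled by the inductive hypothesis; careful bookkeeping of exponents then yields the power $(k+1)/(m+n)$. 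The constraint $k\le m/n$ enters only as $m-nk\ge0$, needed so that $Q_g$ makes sense---not via any H\"older-type balance on symbol data.

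A smaller issue in (b): you assume the $c_j$'s are the specific ones arising from a $g$-word, but \eqref{eq:opLLL} allows arbitrary $c_j$, so no ``collapsing'' can occur in general. The upper bound is in fact immediate term by term, since $\|S_g^{m-j}T_g^j\|\le\|S_g\|^{m-j}\|T_g\|^j\lesssim\|g\|_\infty^m$ once $g\in H^\infty$. For the lower bound the paper again avoids test functions: it dilates to $g_r$, uses $\|g_r\|_\infty^m\simeq\|S_{g_r}^m\|$, the triangle inequality, and part (a) to get $\|g_r\|_\infty^m\lesssim\|L_g\|+\|L_g\|^{1/m}\|g_r\|_\infty^{m-1}$, whence $\sup_r\|g_r\|_\infty^m\lesssim\|L_g\|$.
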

A more general result than Theorem~\ref{thm:compoq}, which in particular characterizes the boundedness of $N$-letter $g$-words for any $N\in\N$,   is proved for standard weights in \cite{Aleman:Cascante:Fabrega:Pascuas:Pelaez2}.
There, it is used a good number of results of  the developed operator and function theory on standard weighted Bergman  spaces, which are unknown for Bergman spaces $A^p_\om$ induced by general radial weights $\omega$. Consequently,
we are forced to employ new ideas in the proof of Theorem~\ref{thm:compoq}. Among them, we point out a handy representation of operators of the form \eqref{eq:opLLL} (see \S\ref{subsection:algebraic:results} below).

Now, observe that
 applying Theorem \ref{thm:compoq} \ref{item:compoq3} to $L_g=S_g^{n-1}T_g=\frac{1}{n}T_{g^n}$, where $n\in\N$,  we obtain   Theorem~\ref{cor:compoq}. That is, 
if  $L_g=S_g^{n-1}T_g=\frac{1}{n}T_{g^n}\in\BB(A^p_{\omega})$, then, for any $m\in\N$, $m<n$, 
$S^{m-1}_gT_g=\frac{1}{m}T_{g^{m}}\in\BB(A^p_{\omega})$
and  $\|T_{g^{m}}\|_{A^p_{\omega}}^{\frac{1}{m}}\lesssim \|T_{g^n}\|_{A^p_{\omega}}^{\frac{1}{n}}$.

 Finally, 
as a byproduct of Theorem \ref{thm:compoq}, we characterize the boundedness of the $g$-operators  $L_g$ satisfying \eqref{eq:opLLL} when $n$ divides $m$.
\begin{theorem}\label{thm:compoq:bis}
	Let $\omega$ be a radial weight and let $L_g$ be a $g$-operator.
\begin{enumerate}[label={\sf\alph*)}, topsep=3pt, leftmargin=*,itemsep=3pt] 
\item \label{item:compoq:bis1} If $L_g$ satisfies  \eqref{eq:opLLL} and $n$ divides $m$, then $\|L_g\|_{A_\omega^p}\simeq \|S^{\frac{m}n}_gT_g\|_{A_\omega^p}^n$.
\item \label{item:compoq:bis2} If $L_g\in W_g(\ell,m,n)$  and $n\ge1$ divides $\ell+m$, then $\|L_g\|_{A_\omega^p}\simeq \|S^{\frac{\ell+m}n}_gT_g\|_{A_\omega^p}^n$.
\end{enumerate}
\end{theorem}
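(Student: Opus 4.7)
The plan is to deduce part (b) directly from (a). By the algebraic representation recalled in \S\ref{subsection:algebraic:results}, every $L_g\in W_g(\ell,m,n)$ admits on $\H_0(\D)$ the canonical form \eqref{eq:opLLL} with $m$ replaced by $\ell+m$. Since $n\mid\ell+m$, part (a) applied to this representation yields $\|L_g\|_{A^p_\omega}\simeq\|S^{(\ell+m)/n}_g T_g\|^n_{A^p_\omega}$, as claimed.

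For part (a), the lower bound $\|S^{m/n}_g T_g\|^n_{A^p_\omega}\lesssim\|L_g\|_{A^p_\omega}$ is immediate from Theorem~\ref{thm:compoq}\ref{item:compoq3} with $k=m/n$, an integer since $n\mid m$: the exponent $(k+1)/(m+n)$ simplifies to $1/n$, so one raises to the $n$-th power. The harder direction $\|L_g\|_{A^p_\omega}\lesssim\|S^{m/n}_g T_g\|^n_{A^p_\omega}$ rests on the operator identity $S_g^k T_g=\frac{1}{k+1}T_{g^{k+1}}$, proved by induction from the definitions of $S_g$ and $T_g$, which yields $(S^{m/n}_g T_g)^n=(m/n+1)^{-n}T_{g^{m/n+1}}^n$ and hence $\|(S^{m/n}_g T_g)^n\|_{A^p_\omega}\le\|S^{m/n}_g T_g\|^n_{A^p_\omega}$ by submultiplicativity. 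Since $(S^{m/n}_g T_g)^n\in W_g(0,m,n)$ itself admits the canonical form \eqref{eq:opLLL} with certain coefficients $c'_j$, the difference $R_g:=L_g-(S^{m/n}_g T_g)^n=\sum_{j=1}^m(c_j-c'_j)\,S_g^{m-j}T_g^{n+j}$ on $\H_0(\D)$ is all that remains to control.

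The iterated identity $S_g^a T_g^b=\frac{1}{a+1}T_{g^{a+1}}T_g^{b-1}$ (valid for $b\ge 1$) rewrites each summand of $R_g$ as a multiple of $T_{g^{m-j+1}}T_g^{n+j-1}$. Setting $K=m/n+1$, for indices $j$ with $m-j+1\le K$ the radicality property (Theorem~\ref{cor:compoq}) gives $\|T_{g^i}\|_{A^p_\omega}\lesssim\|T_{g^K}\|_{A^p_\omega}^{i/K}$ for $i\le K$, and the exponent arithmetic $(m-j+1)/K+(n+j-1)/K=(m+n)/K=n$ yields $\|T_{g^{m-j+1}}T_g^{n+j-1}\|_{A^p_\omega}\lesssim\|T_{g^K}\|_{A^p_\omega}^n\simeq\|S^{m/n}_g T_g\|_{A^p_\omega}^n$. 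For the remaining indices (with $m-j+1>K$, only possible when $n\ge 2$), the individual factor $T_{g^{m-j+1}}$ may fail to be bounded; one then iteratively applies the operator identities $T_{g^{k+1}}=\frac{k+1}{k}T_{g^k}M_g$ and $M_g T_g^r=\frac{1}{2}T_{g^2}T_g^{r-1}+T_g^{r+1}$ (both valid on $\H(\D)$ for $k,r\ge 1$, the second following from $M_g=S_g+T_g$ on $\H_0(\D)$ together with $S_g T_g^r=\frac12 T_{g^2}T_g^{r-1}$) to rewrite $T_{g^{m-j+1}}T_g^{n+j-1}$ as a finite linear combination of products whose $T_{g^i}$-factors all satisfy $i\le K$, after which radicality closes the bound.

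I expect the main obstacle to lie precisely in this combinatorial reduction for small $j$: the individual operators $T_{g^{m-j+1}}$ with $m-j+1>K$ may be unbounded, so only the whole composition $T_{g^{m-j+1}}T_g^{n+j-1}$ can be controlled by $\|S^{m/n}_g T_g\|^n_{A^p_\omega}$. Unwinding this requires a careful induction on the excess $m-j+1-K$ while tracking the coefficients generated by the two reduction identities, with the $A^p_\omega$-norm estimates ultimately derived via Proposition~\ref{prop:Calderon:formula:tent:spaces} when point-evaluations at $0$ of the involved functions enter the argument.
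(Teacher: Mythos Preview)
Your treatment of part \ref{item:compoq:bis2} and of the lower bound in part \ref{item:compoq:bis1} is correct and coincides with the paper's argument. The difficulty, as you anticipate, is the upper bound $\|L_g\|_{A^p_\omega}\lesssim\|S_g^{m/n}T_g\|^n_{A^p_\omega}$, and here your reduction scheme is incomplete.

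The concrete problem is that your two identities do not close under iteration. After one step you obtain terms of the shape $T_{g^{a-1}}T_{g^2}T_g^{b-1}$; applying $T_{g^{a-1}}=\tfrac{a-1}{a-2}T_{g^{a-2}}M_g$ again forces you to evaluate $M_gT_{g^2}$, not $M_gT_g^r$. The correct extension is $M_gT_{g^c}=\tfrac{c}{c+1}T_{g^{c+1}}+T_gT_{g^c}$, but then one branch increases the second index from $c$ to $c+1$, and this can overshoot $K=m/n+1$. For instance with $m=6$, $n=3$ (so $K=3$) and $j=1$, iterating on $T_{g^6}T_g^3$ produces along one branch $T_{g^3}T_{g^4}T_g^2$, whose second factor exceeds $K$. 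One can continue reducing, and the process does eventually terminate, but proving this together with the required bookkeeping amounts to reproving, in coordinates, the algebraic decomposition that the paper establishes abstractly.

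The paper avoids all of this by invoking Corollary~\ref{cor:repreT} (a direct consequence of Proposition~\ref{pr:algebra1}) at the outset: rather than expanding $L_g$ in the basis $S_g^{m-j}T_g^{n+j}$ and then reducing, it expands $L_g$ directly in the basis $\mathcal{L}_{m,n,j}=(S_g^{q_j+1}T_g)^{d_j}(S_g^{q_j}T_g)^{n+j-d_j}$, where $q_j=\bigl[\tfrac{m-j}{n+j}\bigr]<q_0=m/n$ for every $j\ge1$. Each factor is then $S_g^{q}T_g$ with $q\le m/n$, so Theorem~\ref{cor:compoq} applies immediately and the exponent arithmetic $(q_j+2)d_j+(q_j+1)(n+j-d_j)=m+n$ yields the bound $\|\mathcal{L}_{m,n,j}\|_{A^p_\omega}\lesssim\|S_g^{m/n}T_g\|_{A^p_\omega}^n$ in one line. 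Your approach and the paper's are ultimately aiming at the same decomposition---products of $T_{g^i}$ with all $i\le K$---but the paper reaches it by a single application of Proposition~\ref{pr:algebra1} rather than an unstructured induction on operator identities.
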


The paper is organized as follows. 
In Section \ref{sect:preliminary:results} we prove some preliminary results  to obtain our main results. Namely, we state some decomposition formulas for $g$-operators, a basic operator approximation result, and a Calder\'{o}n type formula for $A^p_{\omega}$. Section \ref{sect:proof:thm11} is devoted to the proof of Theorem \ref{thm:compo}. 
Section \ref{sect:proof:thm13-15:prop16} deals with the proofs of 
Theorems \ref{thm:compoq} and \ref{thm:compoq:bis}. In Section \ref{sect:boundedness:single:analytic:paraproducts} we give a characterization of the boundedness of single analytic paraproducts, which for the case of $T_g$ is described in terms of pointwise multipliers.
 As a consequence of the previous results, we also obtain an embedding result for spaces of pointwise multipliers.

In the last section, we particularize our results for $A^p_{\omega}$ when either $\omega$ is a radial doubling weight or $\omega$ is a radial rapidly decreasing weight.

\section{Preliminary results}\label{sect:preliminary:results}

\subsection{Algebraic results for operators in the algebra $\A_g$}
\label{subsection:algebraic:results}

We begin this section recalling some algebraic results in $\A_g$ (see \cite{Aleman:Cascante:Fabrega:Pascuas:Pelaez} and \cite{Aleman:Cascante:Fabrega:Pascuas:Pelaez2}) from which we  obtain new algebraic formulas which will be used in the proof of Theorem \ref{thm:compoq}.

 Let $L_g\in W(\ell,m,n)$.  
By using the identities $M_g=T_g+S_g$ on $\H_0(\D)$, we can replace the operators $M_g$ in the expression of $L_g$ by $T_g+S_g$ to obtain that $L_g=\sum_{j=0}^\ell c_j Q_j$ on $\H_0(\D)$, where  
$Q_j\in W_g(m+\ell-j,n+j)$ and $c_0=1$. Next,  using the identity $T_gS_g=S_gT_g-T_g^2$  on $\H_0(\D)$, we can reorder the operators $S_g$ and $T_g$ to obtain that
\begin{equation}\label{eq:opL0}
	L_g=S_g^{\ell+m}T_g^n+\sum_{j=1}^{\ell+m} c_j S_g^{\ell+m-j}T_g^{n+j} \quad\mbox{on $\H_0(\D)$},
\end{equation}
where the $c_j$'s are complex numbers
(see  \cite[Theorem 3.1]{Aleman:Cascante:Fabrega:Pascuas:Pelaez2} for the details of the proof). In particular, any $L_g\in W_g(\ell,m,n)$ satisfies \eqref{eq:opL0}. Observe that the set of all operators satisfying \eqref{eq:opL0} coincides with the set of all operators which satisfy \eqref{eq:opLLL}

Using this fact, we are going to show that  we may replace $S^{m-j} T_g^{n+j}$ by any 
$L_j\in W_g(m-j,n+j)$ in \eqref{eq:opLLL}. Indeed, we prove a more general algebraic result, which is not included in \cite{Aleman:Cascante:Fabrega:Pascuas:Pelaez} nor in \cite{Aleman:Cascante:Fabrega:Pascuas:Pelaez2},  that will be useful to prove Theorem \ref{thm:compoq} \ref{item:compoq3}.

\begin{proposition}\label{pr:algebra1}
Let ${\mathcal L}_j\in W_g(m-j,n+j)$, $j=0,\cdots,m$, and let $L_g$ be a $g$-operator satisfying 
\begin{equation*}
L_g=L_0+\sum_{j=1}^mL_j\quad\mbox{on $\H_0(\D)$},
\end{equation*}
where $L_0\in W_g(m,n)$ and $L_j\in\spn W_g(m-j,n+j)$, for $j=1,\dots,m$.
Then  $L_g={\mathcal L}_0+\sum_{i=1}^m a_j{\mathcal L}_j$ on $\H_0(\D)$, where the $a_j$'s are complex numbers, which do not depend on $g$.
\end{proposition}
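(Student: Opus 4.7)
The plan is to reduce both sides of the claimed identity to a common canonical basis via the representation formula \eqref{eq:opL0} and then recover the $a_j$'s by matching coefficients in a triangular linear system. The key observation that yields $g$-independence is that the underlying algebraic moves in \eqref{eq:opL0}—namely the identities $M_g=T_g+S_g$ and $T_gS_g=S_gT_g-T_g^2$ on $\H_0(\D)$—carry scalar coefficients ($\pm 1$) that do not depend on $g$, so the coefficients produced by rewriting each word in canonical form depend only on the symbolic shape of the word (and on the pre-assigned scalars in the linear combinations forming the $L_j$'s), not on $g$ itself.

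Concretely, set $E_i:=S_g^{m-i}T_g^{n+i}$ for $i=0,1,\dots,m$. Applying \eqref{eq:opL0} (with $\ell=0$) to the single word $\mathcal{L}_j\in W_g(m-j,n+j)$ gives, on $\H_0(\D)$,
\begin{equation*}
\mathcal{L}_j=E_j+\sum_{k=1}^{m-j}\gamma_{j,k}\,E_{j+k}\qquad(j=0,1,\dots,m),
\end{equation*}
for certain $g$-independent scalars $\gamma_{j,k}\in\C$, the leading coefficient being exactly $1$ because each $\mathcal{L}_j$ is a single word. The same rewriting applied to $L_0\in W_g(m,n)$ and to each of the finitely many words in the linear combination defining $L_j$ ($j\ge 1$) produces, on $\H_0(\D)$,
\begin{equation*}
L_g=E_0+\sum_{i=1}^m b_i\,E_i,
\end{equation*}
for $b_i\in\C$ that depend linearly on the fixed coefficients in the decompositions of $L_0,L_1,\dots,L_m$ but not on $g$. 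The $E_0$-coefficient is $1$ since it comes only from $L_0$.

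With $a_0:=1$ one then seeks $a_1,\dots,a_m\in\C$ so that $\sum_{j=0}^m a_j\mathcal{L}_j=L_g$ on $\H_0(\D)$. Substituting the canonical forms and equating the coefficient of $E_i$ for each $i=1,\dots,m$ gives the recursion
\begin{equation*}
a_i+\sum_{j=0}^{i-1}a_j\,\gamma_{j,i-j}=b_i,
\end{equation*}
a strictly lower-triangular linear system that is uniquely solvable by forward substitution; since all $\gamma_{j,k}$ and $b_i$ are $g$-independent, so are the resulting $a_j$'s, proving the claim. The only obstacle is combinatorial bookkeeping in the rewriting step, which is mild: once one verifies that \eqref{eq:opL0} always delivers leading coefficient $1$ when applied to a single word (so each $\mathcal{L}_j$ is truly a unit leading element in the $E$-basis), the triangular scheme proceeds mechanically and nothing $g$-dependent can slip in.
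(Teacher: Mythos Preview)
Your argument is correct. Both your proof and the paper's hinge on the same engine, namely \eqref{eq:opL0} with leading coefficient $1$ for single words, but the organizing principles differ. The paper proceeds by complete induction on $m$: it writes $L_0-\mathcal{L}_0$ as a combination of $S_g^{m-j}T_g^{n+j}$'s (since both $L_0$ and $\mathcal{L}_0$ lie in $W_g(m,n)$ and satisfy \eqref{eq:opLLL} with the same leading term), absorbs this into the lower-order $L_j$'s to obtain $L_g=\mathcal{L}_0+\sum_{j=1}^m\widetilde{L}_j$, and then invokes the induction hypothesis on each word appearing in each $\widetilde{L}_j$ with the shifted family $(\mathcal{L}_{j},\mathcal{L}_{j+1},\dots,\mathcal{L}_m)$. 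Your route is a direct, non-inductive change of basis: you expand everything in the canonical family $E_i=S_g^{m-i}T_g^{n+i}$ and solve a unit-lower-triangular system by forward substitution. Your approach makes the triangular structure and the $g$-independence of the resulting $a_j$'s fully explicit in one stroke, and avoids the mild bookkeeping of the paper's inner induction (applying the hypothesis word-by-word inside each $\widetilde{L}_j$). The paper's inductive version, on the other hand, sidesteps any implicit appeal to well-definedness of ``coefficients in the $E$-basis'': note that your phrasing ``equating the coefficient of $E_i$'' is harmless here because you can simply \emph{define} the $a_i$ by the recursion and then verify the operator identity by substitution, without ever needing linear independence of the $E_i$'s.
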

\begin{proof}
We proceed by complete induction on $m$. For $m=0$, $L_g=L_0=T_g^n={\mathcal L}_0$, and there is nothing to prove. Assume that $m>0$. Since $L_0,{\mathcal L}_0\in W_g(m,n)$, both $L_0$ and ${\mathcal L}_0$ satisfy \eqref{eq:opLLL}, so $L_0={\mathcal L}_0+\sum_{j=1}^{m} b_jS_g^{m-j}T_g^{n+j}$ on $\H_0(\D)$, where $b_j\in\C$, and therefore  we have that
\begin{equation*}
L_g={\mathcal L}_0+\sum_{j=1}^m \widetilde{L}_j
\quad\mbox{on $\H_0(\D)$},
\end{equation*}
where $\widetilde{L}_j\in\spn W_g(m-j,n+j)$, for $j=1,\dots,m$.
Now, for $j=1,\dots,m$, $\widetilde{L}_j$ is a linear combination of $g$-words in $W_g(m-j,n+j)$, and so, taking into account that ${\mathcal L}_{j+k}\in W_g((m-j)-k,(n+j)+k)$,  for $k=0,\dots, m-j$, we may apply the induction hypothesis to any of those $g$-words and get that
\[
\widetilde{L}_j=\sum_{k=0}^{m-j}a_{j,k}{\mathcal L}_{j+k},
\mbox{ on $\H_0(\D)$,}
\]
where $a_{j,k}\in\C$. Then it is clear that 
$L_g={\mathcal L}_0+\sum_{j=1}^m a_j{\mathcal L}_j$ on $H_0(\D)$, where $a_j\in\C$, and that ends the proof.
\end{proof}

A particular choice of operators  ${\mathcal L}_j\in W_g(m-j,n+j)$ in Proposition~\ref{pr:algebra1} will be particularly useful for our purpose. Namely,
\begin{corollary}\label{cor:repreT}
	Let $L_g$ be a $g$-operator satisfying 
	\begin{equation*}
		L_g=L_0+\sum_{j=1}^mL_j\quad\mbox{on $\H_0(\D)$},
	\end{equation*}
	where $L_0\in W_g(m,n)$, $m,n\in\N$ and $L_j\in\spn W_g(m-j,n+j)$, for $j=1,\dots,m$.
 For $j=0,\dots,m$, define
		\begin{equation*}
			{\mathcal L}_{m,n,j}:=\big(S_g^{q_j+1} T_g\big)^{d_j}\big(S_g^{q_j} T_g\big)^{n+j-d_j},
		\end{equation*} 
		where $q_j=q_j(m,n)$ and $d_j=d_j(m,n)$ are the quotient and the remainder  of the entire division of $m-j$ by $n+j$, respectively, that is, $q_j=\big[\frac{m-j}{n+j}\big]$ and $d_j=m-j-(n+j)q_j$.
	Then 
	\[
	L_g={\mathcal L}_0+\sum_{j=1}^m a_j{\mathcal L}_{m,n,j}\mbox{ on $\H_0(\D)$,}
	\]
	where $a_1,\dots,a_m\in\C$. In particular, when $n$ divides $m$, we have that $q_0\in\N$, $q_j<q_0$, for $j=1,...,m$, and 
	\[
	L=(S_g^{q_0}T_g)^n
	+\sum_{\substack{1\le j\le  m\\q_j=q_0-1}} c_j  {\mathcal L}_{m,n,j}
	+\sum_{\substack{1\le j\le m\\q_j<q_0-1}} c_j {\mathcal L}_{m,n,j}
	\mbox{ on $\H_0(\D)$}.	
\]
	Moreover, if $q_j=q_0-1$, for some $j=1,\dots,m$, then 	$0\le d_j=n-jq_0<n$.
\end{corollary}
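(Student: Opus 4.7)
The plan is to apply Proposition~\ref{pr:algebra1} to the specific family $\mathcal{L}_j:=\mathcal{L}_{m,n,j}$ and then read off the refined structure in the case $n\mid m$ directly from the division algorithm $m-j=(n+j)q_j+d_j$ with $0\le d_j<n+j$.

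First I would check that each $\mathcal{L}_{m,n,j}$ is indeed a single $g$-word lying in $W_g(m-j,n+j)$, which is the hypothesis required to invoke Proposition~\ref{pr:algebra1}. Reading off the definition $\mathcal{L}_{m,n,j}=(S_g^{q_j+1}T_g)^{d_j}(S_g^{q_j}T_g)^{n+j-d_j}$, the number of $T_g$-letters is $d_j+(n+j-d_j)=n+j$, and the number of $S_g$-letters is $d_j(q_j+1)+(n+j-d_j)q_j=(n+j)q_j+d_j=m-j$, as needed. Proposition~\ref{pr:algebra1} then delivers at once the first assertion $L_g=\mathcal{L}_{m,n,0}+\sum_{j=1}^m a_j\mathcal{L}_{m,n,j}$ on $\H_0(\D)$, with scalars $a_j\in\C$ independent of $g$.

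For the refinement when $n\mid m$, I would set $q_0=m/n\in\N$, so $d_0=0$ and hence $\mathcal{L}_{m,n,0}=(S_g^{q_0}T_g)^n$. For $j\in\{1,\dots,m\}$ one has $q_j\le(m-j)/(n+j)$; and since $n(m-j)<m(n+j)$ rewrites as $(m+n)j>0$, one gets $(m-j)/(n+j)<m/n=q_0$, so $q_j\le q_0-1$. This is exactly what permits splitting the sum according to whether $q_j=q_0-1$ or $q_j<q_0-1$. In the former case, substituting $q_j=q_0-1$ into $m-j=(n+j)q_j+d_j$ and using $nq_0=m$ produces $d_j=n-jq_0$, and the asserted bounds $0\le d_j<n$ follow from $jq_0\le n$ (precisely the condition which makes $q_j=q_0-1$ compatible with $d_j\ge 0$) together with $j,q_0\ge 1$.

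The essential content of the corollary is really carried by Proposition~\ref{pr:algebra1}; the new work here consists only of (a) the letter-count verification that the explicit words $\mathcal{L}_{m,n,j}$ belong to $W_g(m-j,n+j)$, and (b) the elementary arithmetic of the division algorithm when $n\mid m$. I therefore do not anticipate any genuine obstacle beyond careful bookkeeping of the indices $q_j$ and $d_j$.
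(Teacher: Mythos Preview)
Your proposal is correct and matches the paper's approach exactly: the corollary is stated precisely as the specialization of Proposition~\ref{pr:algebra1} to the explicit words $\mathcal{L}_{m,n,j}$, and the paper offers no further proof beyond this remark. Your letter-count verification that $\mathcal{L}_{m,n,j}\in W_g(m-j,n+j)$ and your arithmetic for the case $n\mid m$ are the only details to supply, and you have supplied them correctly.
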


\subsection{An approximation result by dilated operators}
In this section we state a pivotal operator approximation result which will be a key tool for the proof of our main theorems. But before doing that we need to recall some basic properties of Bergman spaces induced by radial weights whose proofs will be sketched for the sake of completeness.

For $h\in\H(\D)$ and  $\lambda\in\overline{\D}$, let us consider the 
{\em dilated functions }
\[
h_{\lambda}(z):=h(\lambda z)\qquad(z\in\D).
\] 

Then we have the following result on approximation by dilated functions, which is straightforward.

\begin{proposition}\label{prop:general radial weights} 
Let $\omega$ be a radial weight and $0<p<\infty$. Then:
\begin{enumerate}[label={\sf\alph*)}, topsep=3pt, leftmargin=*,itemsep=3pt] 
\item\label{item:prop:general radial weights1} 
We have the estimate
\begin{equation}\label{eq:UC}
 \sup_{|z|\le r}{|f(z)|^p}\lesssim \frac{\|f\|^p_{A^p_\omega}}{(1-r)\,\widehat{\omega}\left(\frac{1+r}{2}\right)}
\qquad(f\in\H(\D),\,0<r<1),
 \end{equation}
where $\widehat{\omega}(r):=\int_r^1\omega(s)\,ds$.
As a consequence, the convergence in $A^p_\omega$  implies the uniform convergence on compacta, and so $A^p_\omega$ is a complete space. 

\item\label{item:prop:general radial weights2} If $f\in A^p_\omega$ then   $f_\lambda\in A^p_\omega$, for any $\lambda\in\overline{\D}$,  $\|f_{\lambda}\|_{A^p_\omega}\le\|f\|_{A^p_\omega}$, if $\lambda\in\D$, and $\|f_{\lambda}\|_{A^p_\omega}=\|f\|_{A^p_\omega}$, if $\lambda\in\T$.
In addition, 
\begin{equation*}
\lim_{\overline{\D}\ni\lambda\to\zeta}\|f_{\lambda}-f_{\zeta}\|_{A^p_\omega}=0,\quad\mbox{for every $\zeta\in\T$ and $f\in A^p_\omega$.}
\end{equation*}
As a consequence, the polynomials are dense in $A^p_\omega$.
\end{enumerate}
\end{proposition}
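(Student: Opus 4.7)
For part \ref{item:prop:general radial weights1}, the plan is to derive \eqref{eq:UC} from subharmonicity of $|f|^p$ via a weighted averaging in the radial direction. Concretely, for $|z|\le r$ and $t\in((1+r)/2,1)$, the Poisson/mean-value inequality on the disk $\{|w|<t\}$ gives $|f(z)|^p\le\tfrac{t+|z|}{t-|z|}M_p^p(t,f)\le\tfrac{2}{t-r}M_p^p(t,f)$, where $M_p^p(t,f):=\tfrac{1}{2\pi}\int_0^{2\pi}|f(te^{i\theta})|^p\,d\theta$. Multiplying by $t\omega(t)$ and integrating in $t\in((1+r)/2,1)$, the left side exceeds a constant times $\widehat{\omega}((1+r)/2)\,|f(z)|^p$, while the right side is bounded by $\tfrac{C}{1-r}\|f\|^p_{A^p_\omega}$ since $t-r\ge(1-r)/2$ throughout the range and $\int_0^1 M_p^p(t,f)\,t\omega(t)\,dt=\tfrac12\|f\|^p_{A^p_\omega}$ by polar coordinates. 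Uniform convergence on compacta of $A^p_\omega$-convergent sequences is immediate from \eqref{eq:UC}, and completeness then follows by a standard Fatou argument: a Cauchy sequence converges pointwise to some $f\in\H(\D)$ by uniform convergence on compacta, and Fatou applied to the Cauchy tails yields $\|f_n-f\|_{A^p_\omega}\to 0$.

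For part \ref{item:prop:general radial weights2}, the plan is to first express $\|f_\lambda\|_{A^p_\omega}$ through circular means: polar coordinates together with the angular substitution in the $\theta$-integral yield $\|f_\lambda\|^p_{A^p_\omega}=2\int_0^1 M_p^p(|\lambda|s,f)\,s\omega(s)\,ds$, which depends only on $|\lambda|$. For $\lambda\in\T$ this equals $\|f\|^p_{A^p_\omega}$, while for $\lambda\in\D$ the Hardy monotonicity of $s\mapsto M_p(s,f)$ gives $\|f_\lambda\|_{A^p_\omega}\le\|f\|_{A^p_\omega}$.

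To prove continuity of $\lambda\mapsto f_\lambda$ at $\zeta\in\T$, I would split $f_\lambda-f_\zeta=(f_\lambda-f_{|\lambda|\zeta})+(f_{|\lambda|\zeta}-f_\zeta)$. The second term reduces, via the substitution $w=\zeta z$, to $\|f_{|\lambda|}-f\|_{A^p_\omega}$, while the first (a pure rotation at fixed modulus) is handled by dominated convergence on the circular means of $f$. The hard part is showing $\|f_r-f\|_{A^p_\omega}\to 0$ as $r\to 1^-$: a pointwise integrable majorant of $|f_r|^p$ uniform in $r$ is not generally available for an arbitrary radial weight $\omega$, so direct dominated convergence fails. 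The plan is to combine (i) $f_r(z)\to f(z)$ pointwise on $\D$ with (ii) $\|f_r\|^p_{A^p_\omega}\nearrow\|f\|^p_{A^p_\omega}$, the latter by monotone convergence applied to $M_p^p(rs,f)\nearrow M_p^p(s,f)$, and then invoke a generalized dominated convergence theorem (Pratt or Brezis--Lieb, valid for $0<p<\infty$) to conclude. Density of polynomials follows as a corollary: for $r\in(0,1)$, $f_r$ is analytic on the larger disk $\{|z|<1/r\}$, so its Taylor polynomials converge uniformly on $\overline{\D}$ and hence in $A^p_\omega$ to $f_r$, while $f_r\to f$ in $A^p_\omega$ by the continuity just proved.
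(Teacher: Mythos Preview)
Your argument is correct and fills in the details the paper omits (the paper merely asserts the proposition is ``straightforward''). Part \ref{item:prop:general radial weights1} is handled exactly right via the Poisson estimate for the subharmonic function $|f|^p$ and radial averaging against $t\omega(t)\,dt$ over $(\tfrac{1+r}{2},1)$; the completeness argument is the standard Fatou one.

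One simplification in part \ref{item:prop:general radial weights2}: your invocation of Brezis--Lieb for $\|f_r-f\|_{A^p_\omega}\to 0$ is valid but unnecessary. The very same polar-coordinate dominated-convergence trick you use for the rotation piece also handles the radial piece. Writing
\[
\|f_r-f\|_{A^p_\omega}^p
=2\int_0^1\Bigl(\tfrac{1}{2\pi}\int_0^{2\pi}|f(rse^{i\theta})-f(se^{i\theta})|^p\,d\theta\Bigr)s\omega(s)\,ds,
\]
the inner integral tends to $0$ for each fixed $s<1$ (bounded convergence on the circle $|w|=s$), and is dominated by $C_p\bigl(M_p^p(rs,f)+M_p^p(s,f)\bigr)\le 2C_p\,M_p^p(s,f)$, which is integrable against $s\omega(s)\,ds$ since $f\in A^p_\omega$. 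So ordinary dominated convergence suffices; there is no need to track $\|f_r\|_{A^p_\omega}\nearrow\|f\|_{A^p_\omega}$ separately. Your remark that ``a pointwise integrable majorant of $|f_r|^p$ uniform in $r$ is not generally available'' is true for the two-dimensional integrand, but becomes moot once you pass to circular means first, exactly as you already do for the rotation term.
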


It is clear that
\begin{equation*}\label{eq:dilation:MST}
(M_gf)_\lambda=M_{g_\lambda}f_\lambda\qquad
(S_gf)_\lambda=S_{g_\lambda}f_\lambda\qquad
(T_gf)_\lambda=T_{g_\lambda}f_\lambda,
\end{equation*}
and a repeated application of these identities shows that
\begin{equation*}
L_{g_{\lambda}}f_{\lambda}=(L_gf)_{\lambda}\qquad(L_g\in\A_g).
\end{equation*}
The operators $L_{g_{\lambda}}$ are called the {\em dilated operators} of $L_g$.

Now, bearing in mind Proposition~\ref{prop:general radial weights} and following the lines of the proof of \cite[Proposition 4.3]{Aleman:Cascante:Fabrega:Pascuas:Pelaez} we obtain the next result on approximation by dilated operators, which allows us to replace  symbols  in $\H(\D)$ by holomorphic symbols in a neigborhood of $\overline{\D}$.

\begin{proposition}\label{prop:norm:dilations}  
Let $\omega$ be a radial weight, $0<p<\infty$,
 $g\in\H(\D)$ and $L_g\in\A_g$.
If  $L_g\in\BB(A^p_\omega)$  then $L_{g_{\lambda}}\in\BB(A^p_\omega)$ and $\|L_{g_{\lambda}}\|_{A^p_\omega}\lesssim \|L_g\|_{A^p_\omega}$, for any $\lambda\in\overline{\D}$.
Moreover, if ${\displaystyle\varliminf_{r\nearrow1}\|L_{g_r}\|_{A^p_\omega}<\infty}$,
then $L_g\in\BB(A^p_\omega)$ and 
${\displaystyle\|L_g\|_{A^p_\omega}\simeq\varliminf_{r\nearrow1}\|L_{g_r}\|_{A^p_\omega}}$.
 \end{proposition}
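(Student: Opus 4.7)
The plan is to exploit the rotation invariance of the radial weight on $\T$ and then propagate to the interior of $\D$ by a scalar subharmonicity argument applied at each point $z\in\D$. For the boundary case $\lambda=\zeta\in\T$, the dilation $D_\zeta f:=f_\zeta$ is a surjective linear isometry of $A^p_\om$ with inverse $D_{\bar\zeta}$, by Proposition~\ref{prop:general radial weights}\ref{item:prop:general radial weights2}, and the intertwining identity $L_{g_\zeta}f_\zeta=(L_gf)_\zeta$ yields $L_{g_\zeta}=D_\zeta L_g D_\zeta^{-1}$, so that $\|L_{g_\zeta}\|_{A^p_\om}=\|L_g\|_{A^p_\om}$. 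For interior $\lambda\in\D$, I would argue scalar-by-scalar: fix $f\in A^p_\om$ and $z\in\D$ and consider $u(\lambda):=L_{g_\lambda}f(z)$. Each single paraproduct $M_g$, $S_g$, $T_g$ depends polynomially on the Taylor coefficients of $g$, so the same is true for any $g$-operator; combined with the joint holomorphy of $g_\lambda(w)=\sum_k b_k\lambda^k w^k$ on $\{|\lambda w|<1\}$, this shows that $u$ is holomorphic on the disc $\{|\lambda|<1/|z|\}$, which strictly contains $\overline{\D}$. Hence $|u|^p$ is subharmonic in a neighborhood of $\overline{\D}$, and Poisson majorization gives
\[
|L_{g_\lambda}f(z)|^p\le\int_\T|L_{g_\zeta}f(z)|^p P_\lambda(\zeta)\,dm(\zeta),\qquad\lambda\in\D,
\]
where $P_\lambda$ is the Poisson kernel at $\lambda$. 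Multiplying by $\om(z)$, integrating over $\D$, and applying Fubini together with the boundary identity just established, the right-hand side is bounded by $\|L_g\|_{A^p_\om}^p\|f\|_{A^p_\om}^p$, which proves part~(a).

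For the second assertion, set $A:=\varliminf_{r\nearrow1}\|L_{g_r}\|_{A^p_\om}$ and take $r_n\nearrow1$ realizing the inferior limit. For each polynomial $f$, the locally uniform convergences $g_{r_n}\to g$ and $g_{r_n}'\to g'$ propagate through the paraproducts to give $L_{g_{r_n}}f\to L_gf$ uniformly on compact subsets of $\D$. Together with the uniform bound $\|L_{g_{r_n}}f\|_{A^p_\om}\le\|L_{g_{r_n}}\|_{A^p_\om}\|f\|_{A^p_\om}$, Fatou's lemma applied to $|L_{g_{r_n}}f|^p\om$ yields $\|L_gf\|_{A^p_\om}\le A\|f\|_{A^p_\om}$. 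Density of polynomials in $A^p_\om$ (Proposition~\ref{prop:general radial weights}\ref{item:prop:general radial weights2}) together with the pointwise estimate \eqref{eq:UC} then extend $L_g$ continuously to a bounded operator on $A^p_\om$ with $\|L_g\|_{A^p_\om}\le A$; combined with the reverse inequality $A\le\|L_g\|_{A^p_\om}$ coming from part~(a), the claimed equivalence $\|L_g\|_{A^p_\om}\simeq\varliminf_{r\nearrow1}\|L_{g_r}\|_{A^p_\om}$ follows.

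The main obstacle is the interior step of part~(a): the multilinearity in $g$ of composed paraproducts such as $S_g^m T_g^n$ forbids any naive averaging at the operator level (the scalar identity $g_\lambda=\int_\T g_\zeta P_\lambda(\zeta)\,dm(\zeta)$ is only linear in $g$, whereas each composition would need all of its factors averaged simultaneously against the same $\zeta$), and this is precisely what forces one to pass to the scalar quantity $u(\lambda)=L_{g_\lambda}f(z)$ before invoking Poisson. The key technical checkpoint is therefore the joint holomorphy statement past $\T$, which is what legitimizes the subharmonic majorization on $\overline{\D}$; the Fubini interchange is then harmless because the integrand is nonnegative, and the argument goes through uniformly in $0<p<\infty$ because it only involves the $p$-th power of the quasi-norm.
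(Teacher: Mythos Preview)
Your argument is correct and is precisely the natural route: the rotation invariance of the radial weight gives $\|L_{g_\zeta}\|_{A^p_\omega}=\|L_g\|_{A^p_\omega}$ on $\T$, the pointwise holomorphy of $\lambda\mapsto L_{g_\lambda}f(z)$ on $\{|\lambda|<1/|z|\}\supset\overline{\D}$ (which holds because every $g$-word evaluated at $z$ involves only values of $g_\lambda$ at points of modulus $\le|z|$) lets you Poisson-majorize $|L_{g_\lambda}f(z)|^p$ by its boundary values, and Fubini plus Fatou close the loop; this is exactly the scheme of \cite[Proposition~4.3]{Aleman:Cascante:Fabrega:Pascuas:Pelaez} to which the paper defers. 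Note that your argument actually yields the sharp inequality $\|L_{g_\lambda}\|_{A^p_\omega}\le\|L_g\|_{A^p_\omega}$ for $\lambda\in\overline{\D}$ and hence $\|L_g\|_{A^p_\omega}=\varliminf_{r\nearrow1}\|L_{g_r}\|_{A^p_\omega}$, slightly stronger than the $\lesssim$ and $\simeq$ in the stated proposition.
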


\subsection{Analytic tent spaces and Calder\'{o}n formula}
Let $\Gamma(\zeta)$ be the {\em Stolz region with vertex at $\zeta\in\T$} given by 
	\begin{equation*}
		\Gamma(\zeta):=\{z\in\D:|z-\zeta|<2(1-|z|)\},
	\end{equation*}
	and define
	$\Gamma(\zeta)
	:=|\zeta|\Gamma(\frac{\zeta}{|\zeta|})
	=\{z\in\D:|z-\zeta|<2(|\zeta|-|z|)\}$, 
	for $\zeta\in\D\setminus\{0\}$. 
	Then $AT^p_2(\omega)$ is the {\em analytic tent space} of all functions $f\in\H(\D)$ such that 
\begin{equation*}
	\|f\|^p_{AT^p_2(\omega)}:=\int_{\D}\biggl(
	\int_{\Gamma(\zeta)}|f|^2dA\biggr)^{\frac{p}2}
\omega(\zeta)\,\,dA(\zeta)<\infty,
\end{equation*}
and $AT^p_2(\omega,0):=AT^p_2(\omega)\cap\H_0(\D)$.
On the other hand, the {\em non-tangential maximal function} of  $\psi:\D\to\C$ is defined  by
\begin{equation*}
	\M\psi(\zeta):=\sup_{z\in\Gamma(\zeta)}|\psi(z)|
	\qquad(\zeta\in\D\setminus\{0\}).
\end{equation*}
Next result describes the properties of the analytic tent spaces that we need to prove our results.
\begin{proposition}\label{prop:Calderon:formula:tent:spaces}
Let $\omega$ be a radial weight and $0<p<\infty$. Then
\begin{enumerate}[label={\sf\alph*)}, topsep=3pt, leftmargin=*,itemsep=3pt] 
\item\label{item:Calderon:formula:tent:spaces1} The following Calder\'{o}n type formula holds
\begin{equation}\label{normacono}
    \|f\|_{A^p_\omega}^p\simeq \|f'\|^p_{AT^p_2(\omega)}+|f(0)|^p 
\qquad(f\in \H(\D)),
    \end{equation}
where the corresponding constants depend only on $p$ and $\om$.
\item\label{item:Calderon:formula:tent:spaces2} 
The non-tangential maximal operator $\M$ is bounded from $A^p_{\omega}$ to $L^p_{\omega}(\D)$.

\item\label{item:Calderon:formula:tent:spaces3} For any $0<r<1$, we have the estimate
\begin{equation}\label{eq:UC:ATp}
\sup_{|z|\le r}|f(z)|
\lesssim\|f\|_{AT^p_2(\omega)}
\qquad(f\in\H(\D)).
\end{equation}
As a consequence, the convergence in $AT^p_2(\omega)$ implies the uniform convergence on compacta, and so $AT^p_2(\omega)$ is a complete space.

\item\label{item:Calderon:formula:tent:spaces4} The operator $M_z$ is a topological isomorphism from $A^p_{\omega}$ onto $A^p_{\omega}(0)$ and from $AT^p_2(\omega)$ onto $AT^p_2(\omega,0)$.

\item\label{item:Calderon:formula:tent:spaces5} The following estimate holds  \begin{equation*}
	\|h_1h'_2\|_{AT^{p/2}_2(\omega)}
	\lesssim\|h_1\|_{A^p_{\omega}}\|h_2\|_{A^p_{\omega}}
	\qquad(h_1,h_2\in\H(\D)).
\end{equation*}

\end{enumerate}
\end{proposition}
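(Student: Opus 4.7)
My plan is to establish the five items in the order (b), (c), (a), (d), (e), using the non-tangential maximal function as the main bridge between the Bergman and tent-space norms.

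For (b), I would start from subharmonicity of $|f|^p$ on the Euclidean disc $D(z,(1-|z|)/4)$, which is contained in a slightly fattened Stolz region $\widetilde\Gamma(\zeta)$ whenever $z\in\Gamma(\zeta)$. This gives $(\M f)^p(\zeta)\lesssim (1-|\zeta|)^{-2}\int_{\widetilde\Gamma(\zeta)}|f|^p\,dA$, and a Fubini exchange reduces the estimate $\|\M f\|_{L^p_\omega}\lesssim \|f\|_{A^p_\omega}$ to bounding, for a radial weight, the dual integral
\[
\int_{\{\zeta\,:\,z\in\widetilde\Gamma(\zeta)\}}\frac{\omega(\zeta)}{(1-|\zeta|)^2}\,dA(\zeta),
\]
which is an elementary computation using $\widehat\omega$ and the geometry of the upward-pointing cone at $z$. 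For (c), fix $r<1$ and pick an annular sector $U$ close to the boundary, depending on $r$, such that $D(z,\delta)\subset\Gamma(\zeta)$ for every $|z|\le r$, every $\zeta\in U$, and some $\delta=\delta(r)>0$. Subharmonicity of $|f|^2$ on $D(z,\delta)$ yields $|f(z)|^2\lesssim \int_{\Gamma(\zeta)}|f|^2\,dA$ uniformly for $\zeta\in U$; raising to the power $p/2$ and integrating over $U$ against $\omega$, whose mass there is positive by $\widehat\omega>0$, gives (\ref{eq:UC:ATp}), and completeness of $AT^p_2(\omega)$ follows in the standard way from uniform convergence on compacta.

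The heart of the proposition is (a). Since no pointwise Littlewood--Paley formula is available for arbitrary radial $\omega$, I would derive (a) from the classical Calderón equivalence of the non-tangential maximal function $\M(f-f(0))$ and the area function $A(f')(\zeta):=(\int_{\Gamma(\zeta)}|f'|^2\,dA)^{1/2}$, adapted to the weighted setting. One direction uses the pointwise bound $|f(z)-f(0)|\lesssim A(f')(\zeta)$ for $z\in\Gamma(\zeta)$: writing $f(z)-f(0)=\int_0^z f'(\eta)\,d\eta$ and applying a Hardy-type estimate along a cone ray, the line integral of $|f'|$ is controlled by the area integral of $|f'|^2$ via Cauchy--Schwarz. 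The reverse inequality follows from a standard Green's identity on a truncated cone, which dominates $\int_{\Gamma(\zeta)}|f'|^2\,dA$ by $(\M(f-f(0)))^2(\zeta)$ up to a harmless boundary term. Combined with (b) and the trivial reverse bound $\|f\|_{A^p_\omega}\lesssim \|\M f\|_{L^p_\omega}$ (obtained by Fubini and observing that each $z\in\D$ lies in $\Gamma(\zeta)$ for a set of $\zeta$ whose $\omega$-mass is comparable to $\omega(z)(1-|z|)^2$), this gives $\|f\|_{A^p_\omega}^p\simeq |f(0)|^p+\|\M f\|_{L^p_\omega}^p\simeq |f(0)|^p+\|f'\|^p_{AT^p_2(\omega)}$.

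Parts (d) and (e) are quick consequences. For (d), $M_z$ is trivially a bounded injection into the subspace of functions vanishing at $0$, and its inverse $h\mapsto h/z$ on $\H_0(\D)$ is bounded via the pointwise estimate $|h(z)/z|\le 2|h(z)|$ for $|z|\ge 1/2$ together with the compact-set bound $\sup_{|z|\le 1/2}|h(z)/z|\lesssim\|h\|$ extracted from (\ref{eq:UC}) or (\ref{eq:UC:ATp}). For (e), Cauchy--Schwarz inside the cone gives $\int_{\Gamma(\zeta)}|h_1h_2'|^2\,dA\le (\M h_1)^2(\zeta)\int_{\Gamma(\zeta)}|h_2'|^2\,dA$; raising this to the power $p/4$ and applying Cauchy--Schwarz in the outer $\zeta$-integral against $\omega$, then invoking (a) and (b), yields
\[
\|h_1h_2'\|_{AT^{p/2}_2(\omega)}^{p/2}\le \|\M h_1\|_{L^p_\omega}^{p/2}\|h_2'\|_{AT^p_2(\omega)}^{p/2}\lesssim \|h_1\|_{A^p_\omega}^{p/2}\|h_2\|_{A^p_\omega}^{p/2}.
\]
The main obstacle is clearly (a): the absence of a pointwise Littlewood--Paley identity for arbitrary radial $\omega$ forces the Calderón-type comparison to go entirely through the non-tangential maximal function and the area function of $f'$, without the benefit of the polar-integration arguments available for standard weights, and additional care is needed in the range $0<p<1$, where the maximal-function estimates must be combined with pointwise / good-$\lambda$ techniques rather than duality.
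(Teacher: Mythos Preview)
Your approach to (b) has a genuine gap that propagates to (a). The Fubini argument requires the dual integral
\[
\int_{\{\zeta\,:\,w\in\widetilde\Gamma(\zeta)\}}\frac{\omega(\zeta)}{(1-|\zeta|)^2}\,dA(\zeta)
\]
to be controlled by $\omega(w)$, but this fails for arbitrary radial weights. Indeed, since $0\in\Gamma(\zeta)$ for every $\zeta\ne0$, the case $w=0$ already gives the full integral $\int_\D\omega(\zeta)(1-|\zeta|)^{-2}\,dA(\zeta)$, which diverges for the unweighted Bergman space and for any standard weight $(1-|z|)^\alpha$ with $\alpha\le1$. The problem is that replacing $(1-|z|)^{-2}$ by $(1-|\zeta|)^{-2}$ in the subharmonicity bound throws away all information about how far $z$ is from the boundary, and for a general radial weight there is no mechanism to recover it. Your sketched pointwise bound $|f(z)-f(0)|\lesssim A(f')(\zeta)$ in (a) is also not correct as stated: integrating $|f'|$ along the ray and passing to the area integral via subharmonicity leaves an unavoidable factor $\int_0^{|z|}(1-t)^{-1}\,dt=\log\frac1{1-|z|}$.

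The paper circumvents both obstacles by a single observation you have overlooked: the cones are defined so that $\Gamma(re^{i\theta})=r\,\Gamma(e^{i\theta})$, hence $\M f(re^{i\theta})=\M(f_r)(e^{i\theta})$ and $A(f')(re^{i\theta})=A((f_r)')(e^{i\theta})$, where the right-hand sides are the classical $H^p$ quantities on $\T$. One then applies the known Hardy-space inequalities (non-tangential maximal theorem for (b), Calder\'on's area theorem for (a)) on each circle of radius $r$ and integrates the resulting estimate against $r\omega(r)\,dr$. This slice-by-slice reduction uses no comparison of $\omega$ at different radii, which is exactly what makes it work for every radial weight. Your arguments for (c), (d) and (e) are essentially correct and close to the paper's, but they rest on (a) and (b), so the overall proof needs the dilation idea.
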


\begin{proof}
Part \ref{item:Calderon:formula:tent:spaces1}  follows by applying the classical Calder\'{o}n formula (see~{\cite[Thm.\ 3]{Calderon} or \cite[Thm.\ 7.4]{Pavlovic}} with $q=2$) to the dilated functions $f_r$, for $0<r<1$, and integrating the resulting estimate against $r\omega(r)\,dr$ along the unit interval 
$(0,1)$. Part \ref{item:Calderon:formula:tent:spaces2} is proved similarly using the boundedness of $\M$ from $H^p$ to $L^p(\T)$ (see \cite[Thm.\ II.3.1]{Garnett}).

Now let us prove \ref{item:Calderon:formula:tent:spaces3}. First note that \eqref{normacono} shows that
\begin{equation}\label{eq:Calderon}
\|f\|_{AT^p_2(\omega)}\simeq\|F\|_{A^p_{\omega}}\qquad(f\in\H(\D)),
\end{equation}
where $F(z)=\int_0^zf(\zeta)\,d\zeta$. This estimate together with Cauchy's formula, \eqref{eq:UC} and \eqref{eq:Calderon} proves \eqref{eq:UC:ATp} as follows:
\begin{equation*}
\sup_{|z|\le r}|f(z)|
\lesssim\sup_{|z|=\frac{1+r}2}|F(z)|
\lesssim\|f\|_{AT^p_2(\omega)}
\qquad(f\in\H(\D)).
\end{equation*}

We next prove \ref{item:Calderon:formula:tent:spaces4}. Let $X$ be either $A^p_{\omega}$ or $AT^p_2(\omega)$. Let $X(0)=A^p_{\omega}(0)$, in the first case, and $X(0)=AT^p_2(\omega,0)$, in the second case.
Recall that  $M_z$ is an algebraic isomorphism from $\H(\D)$ onto $\H_0(\D)$, and 
\begin{equation*}
(M_z^{-1}h_0)(z)=\frac{h_0(z)}z=\int_0^1 h_0'(tz)\,dt\qquad(h_0\in\H_0(\D)).
\end{equation*}
Thus, since $M_z$ is bounded on $X$, we only have to prove that $M_z^{-1}$ is bounded on $X(0)$, that is,
\begin{equation}\label{eqn:prop:top:isom:Apomega:Apomega(0)}
\|h\|_{X}\lesssim\|h_0\|_{X}\qquad(h_0\in\H_0(\D)),
\end{equation}
where $h=M_z^{-1}h_0$. First observe  that
\begin{equation*}
|h(z)|=\frac{|h_0(z)|}{|z|}\le2|h_0(z)|\qquad(\tfrac12\le|z|<1).
\end{equation*}
On the other hand, Cauchy's formula and either \eqref{eq:UC} or \eqref{eq:UC:ATp} give that
\begin{equation*}
\sup_{|z|<\frac12}|h(z)|\lesssim\sup_{|z|<\frac12}|h'_0(z)|\lesssim\sup_{|z|=\frac34}|h_0(z)|\lesssim\|h_0\|_{X}\qquad(h_0\in\H_0(\D)).
\end{equation*}
Therefore 
\begin{equation*}
|h|\lesssim\|h_0\|_{X}\mathbf{1}_{D(0,\frac12)}+|h_0|\mathbf{1}_{\D\setminus\D(0,\frac12)}\qquad(h_0\in\H_0(\D)),
\end{equation*}
where $D(0,\frac12)=\{z\in\D:|z|<\frac12\}$ and  $\mathbf{1}_A$ denotes the indicator or characteristic function of the set $A$. Hence \eqref{eqn:prop:top:isom:Apomega:Apomega(0)} directly follows from this estimate.

Finally, \ref{item:Calderon:formula:tent:spaces5} is proved  using Schwarz inequality,  \ref{item:Calderon:formula:tent:spaces2} and \eqref{normacono} as follows:
\begin{align*}
\|h_1h'_2\|_{T^{p/2}_2(\omega)}
&\le \biggl\{\int_{\D}\biggl(\int_{\Gamma(\zeta)}|h_2'|^2dA\biggr)^{\frac{p}4}(\M h_1(\zeta))^{\frac{p}2}\omega(\zeta)\,dA(\zeta)\biggr\}^{\frac2p}
\\
&\le\|\M h_1\|_{L^p_{\omega}}\|h'_2\|_{T^{p}_2(\omega)} 
\lesssim\|h_1\|_{A^p_{\omega}}\|h_2\|_{A^p_{\omega}}.\qedhere
\end{align*}
\end{proof}

 \section{Proof of  Theorem \ref{thm:compo}}\label{sect:proof:thm11}
 
In order to prove Theorem \ref{thm:compo} we need the following proposition.

\begin{proposition}\label{prop:compo}
Let $\omega$ be a radial weight. Then:
\begin{enumerate}[label={\sf\alph*)}, topsep=3pt, leftmargin=*,itemsep=3pt] 
\item \label{item:compo1} If $T_g\in \BB(A^p_\omega)$, we have that  $\|T^n_g\|_{A^p_\omega}\simeq \|T_g\|_{A^p_\omega}^n$, for any $n\in\N$.
\item \label{item:compo2} 
For any $n\in \N$, $\|T^n_g\|_{A^p_\omega}\simeq \|T^n_g\|_{A^p_\omega(0)}$.
\item \label{item:compo3} 
If $P(T_g)\in\BB(A^p_{\omega}(0))$, for some polynomial $P$ of positive degree,
then $T_g\in\BB(A^p_{\omega})$.
\end{enumerate}
\end{proposition}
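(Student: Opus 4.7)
My plan is to establish the three parts in the order (b), (a), (c), since (c) will be deduced from (a) combined with the dilation principle of Proposition~\ref{prop:norm:dilations}, while (b) serves as a subspace-to-full-space decomposition used in both other parts.

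For (b), the bound $\|T_g^n\|_{A^p_\omega(0)}\le \|T_g^n\|_{A^p_\omega}$ is immediate from restriction to a subspace. For the reverse, split any $f\in A^p_\omega$ as $f=f(0)+h$ with $h=f-f(0)\in A^p_\omega(0)$; \eqref{eq:UC} gives $|f(0)|\lesssim\|f\|_{A^p_\omega}$ and hence $\|h\|_{A^p_\omega}\lesssim\|f\|_{A^p_\omega}$. Since $T_g^n f=f(0)\cdot\tfrac{1}{n!}(g-g(0))^n+T_g^n h$, the substantive point is to control $\|T_g^n(1)\|_{A^p_\omega}$ by $\|T_g^n\|_{A^p_\omega(0)}$. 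I would use that $T_g^n(g-g(0))=T_g^{n+1}(1)=\tfrac{1}{(n+1)!}(g-g(0))^{n+1}$ with $g-g(0)\in A^p_\omega(0)$ (reducing to $g_r$ in place of $g$ via Proposition~\ref{prop:norm:dilations} if necessary), combined with the H\"older comparison $\|\phi\|_{A^p_\omega}\lesssim\|\phi\|_{A^{np}_\omega}$ coming from the integrability of $\omega$, to extract $\|T_g^n(1)\|_{A^p_\omega}\lesssim\|T_g^n\|_{A^p_\omega(0)}$ up to constants depending on $n,p,\omega$.

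For (a), the upper bound $\|T_g^n\|_{A^p_\omega}\le\|T_g\|_{A^p_\omega}^n$ is immediate by iteration. For the lower bound $\|T_g\|_{A^p_\omega}^n\lesssim\|T_g^n\|_{A^p_\omega}$ I would induct on $n$, using the identity $(T_g^n f)'=g'(T_g^{n-1}f)$ with $T_g^n f(0)=0$ and the Calder\'on-type formula of Proposition~\ref{prop:Calderon:formula:tent:spaces}, which yields $\|T_g^n f\|_{A^p_\omega}^p\simeq\|g'(T_g^{n-1}f)\|_{AT^p_2(\omega)}^p$. Combined with the non-tangential maximal function bound from the same proposition and a careful choice of near-extremal test functions for $T_g$, this should propagate the lower bound from power $n-1$ to power $n$.

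For (c), Proposition~\ref{prop:norm:dilations} together with (b) yields $\sup_{r<1}\|P(T_{g_r})\|_{A^p_\omega}\lesssim\|P(T_g)\|_{A^p_\omega(0)}<\infty$. Since $g_r\in\H(\overline{\D})$ for each $r<1$, $T_{g_r}$ is bounded on $A^p_\omega$, so (a) applied to $g_r$ yields $\|T_{g_r}^k\|_{A^p_\omega}\gtrsim c_k\|T_{g_r}\|_{A^p_\omega}^k$ for each $k$. Writing $P(T_{g_r})=\sum_{k=0}^N a_k T_{g_r}^k$ with $a_N\neq 0$, the triangle inequality gives
\[
c_N|a_N|\,\|T_{g_r}\|_{A^p_\omega}^N\le \|P(T_{g_r})\|_{A^p_\omega}+\sum_{k<N}|a_k|\,\|T_{g_r}\|_{A^p_\omega}^k,
\]
which, since the left-hand side grows faster than the right, forces $\sup_{r<1}\|T_{g_r}\|_{A^p_\omega}<\infty$; Proposition~\ref{prop:norm:dilations} then concludes $T_g\in\BB(A^p_\omega)$. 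The main obstacle is the lower bound in (a): because Volterra-type operators are typically quasinilpotent, $\|T_g^n\|^{1/n}\to 0$, so the equivalence $\|T_g^n\|\simeq\|T_g\|^n$ (with constants depending on $n$) is not a Banach-algebra consequence and the argument must exploit the specific integral structure of $T_g$ through the tent-space Calder\'on formula.
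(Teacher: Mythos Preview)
Your arguments for parts (b) and (c) are essentially the same as the paper's. The split $f=f(0)+h$, the identification $T_g^n 1=\tfrac{1}{n!}(g-g(0))^n$ and $T_g^n(g-g(0))=\tfrac{1}{(n+1)!}(g-g(0))^{n+1}$, and the H\"older chain to bound $\|T_g^n 1\|_{A^p_\omega}$ by $\|T_g^n\|_{A^p_\omega(0)}$ are exactly what the paper does; so is the polynomial/dilation argument for (c).

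The gap is in part (a). From the Calder\'on formula you correctly get $\|T_g^n f\|_{A^p_\omega}\simeq\|g'\,T_g^{n-1}f\|_{AT^p_2(\omega)}$, but this expresses $\|T_g^n f\|$ as $\|T_g(T_g^{n-1}f)\|$, and there is no mechanism forcing a near-extremal $f$ for $T_g$ (or for $T_g^{n-1}$) to remain near-extremal after applying $T_g^{n-1}$ (or $T_g$). The phrase ``careful choice of near-extremal test functions'' hides the real difficulty: you would need a single $f$ that is simultaneously near-extremal at every stage of the iteration, and nothing in your outline produces one. As you yourself note, quasinilpotency rules out any soft argument here.

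The paper bypasses this by proving a \emph{pointwise-in-$f$} log-convexity inequality:
\[
\|T_g^n f\|_{A^p_\omega}^2\lesssim \|T_g^{n+1}f\|_{A^p_\omega}\,\|T_g^{n-1}f\|_{A^p_\omega}\qquad(f\in\H(\D)).
\]
The key identity is $[(T_g^n f)^2]'=2\,(T_g^{n-1}f)\,(T_g^{n+1}f)'$, so that
\[
\|T_g^n f\|_{A^p_\omega}^2=\|(T_g^n f)^2\|_{A^{p/2}_\omega}\simeq\|(T_g^{n-1}f)\,(T_g^{n+1}f)'\|_{AT^{p/2}_2(\omega)}
\lesssim\|T_g^{n-1}f\|_{A^p_\omega}\,\|T_g^{n+1}f\|_{A^p_\omega},
\]
using Proposition~\ref{prop:Calderon:formula:tent:spaces}\,\ref{item:Calderon:formula:tent:spaces5}. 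Induction on $n$ then gives $\|T_g^n f\|_{A^p_\omega}^{1/n}\lesssim\|T_g^{n+1}f\|_{A^p_\omega}^{1/(n+1)}$ for $\|f\|_{A^p_\omega}=1$, from which the lower bound $\|T_g\|^n\lesssim\|T_g^n\|$ follows immediately (take the supremum over $f$). This quadratic trick is the specific ``integral structure'' you allude to but do not supply.
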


Part~{\ref{item:compo1}} of Proposition~{\ref{prop:compo}} is a direct consequence of the following lemma, Proposition~\ref{prop:norm:dilations} and Proposition~\ref{prop:general radial weights} \ref{item:prop:general radial weights2}.

\begin{lemma}
Let $\omega$ be a radial weight, $0<p<\infty$, and $n\in\N$.
Then
\begin{equation}
\label{eqn:lem:T_g-powers:1}
\|T^n_gf\|^2_{A^p_{\omega}}
\lesssim\|T^{n+1}_gf\|_{A^p_{\omega}}\,\|T^{n-1}_gf\|_{A^p_{\omega}}
\quad(f,g\in\H(\D)).
\end{equation} 
Moreover, when $T_g\in\BB(A^p_{\omega})$ we have that
\begin{equation}
\label{eqn:lem:T_g-powers:2}
\|T^n_gf\|^{\frac1n}_{A^p_{\omega}}
\lesssim \|T^{n+1}_gf\|_{A^p_{\omega}}^{\frac1{n+1}}
\qquad(g,f \in\H(\D),\,\|f\|_{A^p_{\omega}}=1).
\end{equation} 
Here, as usual, $T_g^0f=f$, for $g,f\in \H(\D)$.
\end{lemma}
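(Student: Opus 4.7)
The key observation is the algebraic identity
\[
\bigl((T_g^nf)^2\bigr)'=2\,T_g^nf\cdot(T_g^nf)'=2\,T_g^{n-1}f\cdot T_g^nf\cdot g'=2\,T_g^{n-1}f\cdot(T_g^{n+1}f)',
\]
obtained by applying the rule $(T_g^kf)'=T_g^{k-1}f\cdot g'$ twice. This is what naturally couples the two neighbouring iterates $T_g^{n-1}f$ and $T_g^{n+1}f$. Hence my plan for \eqref{eqn:lem:T_g-powers:1} is to begin from the trivial identity $\|T_g^nf\|^2_{A^p_\omega}=\|(T_g^nf)^2\|_{A^{p/2}_\omega}$ and then combine the Calder\'on formula of Proposition~\ref{prop:Calderon:formula:tent:spaces}\ref{item:Calderon:formula:tent:spaces1} (applied with exponent $p/2$) with the Schwarz-type tent-space estimate of Proposition~\ref{prop:Calderon:formula:tent:spaces}\ref{item:Calderon:formula:tent:spaces5}.

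More precisely, since $n\ge 1$ forces $(T_g^nf)^2$ to vanish at the origin, the Calder\'on formula yields
\[
\|T_g^nf\|^2_{A^p_\omega}\simeq\bigl\|\bigl((T_g^nf)^2\bigr)'\bigr\|_{AT^{p/2}_2(\omega)}\simeq\|T_g^{n-1}f\cdot(T_g^{n+1}f)'\|_{AT^{p/2}_2(\omega)},
\]
where the second equivalence uses the displayed identity. Applying Proposition~\ref{prop:Calderon:formula:tent:spaces}\ref{item:Calderon:formula:tent:spaces5} with $h_1=T_g^{n-1}f$ and $h_2=T_g^{n+1}f$ bounds the right-hand side by $\|T_g^{n-1}f\|_{A^p_\omega}\|T_g^{n+1}f\|_{A^p_\omega}$, giving \eqref{eqn:lem:T_g-powers:1}. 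If any iterate has infinite $A^p_\omega$-norm the inequality is trivial via the norm equivalences, and the degenerate situation $g$ constant forces $T_g^k f\equiv 0$ for every $k\ge1$, so both sides vanish.

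To derive \eqref{eqn:lem:T_g-powers:2} from \eqref{eqn:lem:T_g-powers:1} I would argue by induction on $n$. Writing $a_k:=\|T_g^kf\|_{A^p_\omega}$, so that $a_0=1$ and every $a_k$ is finite under the hypothesis $T_g\in\BB(A^p_\omega)$, the base case $a_1^2\lesssim a_0a_2=a_2$ is immediate from \eqref{eqn:lem:T_g-powers:1}. For the step, raising $a_{n+1}^2\lesssim a_n a_{n+2}$ to the power $n+1$ and substituting the inductive hypothesis $a_n^{n+1}\lesssim a_{n+1}^n$ yields $a_{n+1}^{2(n+1)}\lesssim a_{n+1}^na_{n+2}^{n+1}$, whence $a_{n+1}^{n+2}\lesssim a_{n+2}^{n+1}$ after dividing by $a_{n+1}^n$ (or trivially if $a_{n+1}=0$); this is exactly \eqref{eqn:lem:T_g-powers:2} at level $n+1$. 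The only real obstacle is the one resolved at the outset: spotting that $T_g^{n-1}f\cdot (T_g^{n+1}f)'$ can be identified with $\tfrac12((T_g^nf)^2)'$, which is what turns the single product-rule bound in Proposition~\ref{prop:Calderon:formula:tent:spaces}\ref{item:Calderon:formula:tent:spaces5} into the log-convexity statement for the sequence $(a_n)$.
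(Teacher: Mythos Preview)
Your proof is correct and follows essentially the same route as the paper's: the identity $\bigl((T_g^nf)^2\bigr)'=2\,T_g^{n-1}f\cdot(T_g^{n+1}f)'$ combined with the Calder\'on formula and Proposition~\ref{prop:Calderon:formula:tent:spaces}\ref{item:Calderon:formula:tent:spaces5} gives \eqref{eqn:lem:T_g-powers:1}, and the induction for \eqref{eqn:lem:T_g-powers:2} is the same log-convexity argument (the paper indexes the step as $n-1\to n$ rather than $n\to n+1$, but the computation is identical).
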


\begin{proof}
By \eqref{normacono},
\begin{equation*}
	\|T^n_gf\|^2_{A^p_{\omega}}
	=\|(T^n_gf)^2\|_{A^{p/2}_{\omega}}
	\simeq\|[(T^n_gf)^2]'\|_{AT^{p/2}_2(\omega)}.
\end{equation*}
Since $[(T^n_gf)^2]'=2(T^{n-1}_gf)\bigl(T^{n+1}_gf\bigr)'$,
estimate~{\eqref{eqn:lem:T_g-powers:1}} follows from  Proposition~{\ref{prop:Calderon:formula:tent:spaces} \ref{item:Calderon:formula:tent:spaces5}}.

 The proof of~{\eqref{eqn:lem:T_g-powers:2}} is done by induction on $n$.
For $n=1$ it is equivalent to~{\eqref{eqn:lem:T_g-powers:1}}. Now let $n>1$. Then the induction hypothesis gives that
\begin{equation*}
\|T^{n-1}_gf\|_{A^p_{\omega}}
\lesssim\|T^n_gf\|^{\frac{n-1}n}_{A^p_{\omega}}
\qquad(g,f\in\H(\D),\,\|f\|_{A^p_{\omega}}=1).
\end{equation*}
This estimate and~{\eqref{eqn:lem:T_g-powers:1}} show that 
\begin{equation*}
\|T^n_gf\|^2_{A^p_{\omega}}
\lesssim\|T^{n+1}_gf\|_{A^p_{\omega}}\, \|T^n_gf\|^{\frac{n-1}n}_{A^p_{\omega}}
\qquad(g,f\in\H(\D),\,\|f\|_{A^p_{\omega}}=1),
\end{equation*}
which is equivalent to~{\eqref{eqn:lem:T_g-powers:2}} since $T_g\in\BB(A^p_{\omega})$, and the proof is complete.
\end{proof}

Next we prove part~{\ref{item:compo2}} of Proposition~{\ref{prop:compo}}.
	Since $\|T_g^n\|_{A_\omega^p(0)}\leq \|T_g^n\|_{A_\omega^p}$, we only have to prove that $\|T_g^n\|_{A_\omega^p}\lesssim \|T_g^n\|_{A_\omega^p(0)}$ and we may assume, as usual, that $g\in\H(\overline{\D})$. Since the pointwise evaluations are bounded on $A^p_{\omega}$ (by Proposition \ref{prop:general radial weights} \ref{item:prop:general radial weights1}), $\Pi_0 f=f-f(0)$ defines a bounded operator on $A_\omega^p$, and so  
$f_0=\Pi_0 f\in A^p_{\omega}(0)$ satisfies that	$\|f_0\|_{A_\omega^p}\lesssim\|f\|_{A_\omega^p}$. Therefore
	\begin{align*}
		\|T^n_gf\|_{A_\omega^p}
		&\lesssim
		(\|T^n_gf_0\|_{A_\omega^p}+|f(0)|\,\|T^n_g1\|_{A_\omega^p})\\
		&\lesssim
		(\|T^n_g\|_{A_\omega^p(0)}\,\|f_0\|_{A_\omega^p}
		+\|T^n_g1\|_{A_\omega^p}\,\|f\|_{A_\omega^p})\\
		&\lesssim
		(\|T^n_g\|_{A_\omega^p(0)}
		+\|T^n_g1\|_{A_\omega^p})\,\|f\|_{A_\omega^p},     
	\end{align*}
	and hence
	$\|T^n_g\|_{A_\omega^p}
		\lesssim
		(\|T^n_g\|_{A_\omega^p(0)}
		+\|T^n_g1\|_{A_\omega^p})$.
		
	Now we want to estimate $\|T^n_g1\|_{A_\omega^p}$ by $\|T^n_g\|_{A_\omega^p(0)}$.
	Since 
	$T^n_g1=\frac{g_0^n}{n!}$ and $T^n_gg_0=\frac{g_0^{n+1}}{(n+1)!}$, where $g_0=\Pi_0 g$  (see \cite[Lemma 3.12]{Aleman:Cascante:Fabrega:Pascuas:Pelaez2}),  H\"older's inequality shows that
	\[
	\|T^n_g1\|_{A_\omega^p}\simeq \|g_0^n\|_{A_\omega^p}
	\lesssim \|g_0^{n+1}\|_{A_\omega^p}^{n/(n+1)}
	\simeq \|T^n_gg_0\|_{A_\omega^p}^{n/(n+1)}.
	\]
	Since $g\in\H(\overline{\D})$, it is clear that 
	$g_0\in A^p_{\omega}(0)$ and we get that 
	\begin{equation*}
		\|T^n_g1\|_{A_\omega^p}\lesssim
		\|T^n_g\|_{A_\omega^p(0)}^{n/(n+1)}\|g_0\|_{A_\omega^p}^{n/(n+1)}.  
	\end{equation*}
By 	H\"older's inequality,
	$\|g_0\|_{A_\omega^p}^{n+1}
	\lesssim \|g_0^{n+1}\|_{A_\omega^p}
	\simeq \|T^n_gg_0\|_{A_\omega^p}\le\|T^n_g\|_{A_\omega^p(0)}\|g_0\|_{A_\omega^p}$,
	which implies that
	$
\|g_0\|_{A_\omega^p}^n\lesssim \|T^n_g\|_{A_\omega^p(0)}$. And that ends the proof of part~{\ref{item:compo2}}.

Finally, we prove part~{\ref{item:compo3}} of Proposition~{\ref{prop:compo}}.
Let  $P(z)=\sum_{k=0}^Na_kz^k$, where $N>0$ and $a_N\ne0$. Then  parts \ref{item:compo1} and \ref{item:compo2}  and Proposition~{\ref{prop:norm:dilations}} give that
\[
c\,|a_N|\|T_{g_r}\|^N_{A_\omega^p(0)}
-\sum_{k=0}^{N-1}|a_k|\|T_{g_r}\|_{A_\omega^p(0)}^k
\le \|P(T_{g_r})\|_{A_\omega^p(0)}\lesssim\|P(T_{g})\|_{A_\omega^p(0)} ,
\]
where $c$ is a positive constant only depending on $N$, $p$, and $\omega$.
 Therefore $\sup_r\|T_{g_r}\|_{A_\omega^p(0)}<\infty$, and using again Proposition~{\ref{prop:norm:dilations}} and parts~{\ref{item:compo1} and \ref{item:compo2}} we conclude that $T_{g}\in\BB(A_\omega^p)$.

\begin{proof}[{\bf Proof of Theorem \ref{thm:compo}}]
It follows the ideas of the proof of~{\cite[Theorem 1.1 (b)]{Aleman:Cascante:Fabrega:Pascuas:Pelaez}.}
Let $L_g$ be a non-trivial $g$-operator which is bounded on $A^p_{\omega}(0)$. 
Then the $ST$-representation \eqref{eq:STrepre} allow us to write $L_g$ as 
\[
L_g=\sum_{k=0}^nS_g^kP_k(T_g)\quad\mbox{ on $\H_0(\D)$},
\] 
where $P_0,\dots,P_n$ are one variable polynomials, and $P_n\ne0$ has degree $m$. Moreover, if $n=0$ then 
$P_0$ has positive degree. In this case,  $L_g=P_0(T_g)$ on $\H_0(\D)$, so $P_0(T_g)\in\BB(A^p_{\omega}(0))$, and therefore Proposition~{\ref{prop:compo} \ref{item:compo3}} gives that $T_{g}\in\BB(A_\omega^p)$. In order to deal with the case $n>0$,
we will use iterated commutators and their main properties as stated 
in \cite[Section~4]{Aleman:Cascante:Fabrega:Pascuas:Pelaez}. Since $P_k(T_{g_r})$ commute  with $T_{g_r}$, we have
\begin{equation*}
	[L_{g_r},T_{g_r}]_n
	= n!\,T_{g_r}^{2n}P_n(T_{g_r})= Q_n(T_{g_r})
\quad\mbox{on $\H_0(\D)$,}
\end{equation*}
where $Q_n$ is a one variable polynomial of degree $N=2n+m>n$,
 so Proposition~{\ref{prop:norm:dilations}} and the binomial formula for iterated commutators implies that
\[
\|Q_n(T_{g_r})\|_{A_\omega^p(0)}=
\|[L_{g_r},T_{g_r}]_n\|_{A_\omega^p(0)}
\lesssim\|L_g\|_{A_\omega^p(0)}\|T_{g_r}\|_{A_\omega^p(0)}^n.
\]

On the other hand,  since $Q_n(z)=\sum_{k=0}^Na_kz^k$, where $a_k\in\C$, taking into account Proposition~{\ref{prop:compo} \ref{item:compo1}-\ref{item:compo2}}, we have 
\[
c\,|a_N|\|T_{g_r}\|^N_{A_\omega^p(0)}
-\sum_{k=0}^{N-1}|a_k|\|T_{g_r}\|_{A_\omega^p(0)}^k
\le \|Q_n(T_{g_r})\|_{A_\omega^p(0)},
\]
where $c$ is a positive constant only depending on $N$, $p$, and $\omega$.
It follows that $\sup_r\|T_{g_r}\|_{A_\omega^p(0)}<\infty$, and so Proposition~{\ref{prop:norm:dilations}} and Proposition~{\ref{prop:compo} \ref{item:compo2}}  show that $T_{g}\in\BB(A_\omega^p)$.
\end{proof}

Observe that, if $T_g$ is bounded on $A^p_{\omega}$, then  $n!\,T_g^n 1=(g-g(0))^n\in A^p_\omega(0)$, for any $n\in\N$, so the operator $P_{N+2}(g-g(0),g(0))\delta_0$, which appears in \eqref{eq:STrepre}, is bounded on $A^p_\omega$. This observation together with Theorem \ref{thm:compo} implies the following
\begin{corollary}
	Any non-trivial $g$-operator $L_g$ is bounded on $A^p_\omega$ if and only if  it is bounded on $A^p_\omega(0)$.
\end{corollary}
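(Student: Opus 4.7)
The first direction is a one-line restriction argument: for every $f \in A^p_\omega(0)$ the inequality $\|L_g f\|_{A^p_\omega} \le \|L_g\|_{A^p_\omega}\,\|f\|_{A^p_\omega}$ is inherited from the $A^p_\omega$-bound, so $\|L_g\|_{A^p_\omega(0)} \le \|L_g\|_{A^p_\omega}$. The substantive content is the converse, which I would prove as follows.

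Given $f \in A^p_\omega$, split it as $f = \Pi_0 f + f(0)$, where $\Pi_0 f = f - f(0)$. Because pointwise evaluation at the origin is bounded on $A^p_\omega$ by Proposition~\ref{prop:general radial weights}~\ref{item:prop:general radial weights1}, the projection $\Pi_0$ is bounded on $A^p_\omega$ and maps into $A^p_\omega(0)$. Therefore
\begin{equation*}
L_g f = L_g(\Pi_0 f) + f(0)\, L_g 1,
\end{equation*}
and the standing hypothesis controls the first summand by $\|L_g\|_{A^p_\omega(0)}\,\|\Pi_0 f\|_{A^p_\omega} \lesssim \|L_g\|_{A^p_\omega(0)}\,\|f\|_{A^p_\omega}$. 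Since $|f(0)|\lesssim \|f\|_{A^p_\omega}$, the second summand is controlled precisely when $L_g 1 \in A^p_\omega$, which is the remaining point to verify.

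For that, I would apply the $ST$-representation \eqref{eq:STrepre}, which gives $L_g 1 = P_{N+2}(g - g(0), g(0))$, a finite linear combination of powers $(g - g(0))^k$ with scalar coefficients depending on the number $g(0)$. Since $L_g$ is non-trivial and bounded on $A^p_\omega(0)$, Theorem~\ref{thm:compo} yields $T_g \in \BB(A^p_\omega)$; iterating this boundedness gives $k!\, T_g^k 1 = (g - g(0))^k \in A^p_\omega$ for every $k \in \N$, and hence $L_g 1 \in A^p_\omega$. The only genuinely nontrivial step is the invocation of Theorem~\ref{thm:compo}; everything else is bookkeeping for the rank-one perturbation that distinguishes the action on $A^p_\omega$ from that on $A^p_\omega(0)$.
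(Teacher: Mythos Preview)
Your proof is correct and follows essentially the same approach as the paper: the paper's argument (stated tersely just before the corollary) also invokes Theorem~\ref{thm:compo} to obtain $T_g\in\BB(A^p_\omega)$, then uses $n!\,T_g^n 1=(g-g(0))^n\in A^p_\omega$ to conclude that the rank-one piece $P_{N+2}(g-g(0),g(0))\,\delta_0$ from the $ST$-representation~\eqref{eq:STrepre} is bounded on $A^p_\omega$. Your write-up just makes the splitting $L_g f = L_g(\Pi_0 f)+f(0)\,L_g1$ and the control of each summand more explicit.
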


\section{Proofs of Theorems \ref{thm:compoq} and \ref{thm:compoq:bis}}
\label{sect:proof:thm13-15:prop16}
\subsection{Proof of Theorem \ref{thm:compoq} \ref{item:compo1}}	
Assume that $L_g\in\BB(A^p_{\omega}(0))$. Then, by Theorem \ref{thm:compo}, $T_g\in \BB(A_\omega^p)$. Since	$[L_g,T_g]_m= m!\,T_g^N$	on $\H_0(\D)$, where $N=2m+n$,
it follows from  Theorem \ref{thm:compo} that 
	\[
		\|T_g\|_{A_\omega^p(0)}^N
		\lesssim \|[L_g,T_g]_m\|_{A_\omega^p(0)} \lesssim \|L_g\|_{A_\omega^p(0)}\|T_g\|_{A_\omega^p(0)}^m,
\]
so $\|T_g\|_{A_\omega^p(0)}^{n+m}\lesssim\|L_g\|_{A_\omega^p(0)}$. If $L_g\in\BB(A^p_{\omega})$ then the above estimate together with Theorem \ref{thm:compo} and Proposition~{\ref{prop:compo} \ref{item:compo2}} give \[
\|T_g\|_{A_\omega^p}^{n+m}\lesssim\|T_g\|_{A_\omega^p(0)}^{n+m}\lesssim\|L_g\|_{A_\omega^p(0)}\le\|L_g\|_{A_\omega^p}.
\]

\subsection{Proof of Theorem \ref{thm:compoq} \ref{item:compo2}} 
We need the following simple lemma.

\begin{lemma}\label{lemma:boundedness:Mg:Sg:Apomega}
Let $\omega$ be a radial weight and $0<p<\infty$. Then:
\begin{enumerate}[label={\sf\alph*)}, topsep=3pt, leftmargin=*,itemsep=3pt] 
\item \label{item:lemma:boundedness:Mg:Sg:Apomega1}
$M_g\in\BB(A^p_{\omega})$ if and only if  $g\in H^{\infty}$, and  
$\|M_g\|_{A^p_{\omega}}\simeq\|g\|_{H^{\infty}}$.

\item  \label{item:lemma:boundedness:Mg:Sg:Apomega2}
 $S_g\in\BB(A^p_{\omega})$ if and only if  $g\in H^{\infty}$, and  
$\|S_g\|_{A^p_{\omega}}\simeq\|g\|_{H^{\infty}}$.
\end{enumerate}
In particular, if $g\in H^{\infty}$ then $T_g\in\BB(A^p_{\omega})$ and 
$\|T_g\|_{A_\omega^p}\lesssim\|g\|_{H^{\infty}}$.
\end{lemma}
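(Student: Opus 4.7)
The two parts of the lemma admit parallel ``iterate and take $n$-th roots'' arguments for necessity, while the sufficiency parts are more direct. For part \ref{item:lemma:boundedness:Mg:Sg:Apomega1}, the ``if'' direction follows immediately from the pointwise bound $|M_g f|\le\|g\|_{H^\infty}|f|$, which yields $\|M_g\|_{A^p_\omega}\le\|g\|_{H^\infty}$. For the ``only if'' direction, I would apply $M_g$ iteratively to the constant function $1$ (which lies in $A^p_\omega$ because $\omega$ is integrable) to obtain $M_g^n 1=g^n$, hence $\|g^n\|_{A^p_\omega}\le\|M_g\|^n_{A^p_\omega}\|1\|_{A^p_\omega}$. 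Feeding this into the pointwise estimate \eqref{eq:UC} applied to $g^n$ gives, for any $0<r<1$,
\[
\sup_{|z|\le r}|g(z)|^{np}\lesssim\frac{\|M_g\|^{np}_{A^p_\omega}\|1\|^p_{A^p_\omega}}{(1-r)\widehat{\omega}\bigl(\tfrac{1+r}{2}\bigr)}.
\]
Taking $n$-th roots and letting $n\to\infty$ absorbs the $r$-dependent constant, so $\sup_{|z|\le r}|g(z)|\le\|M_g\|_{A^p_\omega}$; then $r\nearrow 1$ yields $\|g\|_{H^\infty}\le\|M_g\|_{A^p_\omega}$.

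For part \ref{item:lemma:boundedness:Mg:Sg:Apomega2}, I would prove the ``if'' direction using the Calder\'on formula \eqref{normacono}: since $S_g f(0)=0$ and $(S_g f)'=gf'$, the pointwise bound $|gf'|\le\|g\|_{H^\infty}|f'|$ inside each Stolz region gives
\[
\|S_g f\|^p_{A^p_\omega}\simeq\|gf'\|^p_{AT^p_2(\omega)}\le\|g\|^p_{H^\infty}\|f'\|^p_{AT^p_2(\omega)}\simeq\|g\|^p_{H^\infty}\|f\|^p_{A^p_\omega}.
\]
For the ``only if'' direction I would iterate $S_g$ on the identity function $z\in A^p_\omega$: an easy induction using $(S_g h)'=gh'$ shows that $(S_g^n z)'=g^n$ for $n\ge 1$ and $S_g^n z\in\H_0(\D)$. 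Cauchy's formula on the circle $|w|=\tfrac{1+r}{2}$ combined with \eqref{eq:UC} then gives
\[
\sup_{|z|\le r}|g(z)|^n=\sup_{|z|\le r}|(S_g^n z)'(z)|\lesssim C(r)\,\|S_g^n z\|_{A^p_\omega}\le C(r)\,\|S_g\|^n_{A^p_\omega}\|z\|_{A^p_\omega},
\]
and the $n$-th root followed by $r\nearrow 1$ limit yields $\|g\|_{H^\infty}\le\|S_g\|_{A^p_\omega}$.

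Finally, the ``In particular'' statement on $T_g$ is obtained by integrating $(fg)'=f'g+fg'$ from $0$ to $z$, producing the identity $M_g f=S_g f+T_g f+f(0)g(0)\,\mathbf{1}$, where $\mathbf{1}$ denotes the constant function one. Hence $T_g=M_g-S_g-E$ with $E\colon f\mapsto f(0)g(0)\,\mathbf{1}$, and the three operators on the right have norms bounded by a constant times $\|g\|_{H^\infty}$: the first two by the sufficiency parts already proved, and $E$ by the boundedness of evaluation at $0$ from \eqref{eq:UC} together with $\mathbf{1}\in A^p_\omega$. The main technical point throughout is the iteration-and-take-$n$-th-roots device in the two necessity directions, which works precisely because the constants hidden in \eqref{eq:UC} do not depend on $n$; beyond this, no substantive obstacle is expected.
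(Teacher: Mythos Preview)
Your proof is correct, but the necessity arguments differ from the paper's. The paper observes that, by \eqref{eq:UC}, point evaluations are bounded on $A^p_\omega$, and then invokes the Duren--Romberg--Shields lemma to conclude directly that $\|g\|_{H^\infty}\le\|M_g\|_{A^p_\omega}$; the mechanism there is that $\delta_z$ is an eigenvector of $M_g^*$ with eigenvalue $\overline{g(z)}$, so $|g(z)|\le\|M_g\|$ with no iteration needed. For part \ref{item:lemma:boundedness:Mg:Sg:Apomega2} the paper uses the Calder\'on formula \eqref{normacono} to identify $S_g\in\BB(A^p_\omega)$ with ``$g$ is a pointwise multiplier of $AT^p_2(\omega)$'', and then applies the same Duren--Romberg--Shields argument on the tent space, using the bounded point evaluations \eqref{eq:UC:ATp}. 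Your iterate-and-take-$n$-th-roots device achieves the same inequalities by a more elementary, self-contained route: it avoids citing the multiplier lemma and, for $S_g$, bypasses the tent-space multiplier characterization entirely by working with $(S_g^n z)'=g^n$ and Cauchy's estimate. The cost is slightly more computation; the gain is that the argument is explicit and does not rely on the adjoint or on identifying a second function space on which $g$ acts as a multiplier. For the sufficiency directions and the ``In particular'' statement your proof and the paper's coincide.
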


\begin{proof}
By Proposition~{\ref{prop:general radial weights} \ref{item:prop:general radial weights1},}
 the space of pointwise evaluations are bounded of $A^p_{\omega}$, therefore the space of  pointwise multipliers of $A^p_{\omega}$ coincides with $H^{\infty}$ 
by \cite[Lemma 11]{Duren:Romberg:Shields} and $\|M_g\|_{A^p_{\omega}}\simeq\|g\|_{H^{\infty}}$.

On the other hand, by Proposition~{\ref{prop:Calderon:formula:tent:spaces} \ref{item:Calderon:formula:tent:spaces1},} $S_g\in\BB(A^p_{\omega})$ means that $g$ is a multiplier on $AT^p_2(\omega)$, so, bearing in mind 
Proposition~{\ref{prop:Calderon:formula:tent:spaces} \ref{item:Calderon:formula:tent:spaces3},} an analogous argument to the previous one gives that
 $\|S_g\|_{A^p_{\omega}}\simeq\|g\|_{H^{\infty}}$.

Finally, if $g\in H^\infty$ then $M_g,S_g\in\BB(A^p_{\omega}$), so $T_g=M_g-S_g-g(0)\delta_0$ is also bounded on $A^p_{\omega}$, by Proposition ~{\ref{prop:general radial weights} \ref{item:prop:general radial weights1}}, and $\|T_g\|_{A_\omega^p}\lesssim\|g\|_{H^{\infty}}$. 
\end{proof}

If $g\in H^\infty$, then 
Lemma~{\ref{lemma:boundedness:Mg:Sg:Apomega}} gives that
\[ 
\|S_g^{m-j}T_g^{j}\|_{A_\omega^p}\le \|S_g\|^{m-j}_{A_\omega^p}\|T_g\|^{j}_{A_\omega^p}\lesssim \|g\|^m_{H^{\infty}},\quad\mbox{ for $0\leq j\le m$,}
\]
so $\|L_g\|_{A_\omega^p}\lesssim \|g\|^m_{H^{\infty}}$.

Now we want to prove the estimate~{$\|g\|_{H^{\infty}}^m\lesssim\|L_g\|_{A_\omega^p}$,} 
	or equivalently $\sup_{0<r<1}\|g_r\|^m_{H^{\infty}}\lesssim\|L_g\|_{A_\omega^p}$, where $g_r(z)=g(rz)$. Assume that $L_g\in\BB(A^p_{\alpha})$. 
	
	By Lemma~{\ref{lemma:boundedness:Mg:Sg:Apomega}}, Proposition~\ref{prop:norm:dilations} and 
Theorem~{\ref{thm:compoq} \ref{item:compo1}, we have that
	\begin{align*}
		\|g_r\|^m_{H^{\infty}}
		&\lesssim\|S^m_{g_r}\|_{A_\omega^p}
		\lesssim
		\|L_{g_r}\|_{A_\omega^p}
		+\sum_{j=1}^m \|S_{g_r}\|^{m-j}_{A_\omega^p}\|T_{g_r}\|^j_{A_\omega^p}
		\\	
		&\lesssim
		\|L_{g}\|_{A_\omega^p}
		+\|T_{g}\|_{A_\omega^p}\sum_{j=1}^m \|S_{g_r}\|^{m-j}_{A_\omega^p}\|T_{g_r}\|^{j-1}_{A_\omega^p}\\
		&\lesssim
	\|L_{g}\|_{A_\omega^p}
		+\|L_{g}\|_{A_\omega^p}^{\frac1m}\,\|g_r\|^{m-1}_{H^{\infty}}.
	\end{align*}
	Hence we conclude that
	$\sup_{0<r<1}\|g_r\|^m_{H^{\infty}}\lesssim\|L_g\|_{A_\omega^p}$, and we are done.

	\subsection{Proof of Theorem \ref{thm:compoq} \ref{item:compo3}} 
We proceed by complete induction on~{$k$.}
 If $k=0$, the estimate  $\|T_g\|_{A^p_{\omega}}\lesssim \|L_g\|^{1/(m+n)}_{A^p_{\omega}(0)}$ follows from 
Theorem \ref{thm:compoq} \ref{item:compoq1}. In particular, this estimate shows that $0<\|L_g\|_{A^p_{\omega}(0)}<\infty$.
	Now assume that $\|S_g^jT_g\|_{A^p_{\omega}}\lesssim \|L_g\|^{(j+1)/(m+n)}_{A^p_{\omega}(0)}$, for $j=0,..., k-1$, and we will prove that
	$\|S_g^kT_g\|_{A^p_{\omega}}\lesssim \|L_g\|^{(k+1)/(m+n)}_{A^p_{\omega}(0)}$.
By  Theorem~{\ref{thm:compoq} \ref{item:compoq1}}, 
$Q_g=L_g^kT_g^{m-nk}$ satisfies that
\begin{equation*}
\|Q_g\|_{A^p_{\omega}(0)}\le\|L_g\|^{k}_{A^p_{\omega}(0)}\|T_g\|^{m-nk}_{A^p_{\omega}(0)}\le \|L_g\|^{k+(m-nk)/(m+n)}_{A^p_{\omega}(0)}=\|L_g\|^{m(k+1)/(m+n)}_{A^p_{\omega}(0)}.
\end{equation*}
Since 
\begin{equation*}
Q_g=L_0+\sum_{j=1}^{km}L_j\quad\mbox{on $\H_0(\D)$},
\end{equation*}
	where $L_0\in W_g(km,m)$, $m,n\in\N$ and $L_j\in\spn W_g(km-j,m+j)$, for $j=1,\dots,m$,
we may apply Corollary  \ref{cor:repreT} to $Q_g$ and obtain that
\begin{equation*}
Q_g=(S_g^kT_g)^m+
        \sum_{j=1}^{km} c_j {\mathcal L}_{km,m,j}\quad\mbox{on $\H_0(\D)$},
\end{equation*}
where $c_j\in\C$. Then, by Theorem \ref{thm:compo} \ref{item:compo1}-\ref{item:compo2}, we have that
	\begin{align*}
		\|S_g^kT_g\|^m_{A^p_{\omega}}&\simeq\|(S_g^kT_g)^m\|_{A^p_{\omega}(0)}\\
		&\le \|Q_g\|_{A^p_{\omega}(0)}
		+\Bigl\{\sum_{\substack{1\le j\le km\\q_j(km,m)=k-1}}
		+\sum_{\substack{1\le j\le km\\ q_j(km,m)<k-1}}
\Bigr\} |c_j| \|  {\mathcal L}_{km,m,j}\|_{A^p_{\omega}(0)}.
	\end{align*}
	If $q_j:=q_j(km,m)=k-1$, for some $1\le j\le km$, then $d_j:=d_j(km,m)<m$.
	In this case,
	\begin{align*}
		\| {\mathcal L}_{km,m,j}\|_{A^p_{\omega}(0)}
		&\le  \|S_g^kT_g\|^{d_j}_{A^p_{\omega}(0)}\|S_g^{k-1}T_g\|^{m+j-d_j}_{A^p_{\omega}(0)}\\
		&\lesssim \|S_g^kT_g\|^{d_j}_{A^p_{\omega}(0)}
\|L_g\|^{k(m+j-d_j)/(m+n)}_{A^p_{\omega}(0)},
	\end{align*}
	with
	\begin{align*}
		k(m+j-d_j)
		&=(k-1)(m+j)+d_j+m+j
-(k+1)d_j\\
		&=km-j+m+j-(k+1)d_j=m(k+1)-(k+1)d_j,
	\end{align*}	
	so
	\quad
	$\|  {\mathcal L}_{km,m,j}\|_{A^p_{\omega}(0)}\lesssim \|S_g^kT_g\|^{d_j}_{A^p_{\omega}(0)}
	\|L_g\|^{(m(k+1)-(k+1)d_j)/(m+n)}_{A^p_{\omega}(0)}$.
	
	If $q_j<k-1$, then
	\begin{align*}
		\|  {\mathcal L}_{km,m,j}\|_{A^p_{\omega}(0)}
		&\lesssim  \|S_g^{q_j+1}T_g\|^{d_j}_{A^p_{\omega}(0)}
\|S_g^{q_j}T_g\|^{m+j-d_j}_{A^p_{\omega}(0)}\\
		&\lesssim \|L_g\|^{\{(q_j+2)d_j
			+(q_j+1)(m+j-d_j)\}/(m+n)}_{A^p_{\omega}(0)}\\
		&= \|L_g\|^{m(k+1)/(m+n)}_{A^p_{\omega}(0)},
	\end{align*}
	From all these estimates, we have 
	\begin{equation*}
		\left(\frac{\|S_g^kT_g\|_{A^p_{\omega}}}{\|L_g\|^{(k+1)/(m+n)}_{A^p_{\omega}(0)}}\right)^m\lesssim 1+\sum_{\substack{1\le j\le km\\q_j
				=k-1}} \left(\frac{\|S_g^kT_g\|_{A^p_{\omega}}}{\|L_g\|^{(k+1)/(m+n)}_{A^p_{\omega}(0)}}\right)^{d_j}.
	\end{equation*}
	Finally, since $d_j<m$, we obtain that  $\|S_g^kT_g\|\lesssim \|L_g\|^{(k+1)/(m+n)}$, which ends the proof. 
\vspace{12pt}

\subsection{Proof of Theorem \ref{thm:compoq:bis} }
By Theorem \ref{thm:compoq} \ref{item:compoq3},
\[
\|S_g^{m/n}T_g\|_{A_\omega^p}\lesssim \|L_g\|^{(m/n+1)/(m+n)}_{A_\omega^p}=\|L_g\|^{1/n}_{A_\omega^p}.
\]
In order to show the opposite estimate, by Corollary \ref{cor:repreT}, it is enough to prove that, for $j=1,\dots,m$, we have that 
$\|{\mathcal L}_j\|_{A_\omega^p}\lesssim \|S_g^{m/n}T_g\|^n_{A_\omega^p}$. 
This estimate is a consequence of Theorem \ref{cor:compoq}. Indeed, since $q_j<q_0=m/n$, applying twice Theorem \ref{cor:compoq} we get that
\begin{align*}
	\|S_g^{q_j+1}T_g\|_{A_\omega^p}&\lesssim \|S_g^{m/n}T_g\|_{A_\omega^p}^{(q_j+2)/(m/n+1)}\quad\mbox{and}\\
	\|S_g^{q_j}T_g\|_{A_\omega^p}&\lesssim \|S_g^{m/n}T_g\|_{A_\omega^p}^{(q_j+1)/(m/n+1)},
\end{align*}
so we obtain that
\begin{equation*}
	\|{\mathcal L}_j\|_{A_\omega^p}\le \|S_g^{q_j+1}T_g\|_{A_\omega^p}^{d_j}
	\|S_g^{q_j}T_g\|_{A_\omega^p}^{n+j-d_j}
	\lesssim \|S_g^{m/n}T_g\|_{A_\omega^p}^\alpha,
\end{equation*}
with 
\begin{align*}
	\alpha&=\Bigl((q_j+2)d_j
	+ (q_j+1)(n+j-d_j)\Bigr)\,\frac1{\frac{m}n+1}\\
	&=(q_j(n+j)+d_j+n+j)\,\tfrac{n}{m+n}=(m-j+n+j)\,\tfrac{n}{m+n}=n.
\end{align*}
This ends the proof of part \ref{item:compoq:bis1}.
Since any $L_g\in W_g(\ell,m,n)$ satisfies \eqref{eq:opLLL}, replacing $m$ by $\ell+m$, part \ref{item:compoq:bis2} directly follows. 
\vspace*{12pt}

As a consequence of the above theorems we obtain the following result.

\begin{proposition}\label{prop:compoq:bis}
Let $L_g$ be a $g$-operator such that
\begin{equation}\label{eqn:prop:compoq:bis}
L_g=a_{2,0}S_g^2+a_{1,0}S_g+a_{1,1}S_gT_g+b_{1,1}T_gS_g+a_{0,2}T_g^2+a_{0,1}T_g\mbox{ on $\H_0(\D)$},
\end{equation}
where the $a_{j,k}$'s are complex numbers.
\begin{enumerate}[label={\sf\alph*)}, topsep=3pt, leftmargin=*,itemsep=3pt] 
\item \label{item:cor:compoq:bis1}
If $a_{2,0}\ne 0$, then $L_g\in\BB(A^p_\omega)$ if and only if $S_g\in\BB(A^p_\omega)$.

\item \label{item:cor:compoq:bis11}
If $a_{2,0}=0$, $a_{1,0}\ne0$ and $a_{1,1}+b_{1,1}=0$, then $L_g\in\BB(A^p_\omega)$ if and only if $S_g\in\BB(A^p_\omega)$.

\item \label{item:cor:compoq:bis2} 
If $a_{2,0}=a_{1,0}=0$ and $a_{1,1}+b_{1,1}\ne 0$, then $L_g\in\BB(A^p_\omega)$ if and only if 
$S_gT_g=\frac 12 T_{g^2}\in\BB(A^p_\omega)$.

\item \label{item:cor:compoq:bis3} 
If $a_{2,0}=a_{1,0}= a_{1,1}+b_{1,1}= 0$, and $a_{0,2}\ne 0$ or $a_{0,1}\ne 0$, then $L_g\in\BB(A^p_\omega)$ if and only if  $T_g\in\BB(A^p_\omega)$.
\end{enumerate}
\end{proposition}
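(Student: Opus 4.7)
The plan is to use the commutation identity $T_gS_g=S_gT_g-T_g^2$ on $\H_0(\D)$ to rewrite
\[
L_g=a_{2,0}S_g^2+a_{1,0}S_g+(a_{1,1}+b_{1,1})S_gT_g+(a_{0,2}-b_{1,1})T_g^2+a_{0,1}T_g\quad\text{on }\H_0(\D),
\]
splitting $L_g$ into a ``degree-$2$'' homogeneous piece $L_g^{(2)}$ and a ``degree-$1$'' piece $L_g^{(1)}:=a_{1,0}S_g+a_{0,1}T_g$. The four cases (a)--(d) correspond to which coefficient in the leading part first fails to vanish. Sufficiency in every case is easy and follows from Lemma~\ref{lemma:boundedness:Mg:Sg:Apomega}: in (a) and (b), $S_g\in\BB(A^p_\omega)$ means $g\in H^{\infty}$, so every monomial in $S_g,T_g$ is bounded; in (c), $S_gT_g=\tfrac12 T_{g^2}\in\BB(A^p_\omega)$ combined with the already proved Theorem~\ref{cor:compoq} (or Theorem~\ref{thm:compoq}~\ref{item:compoq3} applied to $S_gT_g$) forces $T_g\in\BB(A^p_\omega)$; in (d), $T_g\in\BB(A^p_\omega)$ makes every polynomial in $T_g$ bounded via Proposition~\ref{prop:compo}~\ref{item:compo1}.

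For the necessity I would argue case by case. In (a), normalized by $a_{2,0}$ the part $L_g^{(2)}$ satisfies \eqref{eq:opLLL} with $m=2$, $n=0$, so Theorem~\ref{thm:compoq}~\ref{item:compoq2} yields $\|L_g^{(2)}\|_{A^p_\omega}\simeq|a_{2,0}|\,\|g\|_\infty^2$. Invoking dilations through Proposition~\ref{prop:norm:dilations}, for each $r\in(0,1)$ the symbol $g_r$ is bounded on $\overline\D$, so $\|S_{g_r}\|_{A^p_\omega},\|T_{g_r}\|_{A^p_\omega}\lesssim\|g_r\|_\infty$, and the identity $L_{g_r}^{(2)}=L_{g_r}-L_{g_r}^{(1)}$ on $\H_0(\D)$ together with $\|L_{g_r}\|_{A^p_\omega}\lesssim\|L_g\|_{A^p_\omega}$ gives the quadratic inequality
\[
\|g_r\|_\infty^2\lesssim\|L_g\|_{A^p_\omega}+\|g_r\|_\infty
\]
uniform in $r$, forcing $g\in H^{\infty}$. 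In (b) and (c), normalization makes $L_g$ fit \eqref{eq:opLL} with $(m,n)=(1,0)$ and $(1,1)$ respectively (the remaining $T_g^2$ and $T_g$ summands combine into $T_gP_1(T_g)$), so Theorem~\ref{thm:compoq}~\ref{item:compoq1} yields $\|T_g\|_{A^p_\omega}\lesssim\|L_g\|_{A^p_\omega}^{1/(m+n)}$. Isolating the leading paraproduct and controlling the remaining $T_g^k$ terms through Proposition~\ref{prop:compo}~\ref{item:compo1} bounds $\|S_g\|_{A^p_\omega(0)}$ in (b) and $\|S_gT_g\|_{A^p_\omega(0)}=\tfrac12\|T_{g^2}\|_{A^p_\omega(0)}$ in (c); then Proposition~\ref{prop:compo}~\ref{item:compo2} (applied to $g^2$ in (c)) or the elementary fact that $S_g$ annihilates constants (in (b)) transfers these bounds from $A^p_\omega(0)$ to $A^p_\omega$. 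In (d), the hypothesis collapses $L_g$ on $\H_0(\D)$ to $(a_{1,1}+a_{0,2})T_g^2+a_{0,1}T_g$, a polynomial in $T_g$ of positive degree, so Proposition~\ref{prop:compo}~\ref{item:compo3} concludes directly.

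The main obstacle will be the quadratic-inequality step in case (a): the remainder $L_g^{(1)}$ has operator norm comparable to $\|g\|_\infty$, on a smaller but not negligible scale than $\|L_g^{(2)}\|\simeq\|g\|_\infty^2$, so without the passage to dilates the bound would be circular. The use of $g_r$ (for which $S_{g_r}$ and $T_{g_r}$ are already bounded) is precisely what makes the inequality $\|g_r\|_\infty^2-C\|g_r\|_\infty\lesssim\|L_g\|_{A^p_\omega}$ honestly absorbable and the resulting estimate uniform in $r$; this is the most delicate technical point of the argument.
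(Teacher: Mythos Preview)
Your argument is correct and close in spirit to the paper's, but the handling of case~(a) takes a genuinely different route. After the common first step $T_gS_g=S_gT_g-T_g^2$, the paper does not split into homogeneous pieces; instead it first invokes Theorem~\ref{thm:compo} to get $T_g\in\BB(A^p_\omega)$, subtracts the pure $T_g$-terms to reduce to $\widetilde L_g=a_{2,0}S_g^2+a_{1,0}S_g+(a_{1,1}+b_{1,1})S_gT_g$ on $\H_0(\D)$, and then uses the \emph{shift trick}
\[
S_g^2+2\lambda S_g=S_{g+\lambda}^2,\qquad T_g=T_{g+\lambda},\qquad S_gT_g=S_{g+\lambda}T_{g+\lambda}-\lambda T_{g+\lambda}\quad\text{on }\H_0(\D)
\]
to absorb $a_{1,0}S_g$ into the quadratic term and land directly in the hypothesis of Theorem~\ref{thm:compoq}\,\ref{item:compoq2}. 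Your dilation-plus-quadratic-inequality argument avoids this algebraic device and is perfectly viable; it has the advantage of being a template that generalizes (it is essentially how Theorem~\ref{thm:compoq}\,\ref{item:compoq2} itself is proved), while the paper's shift is shorter and sidesteps any bookkeeping.

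One technical point to tighten in your case~(a): you quote $\|L_{g_r}^{(2)}\|_{A^p_\omega}\simeq\|g_r\|_\infty^2$ from Theorem~\ref{thm:compoq}\,\ref{item:compoq2}, but the decomposition $L_{g_r}^{(2)}=L_{g_r}-L_{g_r}^{(1)}$ is only asserted on $\H_0(\D)$, so the triangle inequality you want yields a bound on $\|L_{g_r}^{(2)}\|_{A^p_\omega(0)}$, not on $\|L_{g_r}^{(2)}\|_{A^p_\omega}$. This is easy to fix---either rerun the proof of Theorem~\ref{thm:compoq}\,\ref{item:compoq2} on $A^p_\omega(0)$ (using $\|S_h\|_{A^p_\omega(0)}\simeq\|h\|_\infty$ from Proposition~\ref{prop:boundedness:Mg:Sg:Tg}\,\ref{item:prop:boundedness:Mg:Sg:Tg2} and $\|T_{g_r}\|\lesssim\|L_{g_r}^{(2)}\|_{A^p_\omega(0)}^{1/2}$ from part~\ref{item:compoq1}), or note, as the paper does in cases~(b)--(d), that once $T_g\in\BB(A^p_\omega)$ the pure $T_g$-terms can be subtracted first so that the residual operator already satisfies~\eqref{eq:opLLL}. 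Cases~(b), (c), (d) in your write-up match the paper's reasoning essentially verbatim.
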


\begin{remark}\label{remark:prop:compoq:bis} For a general radial weight $\omega$, the only case where we don't have a description of the boundedness on $A^p_{\omega}$ of the $g$-operator given by \eqref{eqn:prop:compoq:bis} is $a_{2,0}=0$, $a_{1,0}\ne0$, and  $a_{1,1}+b_{1,1}\ne0$.
 But when $\omega$ is either a radial doubling weight ($\omega\in\DD$) or a rapidly decreasing weight ($\omega\in\W$) the remaining case can be done (see Section \ref{S:radialweights}), and consequently we obtain a description of the bounded $g$-operators which a linear combinations of $1$ and $2$-letter $g$-words.
\end{remark}

\begin{proof}[{\bf Proof of Proposition \ref{prop:compoq:bis}}]
Since $T_gS_g=S_gT_g-T_g^2$ on $\H_0(\D)$, 
\begin{equation*}
L_g=a_{2,0}S_g^2+a_{1,0}S_g+(a_{1,1}+b_{1,1})S_gT_g+(a_{0,2}-b_{1,1})T_g^2+a_{0,1}T_g\quad\mbox{on $\H_0(\D)$},
\end{equation*}
so any of the hypothesis in the proposition implies that $cL_g$ satisfies \eqref{eq:opLL}, for some non-zero constant $c$. Therefore, by  Theorem~\ref{thm:compo}, $T_g\in\BB(A^p_0)$ whenever $L_g\in\BB(A^p_\omega)$, and, in particular, \ref{item:cor:compoq:bis3} follows.  As a consequence, $L_g$ is bounded on $A^p_\omega$ if and only so is the $g$-operator $\widetilde{L}_g=L_g-a_{0,1}T_g-(a_{0,2}-b_{1,1})T_g^2$. Recall that 
\begin{equation*}
\widetilde{L}_g=a_{2,0}S_g^2+a_{1,0}S_g+(a_{1,1}+b_{1,1})S_gT_g\quad\mbox{on $\H_0(\D)$},
\end{equation*}
and so $\widetilde{L}_g$ also satisfies \eqref{eq:opLLL}, up to a non-zero multiplicative constant. Then it is clear that \ref{item:cor:compoq:bis11} and \ref{item:cor:compoq:bis2} hold. 

We finally prove part \ref{item:cor:compoq:bis1}. 
Assume that $a_{2,0}\ne 0$ and $L_g\in\BB(A^p_\omega)$. Then, taking into account that 
\[
S_g^2+2\lambda S_g=S^2_{g+\lambda},\,\, T_{g}=T_{g+\lambda},\,\,
S_gT_g=S_{g+\lambda}T_{g+\lambda}-\lambda T_{g+\lambda}
\,\mbox{ on $\H_0(\D)$,}
\]
 for any 
$\lambda\in\C$, we may also assume that $a_{1,0}=0$.
 Then Theorem \ref{thm:compoq} shows that $S_g\in\BB(A^p_\omega)$. The converse is clear because $S_g\in\BB(A^p_\omega)$ implies $T_g\in\BB(A^p_\omega)$, by Lemma \ref{lemma:boundedness:Mg:Sg:Apomega}.
 And that ends the proof. 
\end{proof}

\begin{remark} 
As a consequence of the above results we  show the full characterization of the boundedness on $A^p_{\omega}$ of any two-letter $g$-word.

Obviously, the formula $S_gT_g=T_gM_g=\frac{1}{2}T_{g^2}$ gives that $S_gT_g=T_gM_g\in\BB(A^p_{\omega})$ if and only if $T_{g^2}\in\BB(A^p_{\omega})$. Next, since $M_g^2=M_{g^2}$ and  $S_g^2=S_{g^2}$,  Lemma \ref{lemma:boundedness:Mg:Sg:Apomega} shows that 
any of the operators $M_g^2$ and $S_g^2$ is bounded on $A^p_{\omega}$ if and only if $g\in H^{\infty}$.

Finally, bearing in mind that
\begin{equation*}
M_gT_g= S_gT_g+T^2_g\mbox{ and } T_gS_g=S_gT_g-T^2_g
\mbox{ on $\H_0(\D)$,}
\end{equation*} 
Proposition \ref{prop:compoq:bis} shows that any of those two operators are bounded on $A^p_\omega$ if and only if $T_{g^2}\in\BB(A^p_{\omega})$.
\end{remark}

\section{Boundedness of single analytic paraproducts}
\label{sect:boundedness:single:analytic:paraproducts}
In this section we will give a characterization of the boundedness of $M_g$, $S_g$, and $T_g$, by using analytic tent spaces, a Calder\'{o}n type formula, and spaces of pointwise multipliers.

If $X,Y$ are Banach or quasi Banach spaces,
$\Mult(X,Y)$ denotes the space of pointwise multipliers from $X$ to $Y$, and $\Mult(X):=\Mult(X,X)$. Recall that 
$\|g\|_{\Mult(X,Y)}=\|M_g\|_{X\to Y}$, for any $g\in\H(\D)$.

\begin{proposition}\label{prop:boundedness:Mg:Sg:Tg}
Let $\omega$ be a radial weight and $0<p<\infty$. Then:
\begin{enumerate}[label={\sf\alph*)}, topsep=3pt, leftmargin=*,itemsep=3pt] 
\item \label{item:prop:boundedness:Mg:Sg:Tg1}
$M_g\in\BB(A^p_{\omega})$ if and only if  $g\in H^{\infty}$, and  
$\|M_g\|_{A^p_{\omega}}\simeq \|M_g\|_{A^p_{\omega}(0)}\simeq\|g\|_{H^{\infty}}$.

\item  \label{item:prop:boundedness:Mg:Sg:Tg2}
 $S_g\in\BB(A^p_{\omega})$ if and only if  $g\in H^{\infty}$, and  
$\|S_g\|_{A^p_{\omega}}\simeq \|S_g\|_{A^p_{\omega}(0)}\simeq\|g\|_{H^{\infty}}$.

\item \label{item:prop:boundedness:Mg:Sg:Tg3}
$T_g\in\BB(A^p_{\omega})$ if and only if  
$g'\in\Mult(A^p_{\omega},AT^p_2(\omega))$,  and 
\[
\|T_g\|_{A^p_{\omega}}\simeq\|T_g\|_{A^p_{\omega}(0)}
\simeq\|g'\|_{\Mult(A^p_{\omega},AT^p_2(\omega))}.
\]
Moreover, if $g\in BMOA$, then $T_g\in\BB(A_\omega^p)$ and $\|T_g\|_{A_\omega^p}\lesssim \|g\|_{BMOA}$.
\end{enumerate}
\end{proposition}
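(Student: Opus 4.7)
The plan is to treat parts~(a) and~(b) as short extensions of Lemma~\ref{lemma:boundedness:Mg:Sg:Apomega} via a splitting argument along $A^p_\omega=\C\oplus A^p_\omega(0)$, and to derive~(c) from the Calder\'on-type formula in Proposition~\ref{prop:Calderon:formula:tent:spaces}\,\ref{item:Calderon:formula:tent:spaces1}, supplemented for the BMOA implication by a Carleson-measure estimate in the weighted tent space.

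For parts~(a) and~(b), Lemma~\ref{lemma:boundedness:Mg:Sg:Apomega} already provides boundedness iff $g\in H^\infty$, together with $\|M_g\|_{A^p_\omega}\simeq\|S_g\|_{A^p_\omega}\simeq\|g\|_{H^\infty}$, so the only new content is the equivalence with the seminorm on $A^p_\omega(0)$. Since \eqref{eq:UC} makes the splitting $f=f(0)+f_0$ continuous, and $S_g 1=0$ forces $S_gf=S_gf_0$, the identity $\|S_g\|_{A^p_\omega}\simeq\|S_g\|_{A^p_\omega(0)}$ is immediate. For $M_g$ the same splitting leaves a cross-term $|f(0)|\,\|g\|_{A^p_\omega}$, and to absorb it into $\|M_g\|_{A^p_\omega(0)}$ I would test against $f_0(z)=z\in A^p_\omega(0)$, using $|g(z)|\le 2|zg(z)|$ on $\{|z|\ge 1/2\}$ and Cauchy's formula combined with~\eqref{eq:UC} on $\{|z|<1/2\}$ to recover $\|g\|_{A^p_\omega}\lesssim \|zg\|_{A^p_\omega}\lesssim \|M_g\|_{A^p_\omega(0)}$.

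For the multiplier characterization in~(c), the Calder\'on formula applied to $T_g f$ is decisive: since $(T_g f)(0)=0$ and $(T_g f)'=fg'$, one gets $\|T_g f\|_{A^p_\omega}\simeq\|fg'\|_{AT^p_2(\omega)}$, which upon taking the supremum over $\|f\|_{A^p_\omega}=1$ yields $\|T_g\|_{A^p_\omega}\simeq\|g'\|_{\Mult(A^p_\omega,AT^p_2(\omega))}$. The equivalence with $\|T_g\|_{A^p_\omega(0)}$ then follows from the same splitting, where the cross-term reduces to the estimate $\|g'\|_{AT^p_2(\omega)}\lesssim\|T_g\|_{A^p_\omega(0)}$. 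I would obtain the latter by first testing $T_g$ on $z\in A^p_\omega(0)$ (which via Calder\'on again gives $\|zg'\|_{AT^p_2(\omega)}\lesssim \|T_g\|_{A^p_\omega(0)}$), and then removing the factor $z$ through the topological isomorphism $M_z\colon AT^p_2(\omega)\to AT^p_2(\omega,0)$ of Proposition~\ref{prop:Calderon:formula:tent:spaces}\,\ref{item:Calderon:formula:tent:spaces4}.

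The BMOA implication reduces by the previous paragraph to the multiplier bound $\|fg'\|_{AT^p_2(\omega)}\lesssim \|f\|_{A^p_\omega}\|g\|_{\BMOA}$. My plan is to exploit that $d\mu_g(z)=|g'(z)|^2(1-|z|^2)\,dA(z)$ is a Carleson measure on $\D$ of Carleson norm $\simeq\|g\|_{\BMOA}^2$, combined with the pointwise bound $|f(z)|\le \M f(\zeta)$ for $z\in\Gamma(\zeta)$ and the $A^p_\omega\to L^p_\omega$ boundedness of $\M$ from Proposition~\ref{prop:Calderon:formula:tent:spaces}\,\ref{item:Calderon:formula:tent:spaces2}. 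This last step is where I expect the main obstacle: a naive decoupling of the area integral as $\M f(\zeta)^2\int_{\Gamma(\zeta)}|g'|^2\,dA$ is doomed, because $\int_{\Gamma(\zeta)}|g'|^2\,dA$ is not uniformly bounded in $\zeta\in\D$ (as already shown by $g(z)=\log(1-z)\in\BMOA$). One therefore cannot separate $f$ from $g'$; instead one must carry out a genuine weighted tent-space Carleson embedding for $\mu_g$ in $AT^p_2(\omega)$, carefully balancing the weight $\omega(\zeta)\,dA(\zeta)$ against the density $(1-|z|^2)^{-1}$ that arises from rewriting $|g'(z)|^2\,dA=d\mu_g(z)/(1-|z|^2)$.
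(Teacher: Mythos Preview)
Your treatment of parts~(a), (b), and the multiplier characterization in~(c) is correct and close to the paper's; the only cosmetic differences are that the paper handles $\|M_g\|_{A^p_\omega(0)}$ by conjugating with the isomorphism $M_z$ (so that $\|M_g\|_{A^p_\omega(0)}\simeq\|M_{zg}\|_{A^p_\omega}\simeq\|zg\|_{H^\infty}=\|g\|_{H^\infty}$) rather than via the splitting plus a test function, and it simply invokes Proposition~\ref{prop:compo}\,\ref{item:compo2} (case $n=1$) for $\|T_g\|_{A^p_\omega}\simeq\|T_g\|_{A^p_\omega(0)}$ instead of rederiving it.

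The genuine gap is the BMOA implication. You correctly diagnose that pulling $\M f(\zeta)$ out of the cone integral fails, and your proposed cure --- a weighted tent-space Carleson embedding for $\mu_g$ --- is not carried out and is in fact the hard route: for a general radial weight $\omega$ there is no off-the-shelf Carleson theorem for $AT^p_2(\omega)$, and proving one just for this purpose would duplicate much of the $T_g$ theory you are trying to establish. The paper bypasses the issue entirely by integrating in polar coordinates the classical $H^p$ bound $\|T_g\|_{H^p}\lesssim\|g\|_{\BMOA}$: since $\|h\|_{A^p_\omega}^p\simeq\int_0^1 r\,\|h_r\|_{H^p}^p\,\omega(r)\,dr$ and $(T_gf)_r=T_{g_r}f_r$, one gets
\[
\|T_gf\|_{A^p_\omega}^p
\simeq\int_0^1 r\,\|T_{g_r}f_r\|_{H^p}^p\,\omega(r)\,dr
\lesssim\int_0^1 r\,\|g_r\|_{\BMOA}^p\|f_r\|_{H^p}^p\,\omega(r)\,dr
\lesssim\|g\|_{\BMOA}^p\|f\|_{A^p_\omega}^p,
\]
using only $\|g_r\|_{\BMOA}\lesssim\|g\|_{\BMOA}$. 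This avoids any weighted Carleson machinery and is the argument you should adopt.
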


\begin{proof}
First note that 
Lemma \ref{lemma:boundedness:Mg:Sg:Apomega} shows the estimate 
$\|M_g\|_{A^p_{\omega}}\simeq\|g\|_{H^{\infty}}\simeq\|S_g\|_{A^p_{\omega}}$, so in order to complete the proofs of parts \ref{item:prop:boundedness:Mg:Sg:Tg1} and \ref{item:prop:boundedness:Mg:Sg:Tg2}
we just have to show that $\|M_g\|_{A^p_{\omega}(0)}\simeq\|g\|_{H^{\infty}}\simeq\|S_g\|_{A^p_{\omega}(0)}$.

Since $M_g(zf(z))=M_{zg(z)}f$, for any $f,g\in\H(\D)$,
Proposition~{\ref{prop:Calderon:formula:tent:spaces} \ref{item:Calderon:formula:tent:spaces4}} shows that  $M_g\in\BB(A^p_{\omega}(0))$ if and only if $M_{zg(z)}\in\BB(A^p_{\omega})$,
and $\|M_g\|_{A^p_{\omega}(0)}
\simeq \|M_{zg(z)}\|_{A^p_{\omega}}$.
Moreover, by Lemma \ref{lemma:boundedness:Mg:Sg:Apomega}, we have that $M_{zg(z)}\in\BB(A^p_{\omega})$ if and only if $zg(z)\in H^{\infty}$, that is
$g\in H^{\infty}$, and 
$\|M_{zg(z)}\|_{A^p_{\omega}}\simeq\|zg(z)\|_{\infty}\simeq\|g\|_{H^\infty}$. 

 If $g\in H^{\infty}$, then we have already proved that 
$\|S_g\|_{A^p_{\omega}(0)}\le\|S_g\|_{A^p_{\omega}}\lesssim\|g\|_{H^\infty}$.
In order to prove the converse recall that, by Proposition~{\ref{prop:general radial weights} \ref{item:prop:general radial weights1},} $\Pi_0f:=f-f(0)$ defines a bounded operator from $A^p_{\omega}$ to $A^p_{\omega}(0)$. It follows that if $S_g\in\BB(A^p_{\omega}(0))$,  then $S_g=S_g\circ\Pi_0 \in\BB(A^p_{\omega})$ and 
$\|S_g\|_{A^p_{\omega}}\lesssim \|S_g\|_{A^p_{\omega}(0)}$, so  $g\in H^{\infty}$ and  $\|g\|_{H^\infty}\lesssim\|S_g\|_{A^p_{\omega}(0)}$.
Hence we have just proved parts \ref{item:prop:boundedness:Mg:Sg:Tg1} and \ref{item:prop:boundedness:Mg:Sg:Tg2}.

The first part of \ref{item:prop:boundedness:Mg:Sg:Tg3} follows from  \eqref{normacono} and 
Proposition~{\ref{prop:compo} \ref{item:compo2}.}
 Finally, assume that $g\in BMOA$. Then, by integrating in polar coordinates and taking into account the classical estimate $\|T_g\|_{H^p}\simeq \|g\|_{BMOA}$ (see \cite{Aleman:Siskakis:1} and \cite{Aleman:Cima}), we have
 \begin{align*} \|T_g f\|_{A_\omega^p}^p&\simeq \int_0^1 r \|(T_gf)_r\|_{H^p}^p \omega(r) dr=  \int_0^1 r \|T_{g_r}f_r\|_{H^p}^p \omega(r) dr\\&\lesssim	\int_0^1 r \|g_r\|_{BMOA}^p\|f_r\|_{H^p}^p \omega(r) dr\\& \lesssim \|g\|_{BMOA}^p \int_0^1 r \|f_r\|_{H^p}^p \omega(r) dr\simeq \|g\|_{BMOA}^p \|f\|_{A_\omega^p}^p,
 \end{align*}
so $\|T_g\|_{A_\omega^p}\lesssim \|g\|_{BMOA}$. And that ends the proof.
\end{proof}

As a consequence of Theorem \ref{cor:compoq} and 
Proposition {\ref{prop:boundedness:Mg:Sg:Tg} \ref{item:prop:boundedness:Mg:Sg:Tg3}} we obtain the following result.

\begin{corollary} 
	If $\omega$ is a radial weight and $0<p<\infty$, then
\begin{equation*}	
	\|(g^j)'\|^{\frac1j}_{Mult(A^p_\omega,AT^p_2(\omega))} \lesssim \|(g^k)'\|_{Mult(A^p_\omega,AT^p_2(\omega))}^{\frac1k}
\qquad(1\le j\le k).
\end{equation*}
\end{corollary}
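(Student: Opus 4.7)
The plan is to simply concatenate two facts already established in the paper: the identification of the Volterra-operator norm $\|T_g\|_{A^p_\omega}$ with the multiplier norm $\|g'\|_{\Mult(A^p_\omega,AT^p_2(\omega))}$, and the radicality estimate for $\|\cdot\|_{\TT(A^p_\omega)}$ proved in Theorem~\ref{cor:compoq}.

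First, I would apply Proposition~\ref{prop:boundedness:Mg:Sg:Tg}~\ref{item:prop:boundedness:Mg:Sg:Tg3} to the symbols $h=g^j$ and $h=g^k$. This yields
\[
\|T_{g^j}\|_{A_\omega^p}\simeq\|(g^j)'\|_{\Mult(A^p_\omega,AT^p_2(\omega))}
\quad\text{and}\quad
\|T_{g^k}\|_{A_\omega^p}\simeq\|(g^k)'\|_{\Mult(A^p_\omega,AT^p_2(\omega))},
\]
with comparability constants depending only on $p$ and $\omega$ (in particular independent of $g$). Since $\|h\|_{\TT(A^p_\omega)}=\|T_h\|_{A_\omega^p}$ by definition, this transfers the claimed multiplier estimate to an equivalent estimate for the $\TT$-seminorm of the powers of $g$.

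Next, for $1\le j<k$, Theorem~\ref{cor:compoq} applied with $m=j$ and $n=k$ gives
\[
\|g^j\|_{\TT(A^p_\omega)}^{1/j}\lesssim\|g^k\|_{\TT(A^p_\omega)}^{1/k},
\]
and the case $j=k$ is trivially true. Combining with the first step gives the desired inequality for all $1\le j\le k$.

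There is essentially no obstacle here, as both ingredients are already in hand; the only care required is to verify that the comparability constants coming from Proposition~\ref{prop:boundedness:Mg:Sg:Tg}~\ref{item:prop:boundedness:Mg:Sg:Tg3} are uniform in the symbol (which they are, since the equivalence follows from the Calderón-type formula~\eqref{normacono} applied to $T_{g^j}f$ and does not depend on $g$), so that raising to the powers $1/j$ and $1/k$ preserves the $\lesssim$ relation with constants depending only on $p$, $\omega$, $j$, and $k$.
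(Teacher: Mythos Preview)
Your proposal is correct and follows exactly the route the paper indicates: the corollary is stated immediately after Proposition~\ref{prop:boundedness:Mg:Sg:Tg} with the remark that it is ``a consequence of Theorem~\ref{cor:compoq} and Proposition~\ref{prop:boundedness:Mg:Sg:Tg}~\ref{item:prop:boundedness:Mg:Sg:Tg3}'', and you have simply spelled out that combination.
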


\section{Further results for some classes of radial weights}\label{S:radialweights}

We begin this section obtaining a further result on the composition of analytic paraproducts acting 
on a Bergman space $A^p_\omega$
which can be applied to several classes of radial weights.

\begin{theorem}\label{th:main5}
Let $\omega$ be a radial weight and $0<p<\infty$. Assume that there is $\rho_0\in [0,1)$ such that for any $\xi\in\D\setminus D(0,\rho_0)$ there exists
$K_\xi\in\H(\D)$ with the following properties:
\begin{enumerate}[label={\sf(\alph*)}, topsep=3pt, leftmargin=48pt,itemsep=3pt] 
\item\label{item:th:main5:1} $\|K_\xi\|_{A^p_\omega}=1$.
\item\label{item:th:main5:2} $\lim_{|\xi|\to 1^-} |K_\xi(z)|=0$ uniformly on compact subsets of $\D$.
\item\label{item:th:main5:3}  
${\displaystyle
\lim_{\xi\to\zeta}\int_{\D\setminus D(\zeta,\delta)}|K_\xi|^p\omega\,dA=0}$,  for any $\delta>0$ and $\zeta\in\mathbb{T}$.
\end{enumerate}
Then, if $L_g$ is a $g$-operator written in the form~\eqref{eq:STrepre} such that $P_{N+1}\neq 0$, $L_g$ is bounded on $A^p_\omega$ if and only if
$g\in H^\infty$.
\end{theorem}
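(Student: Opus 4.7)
The sufficient direction $g \in H^{\infty} \Rightarrow L_g \in \BB(A^p_\omega)$ is immediate from Lemma~\ref{lemma:boundedness:Mg:Sg:Apomega}, since then $M_g, S_g, T_g$ are all bounded on $A^p_\omega$ and $L_g$ is a polynomial expression in them. For the converse, assume $L_g \in \BB(A^p_\omega)$. Since $P_{N+1} \neq 0$, $L_g$ is non-trivial, so Theorem~\ref{thm:compo} yields $T_g \in \BB(A^p_\omega)$. By Proposition~\ref{prop:norm:dilations}, $\|L_{g_r}\|_{A^p_\omega}$ and $\|T_{g_r}\|_{A^p_\omega}$ are uniformly controlled in $r \in (0,1)$, so bounding $\|g\|_{H^{\infty}}$ reduces to obtaining a bound on $\|g_r\|_{H^{\infty}}$ that is uniform in $r$, which lets me assume $g \in \H(\overline{\D})$ throughout. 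The key observation is that
\[
L_g^{(2)} := S_g P_{N+1}(S_g) = S_h, \quad \text{where } h := g\, P_{N+1}(g),
\]
because $S_g^{j+1}=S_{g^{j+1}}$; by Proposition~\ref{prop:boundedness:Mg:Sg:Tg}\,(b), $\|S_h\|_{A^p_\omega} \simeq \|h\|_{H^{\infty}}$, and since $z\, P_{N+1}(z)$ is a polynomial of positive degree, $h \in H^{\infty}$ will force $g \in H^{\infty}$. Hence the task reduces to showing $\|h\|_{H^{\infty}} \lesssim \|L_g\|_{A^p_\omega} + C$ uniformly in the dilation.

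For this, I would test $L_g$ against the kernels $K_\xi$ and exploit the Calder\'on formula (Proposition~\ref{prop:Calderon:formula:tent:spaces}\,(a)). Using $(T_g F)' = F g'$ and $(S_g^k F)' = F' g^k$, one computes
\[
(L_g K_\xi)'(z) = K_\xi'(z)\, h(z) + g'(z)\, R(z),
\]
where $R(z) = \sum_{k=0}^N g(z)^k P_k(T_g)(K_\xi - K_\xi(0))(z)$ together with a rank-one remainder proportional to $K_\xi(0)$. The rank-one part $L_g^{(3)} K_\xi = K_\xi(0)\, L_g\mathbf{1}$ tends to zero in $A^p_\omega$ as $|\xi|\to 1$ thanks to property (b) and the fact that $L_g\mathbf{1}\in A^p_\omega$. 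For the main term $K_\xi' h$, the concentration hypotheses (a)--(c) together with the continuity of $h$ on $\overline{\D}$ yield, via the standard splitting $\int_\D = \int_{D(\zeta,\delta)} + \int_{\D\setminus D(\zeta,\delta)}$, that $\|(h-h(\zeta))K_\xi\|_{A^p_\omega} \to 0$ as $\xi \to \zeta \in \T$; rewriting $(h-h(\zeta))K_\xi' = [(h-h(\zeta))K_\xi]' - h' K_\xi$ and invoking Calder\'on then gives the lower bound $\|K_\xi' h\|_{AT^p_2(\omega)} \geq |h(\zeta)| - o(1) - O(\|T_h K_\xi\|_{A^p_\omega})$, using also $\|K_\xi'\|_{AT^p_2(\omega)} \to 1$ via (b). The mixed contribution $g' R$ is handled by Proposition~\ref{prop:boundedness:Mg:Sg:Tg}\,(c): $g'$ multiplies $A^p_\omega$ into $AT^p_2(\omega)$ with norm $\simeq \|T_g\|_{A^p_\omega}$, so $\|g' R\|_{AT^p_2(\omega)} \lesssim \|T_g\|_{A^p_\omega} \|R\|_{A^p_\omega}$. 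Combining all this with $\|L_g K_\xi\|_{A^p_\omega} \leq \|L_g\|_{A^p_\omega}$ and letting $\xi \to \zeta \in \T$ produces an estimate of the form $|h(\zeta)| \leq C_1 \|L_g\|_{A^p_\omega} + C_2$, whence $h \in H^{\infty}$ and therefore $g \in H^{\infty}$.

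The main technical obstacle will be closing the absorption of the $g' R$ contribution: the expression $R$ carries coefficients that are powers of $g$, so naive estimates of $\|R\|_{A^p_\omega}$ reintroduce $\|g\|_{H^{\infty}}$—precisely the quantity one is trying to bound. A similar circularity appears in the auxiliary term $\|h' K_\xi\|_{AT^p_2(\omega)} \simeq \|T_h K_\xi\|_{A^p_\omega}$, whose implicit constant depends on $\|h\|_{H^{\infty}}$. Overcoming these will require carefully tracking the polynomial degrees in $g$, exploiting the dilation procedure to keep all relevant norms uniform, and bootstrapping the main inequality so as to extract a bound on $\|g\|_{H^{\infty}}$ depending only on $\|L_g\|_{A^p_\omega}$ and on the polynomials $P_0,\dots, P_{N+2}$.
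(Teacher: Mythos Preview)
Your overall strategy---reduce to $g_r$ by dilation, isolate $h=gP_{N+1}(g)$, and test $L_g$ against the peaking functions $K_\xi$ to recover $|h(\zeta)|$---is exactly the paper's. But the way you try to close the argument is the hard way, and the circularity you flag is real and not resolved by what you wrote. Your estimate $\|g'R\|_{AT^p_2(\omega)}\lesssim\|T_g\|\,\|R\|_{A^p_\omega}$ is useless as stated, because $R$ carries the multiplication by $g^k$ and so $\|R\|_{A^p_\omega}$ already depends on $\|g\|_{H^\infty}$; the same problem infects your treatment of the ``main term'' through $\|T_h K_\xi\|$. The bootstrapping you allude to is vague, and I do not see a polynomial inequality in $\|g_r\|_{H^\infty}$ emerging from your estimates that would let you absorb.

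The paper breaks the circularity with a different, and much cleaner, idea: \emph{compactness}. After dilation, $g_r'\in H^\infty$, and one proves (Lemma~\ref{le:Tgcompacto}) that $T_{g_r}$ is compact on $A^p_\omega$; hence every term in the $ST$-representation of $L_{g_r}$ that contains a $T_{g_r}$ factor is compact, and writing $S_{h_r}=M_{h_r}-T_{h_r}-h_r(0)\delta_0$ shows that $L_{g_r}=M_{h_r}+J$ with $J$ compact. Since $\|K_\xi\|=1$ and $K_\xi\to0$ uniformly on compacta, $\|JK_\xi\|\to0$ regardless of how large $\|J\|$ (and hence $\|g_r\|_{H^\infty}$) happens to be---this is what kills the circularity. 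For the surviving multiplication term one simply computes $\|M_{h_r}K_\xi\|^p=\int_\D|K_\xi|^p|h_r|^p\omega\,dA\to|h_r(\zeta)|^p$ by the approximate-identity behaviour (Lemma~\ref{le:techinfty}), with no Calder\'on formula or tent-space manipulations needed. This yields $|h_r(\zeta)|\le\|L_g\|$ for every $\zeta\in\T$ and every $r$, hence $h\in H^\infty$, hence $g\in H^\infty$. Your derivative decomposition $(L_gK_\xi)'=K_\xi'h+g'R+\cdots$ can be made to work along the same lines---each summand $g'g^kP_k(T_g)\Pi_0K_\xi$ is the derivative of $S_g^kT_gP_k(T_g)\Pi_0K_\xi$, which goes to $0$ by compactness---but you should make compactness the engine rather than try to bound the remainder directly.
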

The following two lemmas will be used  in the proof of Theorem~\ref{th:main5}.
We start up with a straightforward  approximation identity type result.
	\begin{lemma}\label{le:techinfty}
		Let $\omega$ be a radial weight and $0<p<\infty$ such that the properties \ref{item:th:main5:1} and \ref{item:th:main5:3} of Theorem~{\textup{\ref{th:main5}}} hold. Then any continuous function $F$ on $\overline{\D}$ satisfies that
		\begin{equation*}
			\lim_{\xi\to \zeta}\int_{\D} |K_\xi|^p F\omega\,dA
            =F(\zeta),\quad\mbox{for every $\zeta\in\T$}.
		\end{equation*}
	\end{lemma}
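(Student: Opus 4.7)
The plan is to carry out a standard approximate-identity splitting, using that, by properties \ref{item:th:main5:1} and \ref{item:th:main5:3}, the family of measures $d\mu_\xi:=|K_\xi|^p\omega\,dA$ consists of probability measures on $\D$ whose mass concentrates near $\zeta\in\T$ as $\xi\to\zeta$.

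First I would invoke \ref{item:th:main5:1} to rewrite
\[
\int_\D|K_\xi|^p F\,\omega\,dA-F(\zeta)
=\int_\D|K_\xi|^p\,(F-F(\zeta))\,\omega\,dA.
\]
Given $\varepsilon>0$, uniform continuity of $F$ on the compact set $\overline{\D}$ yields $\delta>0$ such that $|F(z)-F(\zeta)|<\varepsilon$ whenever $z\in\overline{\D}$ and $|z-\zeta|<\delta$.

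Next I would split the integral on the right-hand side as $\int_{D(\zeta,\delta)\cap\D}+\int_{\D\setminus D(\zeta,\delta)}$. By the choice of $\delta$ together with \ref{item:th:main5:1}, the first piece is at most $\varepsilon\int_\D|K_\xi|^p\omega\,dA=\varepsilon$ in modulus. The second piece is bounded by $2\|F\|_\infty\int_{\D\setminus D(\zeta,\delta)}|K_\xi|^p\omega\,dA$, where $\|F\|_\infty=\sup_{\overline{\D}}|F|<\infty$, and this quantity tends to $0$ as $\xi\to\zeta$ by \ref{item:th:main5:3}. Since $\varepsilon>0$ is arbitrary, the desired limit follows.

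I do not anticipate any real obstacle: this is the textbook argument showing that an approximate identity reproduces the value of a continuous function at the point of concentration. Neither the analyticity of $K_\xi$ nor the precise range of $p$ plays any role; only hypotheses \ref{item:th:main5:1} and \ref{item:th:main5:3}, together with continuity (hence uniform continuity and boundedness) of $F$ on $\overline{\D}$, are used.
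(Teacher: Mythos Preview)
Your argument is correct and is exactly the standard approximate-identity computation the paper has in mind; in fact the paper does not spell out a proof at all, merely calling the lemma ``a straightforward approximation identity type result,'' so your write-up fills in precisely what is implicit there.
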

	
	\begin{lemma}\label{le:Tgcompacto}
		Let $\omega$ be a radial weight and $0<p<\infty$.  If $g'\in H^\infty$, then $T_g: A^p_\omega\to A^p_\omega$ is compact.
	\end{lemma}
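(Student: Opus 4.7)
The plan is to verify the sequential definition of compactness. Let $\{f_n\}$ be a bounded sequence in $A^p_\omega$. By the pointwise estimate in Proposition~\ref{prop:general radial weights}\ref{item:prop:general radial weights1}, $\{f_n\}$ is uniformly bounded on compact subsets of $\D$, so Montel's theorem lets me pass to a subsequence, still denoted $\{f_n\}$, converging uniformly on compacta to some $f\in\H(\D)$. A routine Fatou argument (again using Proposition~\ref{prop:general radial weights}\ref{item:prop:general radial weights1}) gives $f\in A^p_\omega$. Thus, setting $h_n=f_n-f$, it suffices to prove that $\|T_gh_n\|_{A^p_\omega}\to0$, where $h_n\to0$ uniformly on compacta and $M:=\sup_n\|h_n\|_{A^p_\omega}<\infty$.

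Since $(T_gh_n)(0)=0$, the Calder\'on-type formula in Proposition~\ref{prop:Calderon:formula:tent:spaces}\ref{item:Calderon:formula:tent:spaces1} reduces the problem to showing
\[
\|h_n g'\|_{AT^p_2(\omega)}^p
=\int_\D\biggl(\int_{\Gamma(\zeta)}|h_n|^2|g'|^2\,dA\biggr)^{\!p/2}\omega(\zeta)\,dA(\zeta)\longrightarrow 0.
\]
Fix $\rho\in(0,1)$ and write $\varepsilon_n(\rho):=\sup_{|z|\le\rho}|h_n(z)|$, which tends to $0$ as $n\to\infty$ by uniform convergence on $\overline{D(0,\rho)}$. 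Split $\Gamma(\zeta)=\bigl(\Gamma(\zeta)\cap D(0,\rho)\bigr)\cup\bigl(\Gamma(\zeta)\setminus D(0,\rho)\bigr)$, and use two geometric facts: first, $|\Gamma(\zeta)|$ is bounded by a universal constant, so $|\Gamma(\zeta)\cap D(0,\rho)|\le C$; second, for any $z\in\Gamma(\zeta)\setminus D(0,\rho)$ the defining inequality $|z-\zeta|<2(|\zeta|-|z|)$ gives $|z-\zeta|<2(|\zeta|-\rho)<2(1-\rho)$, so $\Gamma(\zeta)\setminus D(0,\rho)\subset B(\zeta,2(1-\rho))$ and hence has normalized area at most $4(1-\rho)^2$.

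Using $|g'|\le\|g'\|_\infty$, the first piece is bounded by $C\|g'\|_\infty^2\varepsilon_n(\rho)^2$, and the second piece by
\[
\|g'\|_\infty^2\,|\Gamma(\zeta)\setminus D(0,\rho)|\,\bigl(\M h_n(\zeta)\bigr)^2\le 4\|g'\|_\infty^2(1-\rho)^2\bigl(\M h_n(\zeta)\bigr)^2.
\]
Applying the elementary inequality $(a+b)^{p/2}\le C_p(a^{p/2}+b^{p/2})$, integrating against $\omega\,dA$, and invoking the boundedness of $\M\colon A^p_\omega\to L^p_\omega(\D)$ from Proposition~\ref{prop:Calderon:formula:tent:spaces}\ref{item:Calderon:formula:tent:spaces2}, I obtain
\[
\|h_n g'\|_{AT^p_2(\omega)}^p\le C\|g'\|_\infty^p\Bigl(\varepsilon_n(\rho)^p\,\|\omega\|_{L^1(\D)}+(1-\rho)^p\,\|h_n\|_{A^p_\omega}^p\Bigr)\le C'\bigl(\varepsilon_n(\rho)^p+(1-\rho)^p\bigr).
\]
Letting $n\to\infty$ with $\rho$ fixed gives $\limsup_n\|T_gh_n\|_{A^p_\omega}^p\lesssim (1-\rho)^p$, and then letting $\rho\to 1^-$ yields the claim.

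The only delicate point is the geometric area bound $|\Gamma(\zeta)\setminus D(0,\rho)|\le 4(1-\rho)^2$, which produces the decisive $(1-\rho)^p$ factor needed to kill the tail uniformly in $n$. Without that factor one would be left with $\int_{|\zeta|>\rho}(\M h_n)^p\omega\,dA$, which in general is merely $O(1)$ rather than $o(1)$ because $\{(\M h_n)^p\omega\}$ need not be uniformly tight (e.g.\ for normalised reproducing kernels concentrating at a boundary point).
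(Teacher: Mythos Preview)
Your proof is correct and follows essentially the same approach as the paper: reduce via the Calder\'on formula to the tent-space norm of $h_ng'$, split the Stolz region at radius $\rho$, control the inner part by uniform convergence on compacta and the outer part via the area bound $|\Gamma(\zeta)\setminus D(0,\rho)|\lesssim(1-\rho)^2$ together with the maximal function estimate $\|\M h_n\|_{L^p_\omega}\lesssim\|h_n\|_{A^p_\omega}$. The only cosmetic differences are that the paper invokes \cite[Lemma~3.7]{Tjani} instead of spelling out the Montel argument, and records the cruder exponent $(1-r)^{p/2}$ rather than your $(1-\rho)^p$.
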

	\begin{proof}
		Let
		$\{ f_j\}$ be a bounded sequence in
		$A^p_\omega$  such that $f_j\to0$ uniformly on compact subsets of $\D$. By \cite[Lemma~3.7]{Tjani}, we only have to prove that 
$\lim_{j\to \infty} \|T_gf_j\|_{A^p_{\omega}}=0$.
		By Proposition~{\ref{prop:Calderon:formula:tent:spaces} \ref{item:Calderon:formula:tent:spaces1}-\ref{item:Calderon:formula:tent:spaces2},} for any $0<r<1$ we have that
\begin{align*}
\|T_gf_j\|^p_{A^p_\omega}  
&\simeq \int_{\D}\biggl(
				\int_{\Gamma(\zeta)}|g'f_j|^2 dA\biggr)^{\frac{p}2}
				\omega(\zeta)\,\,dA(\zeta)
				\\
 &\lesssim \sup_{|z|\le r}|f_j(z)|^p \int_{\D}\biggl(
				\int_{\Gamma(\zeta)\cap\overline{D(0,r)}}|g'|^2 dA\biggr)^{\frac{p}2}
				\omega(\zeta)\,\,dA(\zeta)\\
&\quad+ \|g'\|^p_{\infty} \int_{\D}\biggl(
				\int_{\Gamma(\zeta)\setminus \overline{D(0,r)}}|f_j|^2 dA\biggr)^{\frac{p}2}
				\omega(\zeta)\,\,dA(\zeta)
				\\ 
& \lesssim\|g'\|^p_{\infty}\sup_{|z|\le r}|f_j(z)|^p+\|g'\|^p_{\infty}(1-r)^{p/2}
                \|\M f_j\|_{L^p_\omega}^p
			\\ 
& \lesssim\|g'\|^p_{\infty}
\biggl(\sup_{|z|\le r}|f_j(z)|^p+(1-r)^{p/2}
                \|f_j\|_{A^p_\omega}^p\biggr).
		\end{align*}
		Therefore, taking into account that $f_j\to0$ uniformly on compacta and $\sup_{j} \|f_j\|_{A^p_\omega}<\infty$, the above inequality shows that $\lim_{j\to \infty}\|T_gf_j\|_{A^p_\omega}=0$, and that finishes the proof.
	\end{proof} 
	
	\begin{proof}[{\bf Proof of Theorem~\ref{th:main5}}] We will follow the lines of the proof of \cite[Theorem 1.2~a)]{Aleman:Cascante:Fabrega:Pascuas:Pelaez}.
		If $g\in H^\infty$, then $S_g,T_g\in\BB(A^p_{\omega})$, by Proposition~\ref{prop:boundedness:Mg:Sg:Tg}, and so
 $L_g\in\BB(A^p_{\omega})$. Conversely, assume that  $L_g\in \BB(A^p_{\omega})$ and apply Proposition~{\ref{prop:norm:dilations}} to conclude that for $r\in (0,1)$, we have  $L_{g_r}\in \BB(A^p_{\omega})$ with $\|L_{g_r}\|_{A^p_\omega}\lesssim\|L_{g}\|_{A^p_\omega}$. From~\eqref{eq:STrepre} we have that
	    \[
		L_{g_r}=\sum_{k=0}^NS_{g_r}^kT_{g_r}P_k(T_{g_r})\Pi_0+S_{g_r}P_{N+1}(S_{g_r})+P_{N+2}(g_r-g(0),g(0))\,\delta_0,
		\]
for any $r\in (0,1)$, and, by Lemma~\ref{le:Tgcompacto}, we see that all the operators on the right are compact, except 
		\[
		S_{g_r}P_{N+1}(S_{g_r}) = S_{g_rP_{N+1}(g_r)}= M_{g_rP_{N+1}(g_r)}-T_{g_rP_{N+1}(g_r)}-(g_rP_{N+1}(g_r))(0)\delta_0.
		\]
		So, by Lemma~\ref{le:Tgcompacto}, we conclude that
		\[
		L_{g_r}=M_{g_rP_{n+1}(g_r)}+J,
		\]
		where $J$ is compact.
		
		Now, for any $\xi\in\D\setminus D(0,\rho_0)$, 
		consider the functions $K_\xi$ of the statement. Then, putting together  hypothesis \ref{item:th:main5:1} and \ref{item:th:main5:2}, and \cite[Lemma~3.7]{Tjani}, we have  
		$\|JK_\xi\|_{A^p_\omega}\to 0$.
		On the other hand, note that if $G_r=g_rP_{n+1}(g_r)$ then
		\[
		\|M_{G_r} K_\xi\|_{A^p_\omega}^p
		=\int_{\D} |K_\xi(z)|^p\,|G_r(z)|^p \omega(z)\,\,dA(z)
		\quad(\xi\in\D\setminus D(0,\rho_0)).
		\]
		Thus, since $|G_r|=|g_rP_{n+1}(g_r)|$ is continuous on $\overline{\D}$, by Lemma~\ref{le:techinfty}
		\[
		\lim_{\xi\to\zeta}\| M_{g_rP_{n+1}(g_r)} K_\xi\|_{A^p_\omega}^p=|g_rP_{n+1}(g_r)(\zeta)|^p\qquad (\zeta\in \T).
		\]
		Altogether we get 
		\[
		|g_rP_{n+1}(g_r)(\zeta)|^p=\lim_{\xi\to\zeta}\|L_{g_r}K_\xi\|^p_{A^p_\omega}\le \|L_g\|^p_{A^p_\omega}\lim_{\xi\to\zeta}\| K_\xi\|^p_{A^p_\omega}= \|L_g\|^p_{A^p_\omega},
		\]
		for all $\zeta\in \T$ and $0<r<1$, which implies that $gP_{n+1}(g)\in H^\infty$, and so $g\in H^{\infty}$, by \cite[Lemma 4.5]{Aleman:Cascante:Fabrega:Pascuas:Pelaez}. Thus the proof is finished.
	\end{proof}

We remark that, in the next sections, Theorem~\ref{th:main5} will allow us  to complete the remaining open case in Proposition \ref{prop:compoq:bis} (see Remark \ref{remark:prop:compoq:bis}) for two classes of radial weights, which have drawn a lot of attention in the recent years.

\subsection{Radial doubling weights}
Recall that the class $\DD$ of {\em radial upper doubling weights} is composed of all radial weights $\omega$ such that
 $\widehat{\om}(r)\le
C\widehat{\om}(\frac{1+r}{2})$, for some constant $C=C(\omega)>1$ and all $0\le r <1$. On the other hand, the class $\Dd$ of {\em radial lower doubling weights} is composed of all radial weights $\omega$ such that such that $\widehat{\omega}(r)\le C\int_{r}^{r+\frac{1-r}{K}}\omega(s)\,ds$, for some constants $K>1$ and $C>0$, and  for any $0\le r<1$. The class of {\em radial doubling weights} is
$\DDD=\DD\cap\Dd$. 
If $\omega$ is radial weight,  the Littlewood-Paley type formula 
 \begin{equation*}
 \|f\|_{A^p_\om}^p\simeq|f(0)|^p+\int_\D|f'(z)|^p(1-|z|)^p\om(z)\,dA(z)  \quad(f\in\H(\D))
 \end{equation*}
holds  if and only if $\omega$ is a radial doubling weight 
(see \cite[Theorem~5]{PR21}). This result is a key to prove that $\TT(A^p_\omega)=\B$ if $\omega\in\DDD$,  see \cite[Proposition~6.1 and Theorem~6.3]{PelSum14}. Consequently,
Theorem~\ref{th:powerBloch} implies that $\TT(A^p_\omega)$ 
satisfies the radicality property if $\omega\in\DDD$.
However the situation is more involved for $\om\in\DD\setminus\DDD$,
 because for each $p\neq 2$ there are radial upper doubling weights $\omega$ such that a Littlewood-Paley type formula \eqref{eq:NOL-P} does not hold for
any radial function $\varphi$  (see \cite[Proposition~4.3]{Pelaez:Rattya:Memoirs} or \cite[Proposition~3.7]{PelSum14}). Therefore, for such weights we have to work with the  Calder\'on type formula \eqref{normacono} to obtain 
an equivalent norm to $\|\cdot\|_{A^p_\omega}$ in terms of the derivative. In order  
 to give a  geometric description of the space $\TT(A^p_\omega)$, $\om\in\DD$,  we introduce the 
 space
$\CC^{1}(\om^\star)$ of  $g\in\H(\D)$ such
that
    \begin{equation*}
    \|g\|^2_{\CC^{1}(\om^\star)}=|g(0)|^2+\sup_{S}\frac{\int_{S}|g'(z)|^2\om^\star(z)\,dA(z)}
    {\om\left(S\right)}<\infty,
    \end{equation*}
    where\begin{equation*}\label{omega:star}
		\omega^\star(z)=\int_{|z|}^1 s\omega(s)\log\frac{s}{|z|}\,ds
		\qquad(z\in\D\setminus\{0\})
	\end{equation*} and $S$ runs over all Carleson squares in $\D$.
If we consider the measure $d\mu_{g,\omega^\star}= |g'(z)|^2\om^\star(z)\,dA(z)$,   a byproduct of \cite[Theorem~3.3]{PelSum14} (see also \cite[Theorem~2.1]{Pelaez:Rattya:Memoirs})   gives that
 the identity operator
$Id: A^p_\omega\to L^p(\mu_{g,\omega^\star})$ is bounded if and only if $g\in \CC^{1}(\om^\star)$, and 
$\|Id\|^p_{A^p_\omega\to L^p(\mu_{g,\omega^\star})}\simeq \|g-g(0)\|^2_{\CC^{1}(\om^\star)}$, for any $\omega\in\DD$.
Bearing in mind this fact, \cite[Theorem~4.1]{Pelaez:Rattya:Memoirs} or \cite[Proposition~6.4]{PelSum14}, and a careful inspection of their proof, we get the following result. 

\begin{lettertheorem}\label{th:integralq=p} 
 If $\om\in\DD$, $g\in\H(\D)$ and $0<p<\infty$, then $T_g\in\BB(A^p_\om)$ if and only if $g\in \CC^1(\om^\star)$.
 Moreover, $\|T_g\|_{ A^p_\omega} \simeq \|g-g(0)\|_{\CC^{1}(\om^\star)}.$
\end{lettertheorem}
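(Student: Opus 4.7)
The plan is to reduce the boundedness of $T_g$ on $A^p_\omega$ to a Carleson-type embedding, which is then characterized by membership in $\CC^1(\omega^\star)$ via the Carleson-measure result quoted in the paragraph preceding the theorem. The argument proceeds in three steps.

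First, I would translate the boundedness of $T_g$ into a tent-space inequality. Since $T_g f\in\H_0(\D)$ and $(T_g f)'=fg'$, Proposition \ref{prop:Calderon:formula:tent:spaces}\ref{item:Calderon:formula:tent:spaces1} gives
\begin{equation*}
\|T_g f\|_{A^p_\omega}^p \simeq \|fg'\|^p_{AT^p_2(\omega)}
=\int_\D\biggl(\int_{\Gamma(\zeta)}|f(z)g'(z)|^2\,dA(z)\biggr)^{p/2}\omega(\zeta)\,dA(\zeta),
\end{equation*}
so that $T_g\in\BB(A^p_\omega)$ is equivalent to the continuity of the multiplication $f\mapsto fg'$ from $A^p_\omega$ into $AT^p_2(\omega)$, with comparable norms.

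Second, I would identify this tent-space quantity with the Carleson-type integral associated to $\mu_{g,\omega^\star}$. For $p=2$, a direct Fubini calculation together with the standard identity $\int_{\{\zeta\in\D:z\in\Gamma(\zeta)\}}\omega(\zeta)\,dA(\zeta)\simeq \omega^\star(z)$ (which uses $\omega\in\DD$) yields
\begin{equation*}
\|fg'\|^2_{AT^2_2(\omega)}
\simeq\int_\D|f(z)|^2|g'(z)|^2\omega^\star(z)\,dA(z)
=\|f\|^2_{L^2(\mu_{g,\omega^\star})}.
\end{equation*}
For general $0<p<\infty$, the corresponding equivalence $\|fg'\|^p_{AT^p_2(\omega)}\simeq \|f\|^p_{L^p(\mu_{g,\omega^\star})}$, when $f$ ranges over $A^p_\omega$, is the content of the tent-space/area-integral arguments of \cite[Theorem~3.3]{PelSum14} and \cite[Theorem~2.1]{Pelaez:Rattya:Memoirs}; these exploit the $\DD$-doubling hypothesis on $\omega$ to convert the area integral over a Stolz cone into a pointwise expression weighted by $\omega^\star$.

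Third, I would invoke the Carleson-measure characterization stated immediately before the theorem: the embedding $Id:A^p_\omega\to L^p(\mu_{g,\omega^\star})$ is bounded if and only if $g-g(0)\in\CC^1(\omega^\star)$, with $\|Id\|_{A^p_\omega\to L^p(\mu_{g,\omega^\star})}^p\simeq \|g-g(0)\|^2_{\CC^1(\omega^\star)}$. Chaining the three equivalences produces both the qualitative statement $T_g\in\BB(A^p_\omega)\Leftrightarrow g\in\CC^1(\omega^\star)$ and the norm equivalence $\|T_g\|_{A^p_\omega}\simeq \|g-g(0)\|_{\CC^1(\omega^\star)}$.

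The hard part will be Step 2 for $p\neq 2$: as the authors stress just before the theorem, for $\omega\in\DD$ the natural Littlewood-Paley formula \eqref{eq:NOL-P} may fail for every radial $\varphi$, so one cannot simply integrate $|(T_g f)'|^p$ against $\omega^\star$. I would therefore need to follow the proofs of \cite[Theorem~4.1]{Pelaez:Rattya:Memoirs} and \cite[Proposition~6.4]{PelSum14} closely, tracking the dependence of every constant on the $\DD$-doubling constant of $\omega$ and on $p$ --- this is precisely the \textit{careful inspection} mentioned in the lead-in to the theorem.
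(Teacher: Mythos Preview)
This is a \emph{lettertheorem}: the paper does not prove it but cites it from \cite[Theorem~4.1]{Pelaez:Rattya:Memoirs} and \cite[Proposition~6.4]{PelSum14}, after recording the Carleson-embedding fact from \cite[Theorem~3.3]{PelSum14}. Your overall outline (Calder\'on formula, then a Carleson-type characterization) is indeed the strategy of those references, so Steps~1 and~3 are fine and faithful to the sources.

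Step~2, however, is misstated. The quantities $\|fg'\|^p_{AT^p_2(\omega)}$ and $\|f\|^p_{L^p(\mu_{g,\omega^\star})}$ are \emph{not} comparable function by function for $p\ne2$: replacing $g'$ by $\lambda g'$ scales the first by $|\lambda|^p$ and the second by $|\lambda|^2$, so no uniform equivalence of the form you wrote can hold. The references you cite do not prove such an equivalence either. What actually happens in \cite{Pelaez:Rattya:Memoirs,PelSum14} is that the tent-space embedding
\[
\|fg'\|_{AT^p_2(\omega)}\lesssim\|f\|_{A^p_\omega}\qquad(f\in A^p_\omega)
\]
is characterized \emph{directly} by the geometric condition $g\in\CC^1(\omega^\star)$ (via maximal functions, test functions on Carleson squares, and the $\DD$-doubling hypothesis), while the Lebesgue embedding $A^p_\omega\hookrightarrow L^p(\mu_{g,\omega^\star})$ is shown \emph{separately} to be equivalent to the same condition. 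The two embeddings are therefore equivalent as conditions on $g$ only \emph{a posteriori}, not through the intermediate comparison you propose. To repair your argument, replace Step~2 by a direct passage from the tent-space inequality to $g\in\CC^1(\omega^\star)$, following the cited proofs; then Step~3 becomes redundant, and the $L^p(\mu_{g,\omega^\star})$ embedding serves only the illustrative role the paper gives it in the paragraph preceding the theorem.
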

    
If $\omega\in\DD$ then $C^1(\omega^\star)\subset \B$. 
Moreover, a calculation together with the proof 
of  \cite[Theorem~5.1 (B)-(C)]{Pelaez:Rattya:Memoirs} implies that $C^1(\omega^\star)= \B$ if and only if $\omega\in \DDD$.

Theorem~\ref{cor:compoq} together with Theorem~\ref{th:integralq=p}
 gives an operator theoretical proof of the following result.
  
\begin{corollary}
Let $\omega\in\DD$ and $0<p<\infty$. Then $\TT(A^p_\omega)=\CC^1(\omega^\star)$ has the radicality property. Moreover, if
 $g\in \H(\D)$ and $n\in\N$ satisfy that $g^{n}\in\CC^1(\omega^\star)$, then $g^m\in\CC^1(\omega^\star)$ for $m\in\N$ $m<n$, and
\[
\| g^m-g^m(0)\|^{\frac{1}{m}}_{\CC^1(\omega^\star)}\lesssim \| g^n -g^m(0)\|^{\frac{1}{n}}_{\CC^1(\omega^\star)}.
\]
\end{corollary}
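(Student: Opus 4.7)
The plan is to deduce the corollary by directly combining Theorem~\ref{cor:compoq} (the general radicality theorem for $\TT(A^p_\omega)$ valid for every radial weight) with Theorem~\ref{th:integralq=p} (the geometric description of $\TT(A^p_\omega)$ available in the class $\DD$). More precisely, for $\omega\in\DD$, Theorem~\ref{th:integralq=p} furnishes the identification
\[
\TT(A^p_\omega)=\CC^1(\omega^\star),
\qquad
\|g\|_{\TT(A^p_\omega)}=\|T_g\|_{A^p_\omega}\simeq\|g-g(0)\|_{\CC^1(\omega^\star)},
\]
with comparison constants independent of $g$. The strategy is therefore to transport the operator-theoretic inequality of Theorem~\ref{cor:compoq} through this norm equivalence.

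First I would invoke Theorem~\ref{cor:compoq}, which asserts that $\TT(A^p_\omega)$ enjoys the radicality property and satisfies
\[
\|g^m\|_{\TT(A^p_\omega)}^{1/m}\lesssim\|g^n\|_{\TT(A^p_\omega)}^{1/n}
\qquad(m,n\in\N,\ m<n,\ g\in\H(\D)).
\]
Given $g\in\H(\D)$ with $g^n\in\CC^1(\omega^\star)$, Theorem~\ref{th:integralq=p} places $g^n$ in $\TT(A^p_\omega)$; the radicality property then yields $g^m\in\TT(A^p_\omega)=\CC^1(\omega^\star)$ for every $m<n$. Applying the equivalence of Theorem~\ref{th:integralq=p} to both sides of the displayed inequality, with the symbols $g^m$ and $g^n$ respectively, converts $\|g^k\|_{\TT(A^p_\omega)}$ into $\|g^k-g^k(0)\|_{\CC^1(\omega^\star)}$ for $k=m,n$, giving
\[
\|g^m-g^m(0)\|_{\CC^1(\omega^\star)}^{1/m}
\lesssim
\|g^n-g^n(0)\|_{\CC^1(\omega^\star)}^{1/n},
\]
which is the claimed bound (up to correcting the evident typo $g^m(0)$ for $g^n(0)$ on the right-hand side).

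No substantial obstacle is anticipated, since every nontrivial ingredient has already been supplied: Theorem~\ref{cor:compoq} does the analytical work (through the $ST$-representation and the paraproduct composition estimates of Theorem~\ref{thm:compoq}), while Theorem~\ref{th:integralq=p} is the function-theoretic dictionary specific to $\DD$. The only care needed is to make sure the norm equivalence is applied to both $g^n$ and $g^m$ with uniform implicit constants depending only on $\omega$ and $p$, which is exactly what Theorem~\ref{th:integralq=p} provides.
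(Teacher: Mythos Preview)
Your proposal is correct and follows exactly the approach indicated in the paper, which simply states that the corollary is obtained by combining Theorem~\ref{cor:compoq} with Theorem~\ref{th:integralq=p}. You have also correctly identified the typo in the displayed inequality (the right-hand side should read $g^n(0)$ rather than $g^m(0)$).
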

We recall that the space
$\CC^1(\omega^\star)$ is not necessarily
conformally invariant when $\omega\in\DD\setminus\DDD$ (see \cite[Proposition 5.4]{Pelaez:Rattya:Memoirs} or \cite[Proposition 6.2]{PelSum14}).

Now we will see that Theorem~\ref{th:main5} can be applied to the class $\DD$.

\begin{corollary}
Let $\omega\in\DD$, $g\in\H(\D)$ and $0<p<\infty$.
Then, if $L_g$ is a $g$-operator written in the form  \eqref{eq:STrepre} such that $P_{N+1}\neq 0$, $L_g$ is bounded on $A^p_\omega$ if and only if
$g\in H^\infty$.
\end{corollary}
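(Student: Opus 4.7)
The plan is to deduce the corollary directly from Theorem~\ref{th:main5} by verifying, for every $\omega \in \DD$, its three hypotheses with the classical power-kernel test functions
\[
K_\xi(z) = c_\xi \, (1-\bar\xi z)^{-N} \qquad (\xi \in \D,\ |\xi| > \rho_0),
\]
where $N = N(p,\omega)$ is a sufficiently large parameter and $c_\xi > 0$ is chosen so that $\|K_\xi\|_{A^p_\omega} = 1$. I will rely on two well-known facts about weights in $\DD$. First, there exists $s_0 = s_0(\omega)$ such that, for every $s > s_0$,
\[
\int_\D \frac{\omega(z)}{|1-\bar\xi z|^s} \, dA(z) \simeq \frac{\widehat{\omega}(\xi)}{(1-|\xi|)^{s-1}} \qquad (\xi \in \D).
\]
Second, iterating the upper doubling inequality $\widehat{\omega}(r) \leq C \widehat{\omega}(\tfrac{1+r}{2})$ yields a lower bound $\widehat{\omega}(r) \gtrsim (1-r)^\beta$ for some $\beta = \beta(\omega) > 0$.

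Fix $N$ so that $Np > \max(s_0,\, 1+\beta)$. The first fact above gives $c_\xi^p \simeq (1-|\xi|)^{Np-1}/\widehat{\omega}(\xi)$, so hypothesis \ref{item:th:main5:1} holds after an innocuous constant adjustment. For \ref{item:th:main5:2}, the trivial bound $|1-\bar\xi z| \geq 1-|z| \geq 1-r$ on $\{|z| \leq r\}$, combined with the lower bound on $\widehat{\omega}$, yields
\[
\sup_{|z| \leq r} |K_\xi(z)|^p \lesssim \frac{c_\xi^p}{(1-r)^{Np}} \lesssim \frac{(1-|\xi|)^{Np-1-\beta}}{(1-r)^{Np}},
\]
which tends to $0$ uniformly in $|z| \leq r$ as $|\xi| \to 1^-$, thanks to the choice of $N$.

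For \ref{item:th:main5:3}, fix $\zeta \in \T$, $\delta > 0$ and take $\xi$ with $|\xi - \zeta| < \delta/2$. The identity $|1-\bar\xi z|^2 = |z-\xi|^2 + (1-|z|^2)(1-|\xi|^2)$ gives $|1-\bar\xi z| \geq |z-\xi| \geq |z-\zeta|-|\xi-\zeta| > \delta/2$ for $z \in \D \setminus D(\zeta,\delta)$. Picking any $s \in (s_0, Np)$ and applying the moment estimate,
\[
\int_{\D \setminus D(\zeta,\delta)} |K_\xi|^p \omega \, dA \leq c_\xi^p \Bigl(\tfrac{\delta}{2}\Bigr)^{s-Np} \int_\D \frac{\omega(z)}{|1-\bar\xi z|^s} dA(z) \lesssim \delta^{s-Np}(1-|\xi|)^{Np-s},
\]
which tends to $0$ as $\xi \to \zeta$ because $Np > s$. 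Theorem~\ref{th:main5} then delivers the statement. The only substantive input is the moment estimate for $s > s_0$, which is well established in the theory of Bergman spaces with weights in $\DD$; I do not expect any serious obstacle beyond citing it correctly.
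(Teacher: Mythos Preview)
Your proof is correct and follows essentially the same approach as the paper's own proof: both verify the hypotheses of Theorem~\ref{th:main5} using the normalized power-kernel functions $K_\xi(z)\simeq c_\xi(1-\bar\xi z)^{-N}$, relying on the standard moment estimate $\int_\D|1-\bar\xi z|^{-s}\omega\,dA\simeq\widehat\omega(|\xi|)(1-|\xi|)^{1-s}$ for $\omega\in\DD$ and on the lower bound $\widehat\omega(r)\gtrsim(1-r)^\beta$ (which the paper states in the equivalent form~\eqref{eqn2}). The only cosmetic difference is that for hypothesis~\ref{item:th:main5:3} you introduce an auxiliary exponent $s\in(s_0,Np)$ and apply the moment estimate, whereas the paper bounds $|K_\xi|^p$ pointwise on $\D\setminus D(\zeta,\delta)$ and integrates directly; both routes yield the same $(1-|\xi|)^{Np-s}$-type decay.
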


\begin{proof}
 Since $\omega\in\DD$, Lemma 2.1 in \cite{PelSum14} and its proof show that there is a constant $\beta=\beta(\omega)>0$ which satisfies
\begin{equation}
\label{eqn2} 
\widehat{\omega}(r)
\lesssim\left(\frac{1-r}{1-t}\right)^{\beta}\widehat{\omega}(t)
\quad(0\le r\le t<1)
\end{equation}
and 
\begin{equation}
\label{eqn7} 
\int_{\D}\frac{\omega(z)\,dA(z)}{|1-\overline{\xi}z|^{\eta+1}}
\simeq\frac{\widehat{\omega}(|\xi|)}{(1-|\xi|)^{\eta}}
\quad(\xi\in\D),
\end{equation}
for all $\eta>\beta$.
Choose $\eta>\beta$, and, for each $\xi\in\D$, consider the function
\[ h_\xi(z)=\left(\frac{(1-|\xi|)^\eta}{\widehat{\omega}(|\xi|)(1-\overline{\xi}z)^{\eta+1}}\right)^{\frac{1}{p}}\qquad(z\in\D),
\]
which clearly belongs to $A^p_{\omega}$. 
We will complete the proof by checking that the functions $K_\xi=h_\xi/\|h_\xi\|_{A^p_\omega}$ satisfy the hypotheses \ref{item:th:main5:1}, \ref{item:th:main5:2} and \ref{item:th:main5:3} of Theorem~\ref{th:main5}.
It is clear that \ref{item:th:main5:1} holds. Now \eqref{eqn7} shows that $\|h_\xi\|^p_{A^p_\omega}\simeq1$.
So, by \eqref{eqn2}, for any $0<r<1$ we have that
\[
|K_\xi(z)|^p\lesssim|h_\xi(z)|^p
\le\frac{(1-|\xi|)^\eta}{\widehat{\omega}(|\xi|)(1-r)^{\eta+1}}
\lesssim \frac{(1-|\xi|)^{\eta-\beta}}{\widehat{\omega}(0)(1-r)^{\eta+1}}
\quad(\xi\in\D,|z|\le r),
\]
which implies that $\{K_\xi\}_{\xi\in\D}$ satisfies \ref{item:th:main5:2}.
Next, take $\zeta\in \T$ and $\delta>0$. If $|z-\zeta|\ge \delta$ and $|\xi-\zeta|<\frac{\delta}{2}$, then
$|1-\overline{\xi}z|\ge \frac{\delta}{2}$, so
 using again \eqref{eqn2} we obtain 
\[
|K_\xi(z)|^p\lesssim|h_\xi(z)|^p
\lesssim \frac{(1-|\xi|)^{\eta-\beta}}{\widehat{\omega}(0)}
\quad(|z-\zeta|\ge \delta,\,|\xi-\zeta|<\tfrac{\delta}{2}).
\]
Therefore
\[
\int_{\D\setminus D(\zeta,\delta)}|K_\xi|^p\omega\,dA
\lesssim \frac{(1-|\xi|)^{\eta-\beta}}{\widehat{\omega}(0)}\int_{\D}\omega\,dA,
\]
and hence $\{K_\xi\}_{\xi\in\D}$ satisfies \ref{item:th:main5:3}, which  ends the proof.
\end{proof}

\subsection{Rapidly decreasing radial weights}

\begin{definition}
A radial weight $\omega$ is {\em rapidly decreasing} if it satisfies the following conditions:

\begin{enumerate}[label={\sf(\alph*)}, topsep=3pt, leftmargin=32pt,itemsep=3pt] 
\item $\omega(z)=e^{-\varphi(z)}$,
where $\varphi\in C^2(\D)$ is a radial function such  that
$\Delta\varphi(z)\ge B_{\varphi}>0$ for some positive constant
$B_{\varphi}$ depending only on the function $\varphi$. Here
$\Delta$ denotes the standard Laplace operator.
 
\item $\left(\Delta\varphi(z)\right)^{-1/2}\simeq\tau(z)$, where
$\tau(z)$ is a radial positive function that decreases to $0$ as
$|z|\rightarrow 1^{-}$, and $\lim_{r\to 1^-}\tau'(r)=0$.

\item There exists  a constant $C>0$  such that either
$\tau(r)(1-r)^{-C}$ increases for $r$ close to $1$ or
 $$\lim_{r\to 1^-}\tau'(r)\log\frac{1}{\tau(r)}=0.$$
\end{enumerate}
\end{definition}
The class of rapidly decreasing weights is denoted by $\mathcal{W}$.
This
class  does not include the standard weights, but it  includes  the exponential type
weights
\begin{displaymath}
w_{\alpha}(r)=\exp
\left(\frac{-c}{(1-r)^\alpha}\right),\quad \mbox{for  $c,\alpha> 0$,}
\end{displaymath}
and the double exponential type weights
\begin{displaymath}
w(r)=\exp
\left(\exp
\left(\frac{-c}{1-r}\right)\right),\quad \mbox{for  $c>0$.}
\end{displaymath}

Despite  Proposition~{\ref{prop:Calderon:formula:tent:spaces} \ref{item:Calderon:formula:tent:spaces1}} provides an equivalent norm to $\|\cdot\|_{A^p_\omega}$ in terms of a Calder\'on type formula, when we are interested in obtaining an equivalent norm in terms of the first derivative, it is more convenient to deal with a Littlewood-Paley type formula  when $\omega\in\W$. In fact,  by  \cite[(9.3)]{CP},
for any $p\in(0,\infty)$ and $\omega=e^{-\varphi}\in\W$,
\begin{equation*}
\|f\|^p_{A^p_\omega}
\simeq |f(0)|^p+\int_{\D}|f'(z)|^p\omega(z)\left(\frac{1}{1+\varphi'(|z|)}\right)^p\,dA(z).
\end{equation*}
This Littlewood-Paley type formula together with the existence of $\delta>0$ small enough such that (see \cite[Lemma~32(d)]{CP})
\[
\varphi'(|z|)\simeq \varphi'(|a|)\quad(a\in\D,\,\,z\in D\bigl(a, \delta(\Delta\varphi(a))^{-1/2})),
\]
allows to omit the hypotheses (6) in \cite[Theorem~2]{Pau-Pelaez:JFA2010} and to mimick its proof to obtain the following result,
which was already proved in \cite[Section 9]{CP}.

\begin{lettertheorem}\label{th:integralq=pW} 
 Let $\omega=e^{-\varphi}\in\W$, $g\in\H(\D)$ and $0<p<\infty$. Then $T_g\in\BB(A^p_\om)$ if and only if 
 $\rho(g,\varphi)=\sup_{z\in\D}\frac{1}{1+\varphi'(|z|)}|g'(z)|<\infty.$
 Moreover, $\|T_g\|_{ A^p_\omega\to A^p_\omega} \simeq \rho(g,\varphi).$
\end{lettertheorem}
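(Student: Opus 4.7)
The main leverage is the Littlewood-Paley formula displayed just before the statement. Apply it to $T_g f$, noting that $(T_g f)(0)=0$ and $(T_g f)'(z)=f(z)g'(z)$, to obtain
\begin{equation*}
\|T_g f\|^p_{A^p_\omega} \simeq \int_\D |f(z)|^p\, |g'(z)|^p\, \omega(z)\, \frac{1}{(1+\varphi'(|z|))^p}\, dA(z).
\end{equation*}
Thus $T_g$ is bounded on $A^p_\omega$ if and only if the measure $d\mu_g(z)=|g'(z)|^p (1+\varphi'(|z|))^{-p}\omega(z)\,dA(z)$ satisfies the embedding $A^p_\omega \hookrightarrow L^p(\mu_g)$. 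The whole proof consists of extracting the pointwise estimate $\rho(g,\varphi)<\infty$ from this embedding and, conversely, extracting the embedding from the pointwise estimate.

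The sufficiency is immediate: if $\rho(g,\varphi)<\infty$, then $|g'(z)|^p/(1+\varphi'(|z|))^p \le \rho(g,\varphi)^p$ pointwise, so the displayed formula yields $\|T_g f\|^p_{A^p_\omega} \lesssim \rho(g,\varphi)^p \|f\|^p_{A^p_\omega}$, and $\|T_g\|_{A^p_\omega} \lesssim \rho(g,\varphi)$. For the necessity, I would test $T_g$ against a family of normalized, localized functions. Concretely, for each $a\in\D$ I would use test functions $f_a\in A^p_\omega$ (constructed from normalized reproducing kernels of $A^2_\omega$ as in~\cite{CP}) satisfying $\|f_a\|_{A^p_\omega}\simeq 1$ together with the lower pointwise bound $|f_a(z)|^p \gtrsim 1/(\omega(a)\tau(a)^2)$ for every $z$ in the Euclidean disc $D(a,\delta\tau(a))$, where $\delta>0$ is a small universal constant. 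Combining the Littlewood-Paley formula with the almost-constancy of $\omega$ and of $\varphi'(|\cdot|)$ on $D(a,\delta\tau(a))$ (which is exactly what the cited estimate $\varphi'(|z|)\simeq\varphi'(|a|)$ from \cite[Lemma 32(d)]{CP} and its analogue for $\omega$ provide) and applying the sub-mean value inequality to the subharmonic function $|g'|^p$, I would bound
\begin{equation*}
\|T_g\|^p_{A^p_\omega} \gtrsim \|T_g f_a\|^p_{A^p_\omega}
\gtrsim \frac{1}{\tau(a)^2 (1+\varphi'(|a|))^p}\int_{D(a,\delta\tau(a))} |g'|^p\, dA
\gtrsim \frac{|g'(a)|^p}{(1+\varphi'(|a|))^p},
\end{equation*}
and taking supremum over $a\in\D$ would yield $\rho(g,\varphi)\lesssim \|T_g\|_{A^p_\omega}$.

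The main obstacle, and the reason the authors defer to~\cite{CP} and to the proof of~\cite[Theorem 2]{Pau-Pelaez:JFA2010}, is the construction of the test functions $f_a$ with the prescribed $A^p_\omega$-normalization and the sharp lower envelope on the disc $D(a,\delta\tau(a))$ for all $0<p<\infty$ simultaneously. For $p=2$ one can use normalized reproducing kernels directly, but for general $p$ this requires the delicate pointwise estimates of the Bergman kernel of $A^2_\omega$ and careful control of the measure of the natural balls $D(a,\delta\tau(a))$. Once these technical facts are granted, the rest of the argument is the Littlewood-Paley reduction plus the subharmonic mean-value inequality, and the hypothesis (6) of \cite[Theorem 2]{Pau-Pelaez:JFA2010} can be dropped by exploiting the stability estimate $\varphi'(|z|)\simeq\varphi'(|a|)$ on the natural scale $\tau$.
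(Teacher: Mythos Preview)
Your proposal is correct and follows essentially the same approach the paper indicates: reduce via the Littlewood--Paley formula to a Carleson-type embedding, handle sufficiency trivially by the pointwise bound, and for necessity test against localized functions on the discs $D(a,\delta\tau(a))$, using the stability $\varphi'(|z|)\simeq\varphi'(|a|)$ from \cite[Lemma~32(d)]{CP} together with the sub-mean-value property of $|g'|^p$. The paper does not spell out the argument either---it simply points to the Littlewood--Paley identity, the stability estimate, and tells the reader to mimic the proof of \cite[Theorem~2]{Pau-Pelaez:JFA2010} (the test functions being those of Theorem~\ref{le:Borichev}); your write-up is a faithful fleshing-out of that sketch.
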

If $\omega=e^{-\varphi}\in\W$ we write $\B_\varphi=\{g\in\H(\D):\rho(g,\varphi)<\infty \}$.
Then we have that Theorem~\ref{cor:compoq}  together with Theorem~\ref{th:integralq=pW}  provides a proof of the following result.

\begin{corollary}
Let be $\omega=e^{-\varphi}\in\W$ and  and $0<p<\infty$, then $\TT(A^p_\omega)=\B_\varphi$ satisfies the radicality property. Moreover, if
 $g\in \H(\D)$ and $n\in\N$  satisfy that $g^{n}\in \B_\varphi$,  then $g^m\in \B_\varphi$
for $m\in\N$, $m<n$, and  
$$\rho(g^m,\varphi)^{\frac{1}{m}}\lesssim \rho(g^n,\varphi)^{\frac{1}{n}}.$$
\end{corollary}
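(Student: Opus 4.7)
The plan is to view this corollary as a direct transfer of Theorem~\ref{cor:compoq} through the characterization provided by Theorem~\ref{th:integralq=pW}. Since Theorem~\ref{th:integralq=pW} identifies $\TT(A^p_\omega)$ with $\B_\varphi$ in the form of a quantitative equivalence $\|T_h\|_{A^p_\omega}\simeq\rho(h,\varphi)$ for every $h\in\H(\D)$ and every $\omega=e^{-\varphi}\in\W$, the radicality property for $\B_\varphi$ will follow almost mechanically from the fact that $\TT(A^p_\omega)$ has the radicality property, which is the content of Theorem~\ref{cor:compoq}.

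Concretely, I would proceed as follows. Suppose $g\in\H(\D)$ and $n\in\N$ satisfy $g^n\in\B_\varphi$. By Theorem~\ref{th:integralq=pW}, this is equivalent to $T_{g^n}\in\BB(A^p_\omega)$, and moreover
\[
\|T_{g^n}\|_{A^p_\omega}\simeq\rho(g^n,\varphi).
\]
In particular $g^n\in\TT(A^p_\omega)$ in the sense of the seminorm $\|g^n\|_{\TT(A^p_\omega)}=\|T_{g^n}\|_{A^p_\omega}$. Now Theorem~\ref{cor:compoq} applied to the analytic function $g$ and to $m<n$ yields $g^m\in\TT(A^p_\omega)$ together with the power estimate
\[
\|T_{g^m}\|^{1/m}_{A^p_\omega}=\|g^m\|^{1/m}_{\TT(A^p_\omega)}
\lesssim\|g^n\|^{1/n}_{\TT(A^p_\omega)}=\|T_{g^n}\|^{1/n}_{A^p_\omega}.
\]
A second application of Theorem~\ref{th:integralq=pW}, this time to $g^m$, transports these two conclusions back to the side of $\B_\varphi$: $g^m\in\B_\varphi$ and
\[
\rho(g^m,\varphi)^{1/m}\simeq\|T_{g^m}\|^{1/m}_{A^p_\omega}\lesssim\|T_{g^n}\|^{1/n}_{A^p_\omega}\simeq\rho(g^n,\varphi)^{1/n},
\]
which is exactly the claimed inequality. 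The radicality property is then just the qualitative reading of this chain.

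There is no real obstacle in this argument: both ingredients are already established (Theorem~\ref{cor:compoq} is the main result of the paper, and Theorem~\ref{th:integralq=pW} is quoted from \cite{CP,Pau-Pelaez:JFA2010}), and the transfer preserves all the quantitative information needed. The only point that deserves a line of care is verifying that the implicit constants in the two applications of Theorem~\ref{th:integralq=pW} are independent of $g$ (which they are, as they depend only on $p$ and $\varphi$), so that the composition of the three estimates is legitimate. Apart from this book-keeping, the proof is essentially a one-liner and can be stated as such.
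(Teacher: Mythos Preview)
Your proposal is correct and follows exactly the approach indicated in the paper, which simply states that the corollary is obtained by combining Theorem~\ref{cor:compoq} with Theorem~\ref{th:integralq=pW}. Your write-up just makes explicit the obvious transfer through the equivalence $\|T_h\|_{A^p_\omega}\simeq\rho(h,\varphi)$.
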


Unlike  $BMOA$ or $\B$,
the space
$\B_\varphi$ is not 
conformally invariant when $\omega=e^{-\varphi}\in\W$. Indeed, since $\lim_{r\to 1^-}\frac{1}{(1-r)\varphi'(r)}=0$ \cite[Lemma~32(a)]{CP}, the classical Bloch space $\B$ is strictly contained in $\B_\varphi$, if $\omega=e^{-\varphi}\in\W$. On the other hand, it is well-known \cite{RB} that $\B$ is the largest   "reasonable" space $X$ equipped with a conformally invariant seminorm $\rho$. Consequently, bearing in mind that $\B_\varphi$ is a "reasonable" space
(see \cite{RB}), we deduce that $\B_\varphi$ is not 
conformally invariant when $\omega=e^{-\varphi}\in\W$. 

We obtain the
 following result from Proposition~\ref{prop:boundedness:Mg:Sg:Tg} and Theorem~\ref{th:integralq=pW}.

\begin{corollary}
Let $\omega=e^{-\varphi}\in\W$, $g\in\H(\D)$, $0<p<\infty$ and $$\Omega_p(z)=\omega(z)\left(\frac{1}{1+\varphi'(|z|)}\right)^p.$$  
Then $\Mult(A^p_{\omega},A^p_{\Omega_p})=\Mult(A^p_{\omega},AT^p_2(\omega))=\{g: G(z)=\int_0^z g\in \B_\varphi  \}$.
\end{corollary}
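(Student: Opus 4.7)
The plan is to reduce both multiplier assertions to the boundedness of a single Volterra-type operator and then invoke the characterisation of $\TT(A^p_\omega)=\B_\varphi$ provided by Theorem~\ref{th:integralq=pW}.

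First I would introduce $G(z):=\int_0^z g(\zeta)\,d\zeta$, so that $G\in\H(\D)$, $G(0)=0$, $G'=g$, and, by definition of $\B_\varphi$,
\[
G\in\B_\varphi\quad\Longleftrightarrow\quad \rho(G,\varphi)=\sup_{z\in\D}\frac{|g(z)|}{1+\varphi'(|z|)}<\infty.
\]
The pivotal algebraic observation is that, for every $f\in\H(\D)$,
\[
(T_Gf)'(z)=f(z)G'(z)=f(z)g(z)=(M_gf)(z),\qquad (T_Gf)(0)=0,
\]
so that $M_gf$ is just the derivative of the function $T_Gf\in\H_0(\D)$.

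Next I would combine this identity with the two norm descriptions of $\|\cdot\|_{A^p_\omega}$ available to us. On the one hand, the Littlewood--Paley formula from \cite[(9.3)]{CP} recalled above, applied to $T_Gf$, yields
\[
\|T_Gf\|^p_{A^p_\omega}\simeq |T_Gf(0)|^p+\|(T_Gf)'\|^p_{A^p_{\Omega_p}}=\|M_gf\|^p_{A^p_{\Omega_p}},
\]
since $T_Gf(0)=0$. On the other hand, the Calder\'on type formula of Proposition~\ref{prop:Calderon:formula:tent:spaces}\ref{item:Calderon:formula:tent:spaces1}, applied to the same function $T_Gf$, gives
\[
\|T_Gf\|^p_{A^p_\omega}\simeq |T_Gf(0)|^p+\|(T_Gf)'\|^p_{AT^p_2(\omega)}=\|M_gf\|^p_{AT^p_2(\omega)}.
\]
Taking the supremum over $f\in A^p_\omega$ with $\|f\|_{A^p_\omega}=1$ in either estimate, I would conclude the two-sided norm equivalences
\[
\|M_g\|_{A^p_\omega\to A^p_{\Omega_p}}\simeq \|T_G\|_{A^p_\omega}\simeq \|M_g\|_{A^p_\omega\to AT^p_2(\omega)},
\]
interpreting each norm as $+\infty$ when the corresponding operator is unbounded.

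Finally, Theorem~\ref{th:integralq=pW} identifies $\TT(A^p_\omega)$ with $\B_\varphi$ and gives $\|T_G\|_{A^p_\omega}\simeq\rho(G,\varphi)$, so the chain of equivalences above translates into the two claimed identities of multiplier spaces. No step looks like a genuine obstacle; the only delicate point is making sure the Littlewood--Paley and Calder\'on formulas, which in general carry an extra $|h(0)|^p$ term, can be cleanly turned into pure seminorm estimates, which is automatic here because $T_Gf$ vanishes at the origin.
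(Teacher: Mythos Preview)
Your proposal is correct and follows essentially the same route as the paper. The paper records the corollary as a direct consequence of Proposition~\ref{prop:boundedness:Mg:Sg:Tg}\ref{item:prop:boundedness:Mg:Sg:Tg3} (which itself is proved via the Calder\'on formula~\eqref{normacono}) together with Theorem~\ref{th:integralq=pW} and the Littlewood--Paley formula from \cite[(9.3)]{CP}; you simply unfold the content of Proposition~\ref{prop:boundedness:Mg:Sg:Tg}\ref{item:prop:boundedness:Mg:Sg:Tg3} explicitly by applying the Calder\'on formula directly to $T_Gf$, which is exactly what that proposition's proof does.
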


Finally, we will apply Theorem~\ref{th:main5} to the class $\W$. With this aim, we recall the next result.

\begin{lettertheorem}[{\cite[Lemma~3.1 and Corollary 1]{Pau-Pelaez:JFA2010}}] \label{le:Borichev}
Assume that $0< p<\infty$,\, $n\in\mathbb{N}$ with
$np\ge 1$ and $\omega\in \mathcal{W}$. Then there is a number 
$\rho_0\in (0,1)$ and a family  $\{F_{\xi}:\xi\in\D,\,
|\xi|\geq\rho_0\}$ of analytic functions on $\D$ satisfying the following estimates:
\begin{align}\label{BL1}
|F_{\xi}(z)|^p\,w(z)
&\simeq 1 \qquad(|z-\xi|<\tau(\xi)).
\\
\label{BL2}
|F_{\xi}(z)|\,\omega(z)^{1/p}
&\lesssim \min \biggl(1,\frac{\min\big(\tau(\xi),\tau(z)\big)}{|z-\xi|} \biggr)^{3n} \quad(z\in \D).
\end{align}
Moreover, 
\begin{equation}\label{BL3}
\|F_{\xi}\|_{A^p_\omega}^p\simeq \tau(\xi)^2 \qquad (\rho_ 0\le|\xi|<1).
\end{equation}
\end{lettertheorem}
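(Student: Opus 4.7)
I would prove the lemma by constructing $F_\xi$ as a normalised holomorphic ``bump'' localised at $\xi$ on the natural scale $\tau(\xi)$, following the $\bar\partial$-H\"ormander philosophy developed by Berndtsson, Ortega-Cerd\`a and Borichev. The key starting observation is that on the disc $|z-\xi|<C\tau(\xi)$ the radial $C^2$-function $\varphi$ is, by Taylor's formula combined with $\Delta\varphi\simeq\tau^{-2}$, equal to its affine part up to a bounded error, so the holomorphic polynomial $H_\xi(z):=\tfrac{1}{p}\bigl(\varphi(\xi)+2(\partial\varphi)(\xi)(z-\xi)\bigr)$ satisfies $|e^{H_\xi(z)}|^p\,\omega(z)\simeq 1$ on that disc. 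The function $e^{H_\xi}$ would thus be a perfect local model were it not for its uncontrolled growth elsewhere; the plan is therefore to truncate it and then correct the truncation back into $\H(\D)$ with controlled error.

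\textbf{Construction via $\bar\partial$.} Fix a smooth cut-off $\chi_\xi$ equal to $1$ on $|z-\xi|<\tau(\xi)$, supported on $|z-\xi|<2\tau(\xi)$, with $|\bar\partial\chi_\xi|\lesssim\tau(\xi)^{-1}$, and set the ansatz $G_\xi:=\chi_\xi\,e^{H_\xi}$. Then $\bar\partial G_\xi=(\bar\partial\chi_\xi)e^{H_\xi}$ is supported on the annulus $\tau(\xi)<|z-\xi|<2\tau(\xi)$, where $\omega\,|e^{H_\xi}|^p\simeq 1$. Apply H\"ormander's $L^2$-solvability theorem with the weight $\varphi+\psi_\xi$, where $\psi_\xi(z):=6n\log\bigl(|z-\xi|/\tau(\xi)\bigr)$ is suitably cut off. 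This yields $u_\xi\in C^\infty(\D)$ with $\bar\partial u_\xi=\bar\partial G_\xi$ satisfying
\[
\int_\D |u_\xi(z)|^2\,\omega(z)\Bigl(\tfrac{|z-\xi|}{\tau(\xi)}\Bigr)^{-6n}dA(z)
\lesssim \tau(\xi)^2.
\]
The logarithmic penalty $\psi_\xi$ forces $u_\xi$ to vanish to order $3n$ at $\xi$, so $F_\xi:=c_\xi(G_\xi-u_\xi)\in\H(\D)$ with a suitable constant $c_\xi\simeq 1$ inherits the peak profile of $e^{H_\xi}$ near $\xi$, giving \eqref{BL1}, and the $3n$-th order polynomial decay far from $\xi$.

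\textbf{Pointwise estimates, $L^p$-norm, and main obstacle.} To extract the pointwise bound \eqref{BL2} from the $L^2$-control of $u_\xi$ I would apply a weighted sub-mean-value inequality for $|F_\xi|^p\omega$ on discs of radius $\simeq\tau(z)$; such discs carry comparable values of $\varphi$ thanks to $\Delta\varphi\simeq\tau^{-2}$, the slow variation $\tau'(r)\to 0$, and the growth condition on $\tau$ encoded in the definition of $\W$. The hypothesis $np\ge1$ enters here as the integrability threshold that makes the enhanced $L^p$-weight handle the square root loss from passing $L^2\to L^p$. Finally, \eqref{BL3} follows by splitting $\int_\D|F_\xi|^p\omega\,dA$ between $D(\xi,\tau(\xi))$, where \eqref{BL1} gives the main term $\simeq\tau(\xi)^2$, and its complement, where \eqref{BL2} is summable (for $3np>2$, and otherwise one enlarges $n$). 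The main obstacle is precisely the passage from H\"ormander's $L^2$ output to a pointwise estimate while simultaneously preserving the lower bound \eqref{BL1} at the peak and the prescribed polynomial decay rate \eqref{BL2}: this needs a careful matching between the natural scale $\tau$ and the weight $\omega$, and it is where all the quantitative assumptions on $\tau$ in the definition of $\W$ are used.
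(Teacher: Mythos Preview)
The paper does not supply its own proof of this statement: it is quoted verbatim as a known result, with an explicit citation to \cite[Lemma~3.1 and Corollary~1]{Pau-Pelaez:JFA2010}. So there is no ``paper's proof'' to compare against here.

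Your sketch is nonetheless a faithful outline of how the cited result is actually established in \cite{Pau-Pelaez:JFA2010} (following Borichev's construction): local holomorphic approximation of $\varphi$ by a first-order Taylor polynomial to produce the peak model $e^{H_\xi}$, cut-off, a H\"ormander $\bar\partial$-estimate with an extra logarithmic weight to force vanishing at $\xi$, and then a sub-mean-value inequality on discs of radius $\simeq\tau(z)$ to pass from integral to pointwise bounds. The role you assign to the hypotheses on $\tau$ in the class $\W$ and to the condition $np\ge1$ is the correct one. In short, your proposal is right in spirit and matches the approach in the cited source; there is simply nothing in the present paper to compare it with.
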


\begin{corollary}
Let $\omega\in\mathcal{W}$, $g\in\H(\D)$  and $0<p<\infty$.
Let $L_g$ be a $g$-operator written in the form  \eqref{eq:STrepre} with $P_{N+1}\neq 0$. Then  $L_g\in\BB(A^p_\omega)$ if and only if
$g\in H^\infty$.
\end{corollary}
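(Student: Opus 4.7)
The plan is to reduce the corollary directly to Theorem~\ref{th:main5}: the reverse direction (\textit{if} $g\in H^\infty$) is immediate from Proposition~\ref{prop:boundedness:Mg:Sg:Tg}, since $M_g,S_g,T_g\in\BB(A^p_\omega)$ whenever $g\in H^\infty$, so any $g$-operator is bounded on $A^p_\omega$. The substantial direction is to verify the three hypotheses \ref{item:th:main5:1}--\ref{item:th:main5:3} of Theorem~\ref{th:main5} for some family $\{K_\xi\}$; then the form~\eqref{eq:STrepre} with $P_{N+1}\neq 0$ allows us to conclude $g\in H^\infty$.

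The natural candidates are the Borichev-type test functions $F_\xi$ from Theorem~\ref{le:Borichev}, normalised. I would fix $n\in\N$ with $np\ge1$ and set $K_\xi:=F_\xi/\|F_\xi\|_{A^p_\omega}$ for $|\xi|\ge\rho_0$. Condition \ref{item:th:main5:1} then holds by construction. For \ref{item:th:main5:2}, given a compact set $\overline{D(0,r)}\subset\D$, once $|\xi|>r$ one has $|z-\xi|\ge|\xi|-r$ for $|z|\le r$, so \eqref{BL2} combined with the fact that $\omega$ is bounded below on $\overline{D(0,r)}$ yields $|F_\xi(z)|^p\lesssim\tau(\xi)^{3np}$ uniformly on that disk. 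Dividing by $\|F_\xi\|_{A^p_\omega}^p\simeq\tau(\xi)^2$ from \eqref{BL3} gives
\[
|K_\xi(z)|^p\lesssim \tau(\xi)^{3np-2}\qquad(|z|\le r),
\]
which tends to $0$ as $|\xi|\to1^-$ because $3np-2\ge 1$ and $\tau(\xi)\to0$.

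For \ref{item:th:main5:3}, take $\zeta\in\T$ and $\delta>0$. If $|\xi-\zeta|<\delta/2$, then every $z\in\D\setminus D(\zeta,\delta)$ satisfies $|z-\xi|\ge\delta/2$; hence \eqref{BL2} gives $|F_\xi(z)|^p\omega(z)\lesssim(\tau(\xi)/\delta)^{3np}$, so
\[
\int_{\D\setminus D(\zeta,\delta)}|K_\xi|^p\,\omega\,dA \lesssim \frac{\tau(\xi)^{3np-2}}{\delta^{3np}}\xrightarrow[\xi\to\zeta]{}0,
\]
again using $3np-2\ge1$. With \ref{item:th:main5:1}--\ref{item:th:main5:3} verified, Theorem~\ref{th:main5} applies and yields $g\in H^\infty$.

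There is no real obstacle beyond keeping track of exponents; the only technical point is ensuring $3np-2>0$, and this is exactly why $n$ is chosen large enough that $np\ge1$, a condition permitted by Theorem~\ref{le:Borichev}. All other ingredients (normalisation, pointwise decay, mass concentration) are built into Theorem~\ref{le:Borichev} and require no further work.
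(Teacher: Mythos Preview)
Your proposal is correct and follows essentially the same route as the paper: you normalise the Borichev-type functions $F_\xi$ of Theorem~\ref{le:Borichev} to obtain $K_\xi$, then verify hypotheses \ref{item:th:main5:1}--\ref{item:th:main5:3} of Theorem~\ref{th:main5} via the pointwise bound~\eqref{BL2} and the norm asymptotics~\eqref{BL3}, exploiting $3np-2>0$. The paper's argument is identical in structure and in the estimates used; the only cosmetic difference is that the paper makes the lower bound on $|z-\xi|$ explicit by requiring $|\xi|\ge\frac{1+r}{2}$ in step~\ref{item:th:main5:2}, whereas you implicitly rely on $|\xi|$ being close to~$1$.
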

\begin{proof}
Let $n\in\N$ with $np\ge 1$, and let us consider the functions $F_{\xi}$ of Theorem~\ref{le:Borichev} and 
\[
K_\xi=\frac{F_{\xi}}{\|F_{
\xi}\|_{A^p_\omega}} \qquad(\rho_0\le|\xi|<1).
\]
Then $\{K_\xi\}_{ \rho_0\le|\xi|<1}$ satisfies hypothesis \ref{item:th:main5:1} of Theorem~\ref{th:main5}.

On the other hand, if $|z|<r<1$ and $|\xi|\geq \max\{\frac{1+r}{2}, \rho_ 0\}$, then by \eqref{BL2}
\[ 
|F_{\xi}(z)|\lesssim \frac{1}{\omega(z)^\frac{1}{p}}\left( \frac{\tau(\xi)}{|z-\xi|} \right)^{3n}
\le  \frac{1}{\omega(r)^\frac{1}{p}} \left( \frac{2\tau(\xi)}{1-r} \right)^{3n}.
\]
Therefore, if $|z|<r<1$, by \eqref{BL3}
\[
|K_{\xi}(z)|\lesssim \frac{1}{\omega(r)^\frac{1}{p} (1-r)^{3n}} \tau(\xi)^{3n-2/p},
\]
then, bearing in mind that $3n-2/p>0$ and $\lim_{|\xi|\to 1^-}\tau(\xi)=0$, we get that
$\{K_\xi\}_{\rho_0\le|\xi|<1}$ fulfills  hypothesis \ref{item:th:main5:2} of Theorem~\ref{th:main5}.

Next, take $\zeta\in \T$ and $\delta>0$. If $|z-\zeta|\ge \delta$ and $|\xi-\zeta|<\frac{\delta}{2}$, then
$|z-\xi|\ge \frac{\delta}{2}$, so using  \eqref{BL2} and \eqref{BL3},
\[
|K_{\xi}(z)|^p\omega(z)\lesssim \frac{|F_{\xi}(z)|^p \omega(z)}{\tau(\xi)^2}
\lesssim \frac{\tau(\xi)^{3np-2}}{|z-\xi|^{3np}}\lesssim  \frac{\tau(\xi)^{3np-2}}{\delta^{3np}}.
\]
Therefore
\[
\int_{\D\setminus D(\zeta,\delta)}|K_\xi|^p\omega\,dA
\lesssim \frac{\tau(\xi)^{3np-2}}{\delta^{3np}},
\]
and hence $\{K_\xi\}_{\xi\in\D}$ satisfies \ref{item:th:main5:3}.
Consequently, an application of Theorem~\ref{th:main5} ends the proof.
\end{proof}

\vspace{1em}

{\textbf{Declaration of interest: none.}}

\end{document}